\newtheorem{proposition}{Proposition}[section]
  \newtheorem{theorem}[proposition]{Theorem}
  \newtheorem{corollary}[proposition]{Corollary}
  \newtheorem{lemma}[proposition]{Lemma}
\theoremstyle{definition}
  \newtheorem{definition}[proposition]{Definition}
  \newtheorem{remark}[proposition]{Remark}
   \newtheorem{notation}[proposition]{Notation}
  \newtheorem{example}[proposition]{Example}
\newcommand{\cst}{\ifmmode\mathrm{C}^*\else{$\mathrm{C}^*$}\fi}
\newcommand{\st}{\;\vline\;}
\newcommand{\CC}{\mathbb{C}}
\newcommand{\RR}{\mathbb{R}}
\newcommand{\GG}{\mathbb{G}}
\newcommand{\vtens}{\,\bar{\otimes}\,}
\newcommand{\id}{\mathrm{id}}
\newcommand{\I}{\mathds{1}}
\newcommand{\HH}{\mathbb{H}}
\newcommand{\sM}{\mathsf{M}}
\newcommand{\sN}{\mathsf{N}}
\newcommand{\sB}{\mathsf{B}}
\newcommand{\sD}{\mathsf{D}}
\newcommand{\sA}{\mathsf{A}}
\newcommand{\sX}{\mathsf{X}}
\newcommand{\hh}[1]{\widehat{#1}}
\newcommand{\flip}{\boldsymbol{\sigma}}
\newcommand{\ww}{\mathrm{W}}
\newcommand{\WW}{{\mathds{V}\!\!\text{\reflectbox{$\mathds{V}$}}}}
\newcommand{\Ww}{\mathds{W}}
\newcommand{\wW}{\text{\reflectbox{$\Ww$}}\:\!} 
\newcommand{\Vv}{\mathds{V}}
\newcommand{\vV}{\text{\reflectbox{$\Vv$}}\:\!}
\newcommand{\U}{\mathrm{U}}
\newcommand{\Uu}{\mathds{U}}
\newcommand{\dd}[1]{\widetilde{#1}}
\newcommand{\starr}{\,\underline{*}\,}
\newcommand{\staru}{\,\overline{*}\,}
\DeclareMathOperator{\C}{C}
\DeclareMathOperator{\B}{B}
\DeclareMathOperator{\Mor}{Mor}
\DeclareMathOperator{\M}{M}
\DeclareMathOperator{\N}{N}
\DeclareMathOperator{\Linf}{\mathnormal{L}^\infty\;\!\!}
\DeclareMathOperator{\Ltwo}{\mathnormal{L}^2\;\!\!}
\numberwithin{equation}{section}
\def\labelitemi{$\blacktriangleright$}
\author{Pawe{\l} Kasprzak}
\address{Department of Mathematical Methods in Physics, Faculty of Physics, University of Warsaw, Poland}
\email{pawel.kasprzak@fuw.edu.pl}
\author{Fatemeh Khosravi}
\address{Department of Pure Mathematics, Ferdowsi University of Mashhad, Iran}
\email{fa.khosravi@stu-mail.um.ac.ir}
\title[Coideals,  quantum subgroups and idempotent states]{Coideals,  quantum subgroups and idempotent states}
\subjclass[2010]{Primary: 46L65 Secondary: 43A05, 46L30, 60B15}
\keywords{Coideals, idempotent states, locally compact quantum group, quantum subgroups.}
\begin{document}

\begin{abstract} We establish a one to one correspondence between idempotent states on a locally compact quantum group $\GG$ and integrable coideals in the von Neumann algebra $\Linf(\GG)$ that are preserved by the scaling group. In particular we show that there is a  one to one correspondence between  idempotent states on $\GG$ and $\psi_\GG$-expected left-invariant von Neumann subalgebras of $\Linf(\GG)$. 
We characterize  idempotent states of Haar type as those corresponding to  integrable normal coideals preserved by the scaling group. We also establish a one to one correspondence between  open subgroups of $\GG$ and   central   idempotent states on the dual $\hh\GG$. Finally we characterize coideals corresponding to  open quantum subgroups of $\GG$ as those that are normal and  admit an atom. 
As a byproduct of this study we get a number of universal lifting results for Podle\'s condition, normality and regularity and we generalize a number of results known before to hold  under the coamenability assumption. 
\end{abstract}

\maketitle


\newlength{\sw}
\settowidth{\sw}{$\scriptstyle\sigma-\text{\rm{weak closure}}$}
\newlength{\nc}
\settowidth{\nc}{$\scriptstyle\text{\rm{norm closure}}$}
\newlength{\ssw}
\settowidth{\ssw}{$\scriptscriptstyle\sigma-\text{\rm{weak closure}}$}
\newlength{\snc}
\settowidth{\snc}{$\scriptscriptstyle\text{\rm{norm closure}}$}
\renewcommand{\labelitemi}{$\bullet$}

\section{Introduction}\label{intro}
Locally compact quantum groups theory  is formulated in terms of operator algebras. The system of axioms becomes particularly simple when written in the language of von Neumann algebras \cite{KVvN}. In this case a locally compact quantum group is given by a von Neumann algebra equipped with a comultiplication and a pair of (left and right invariant) weights. Given a von Neumann   quantum group, its $\C^*$-algebraic version, which fits    the  $\C^*$-system of axioms as formulated in \cite{KV} and \cite{MNW}, may be recovered. Conversely, a $\C^*$-quantum group yields a von Neumann version, making $\C^*$ and von Neumann  approaches equivalent. 

Yet another face of a locally compact quantum group is given by its universal $\C^*$-counterpart \cite{univ} which is directly linked with representation theory of the dual locally compact quantum group. The duality here extends the famous Pontryagin duality discovered in the context of abelian locally compact  groups. In what follows a quantum group will be denoted by $\GG$, its von Neumann algebra by $\Linf(\GG)$, its reduced $\C^*$-algebra by $\C_0(\GG)$ and the universal $\C^*$-algebra by $\C_0^u(\GG)$. 

A   locally compact quantum group can be studied through its representation theory and its actions on $\C^*$-algebras and von Neumann algebras. A distinguished  class of actions is given by taking the quantum quotient  of a locally compact quantum group $\GG$  by its closed quantum subgroup $\HH$. This class  was considered in \cite{Vaes}; it is worth mentioning that   the  von Neumann quotient $\Linf(\GG/\HH)$ can always be easily formed     whereas the existence of the $\C^*$-quotient $\C_0(\GG/\HH)$ is more subtle issue and in general it was proved    under the regularity assumption on $\GG$. If $\HH\subset \GG$ is compact $\C_0(\GG/\HH)$ can always be formed, \cite{embed}. 

In this paper we  study  the actions of locally compact quantum groups  that correspond to idempotent states  (see \cite{SaS}). The latter can be viewed as a generalization of the quantum quotient by a compact quantum subgroup. In particular, an idempotent state $\omega$ on $\GG$ gives rise to a von Neumann coideal $\sN\subset\Linf(\GG)$ which in the subgroup case is the quotient $\Linf(\GG/\HH)$ by a compact quantum subgroups  $\HH\subset \GG$.  We give a von Neumann characterization of $\sN\subset \Linf(\GG)$ corresponding to an idempotent state in terms of the integrability of the $\GG$-action on $\sN$.  We also extend  beyond the coamenable case the characterization of $\C^*$-subalgebras  $\sX\subset \C^u_0(\GG)$ corresponding to compact quantum subgroups   $\HH\subset \GG$ by taking the quotient $\sX  = \C^u_0(\GG/\HH)$ (see \cite{Sal}) and we formulate the von Neumann counterpart of this result. Finally we characterize subalgebras $\sN\subset \Linf(\GG)$ which are of the form $\Linf(\GG/\HH)$ with  $\HH\subset \GG$ being an open quantum subgroup. 
As a byproduct of study   we get a number of universal lifting results for Podle\'s condition, normality and regularity and we generalize a number of results known before to hold  under the coamenability assumption. 

The paper is written as follows. In Section \ref{Prel}  we introduce main definitions and we fix the notation. In Section \ref{lifsec}  we lift some results   that hold for  regular quantum groups from the reduced level to the universal level. As an application we describe  the universal  lift $\C_0^u(\GG/\HH)$ of the $\C^*$-quotient $\C_0(\GG/\HH)$ for a closed quantum subgroup $\HH\subset \GG$. In  Section \ref{integcoid}  a 1-1 correspondence between idempotent states on a locally compact quantum group $\GG$ and integrable coideals in the von Neumann algebra $\Linf(\GG)$ that are preserved by the scaling group is established. Using this result we were able to weaken the assumptions of \cite[Theorem 1]{SaS} and show that  there is a   1-1 correspondence between  idempotent states on $\GG$ and $\psi_\GG$-expected left-invariant von Neumann subalgebras of $\Linf(\GG)$ (see Remark \ref{remphiexp}). 
Section \ref{salres}  is divided into two parts. The first one is the characterization of $\C^*$-subalgebras   $\sX\subset \C_0^u(\GG)$  which are of the form $\C^u_0(\GG/\HH)$  where  $\HH\subset\GG$ is a  compact quantum subgroup. Our techniques are  very similar to those developed in \cite{Sal}  but we were able to drop the  coamenability assumption. In the second part of Section \ref{salres} we characterize von Neumann subalgerbas   $\sN\subset\Linf(\GG)$ which are of the form $\sN = \Linf(\GG/\HH)$ still with $\HH\subset\GG$ being a compact quantum subgroup. In Section  \ref{openidstates}
we establish a 1-1 correspondence between  open quantum subgroups of $\GG$ and   central idempotent states on $\hh\GG$. 
In Section \ref{opencoid}  we characterize   coideal subalgebras  $\sN\subset \Linf(\GG)$ which are of the form   $\sN = \Linf(\GG/\HH)$ for an open quantum subgroup $\HH\subset \GG$.
In the Appendix  we  extend beyond the coamenable case  the result proved in  \cite{DFSW}, stating that a  closed quantum subgroups $\HH\subset \GG$ has a Haagerup property if   $\GG$ has it.
 
\section{Preliminaries}\label{Prel}
We will denote the minimal  tensor product of $\C^*$-algebras with the symbol $\otimes$. The ultraweak tensor product of von Neumann algebras  will be denoted by $\vtens$. For a $\C^*$-subalgebra $\sB$ of a $\C^*$-algebra the multipliers  $\M(\sA)$ of $\sA$, the closed linear span of the set $\big\{ba \st b\in\sB, a\in\sA \big\}$ will be denoted by $\sB\sA$. A morphism  between two $\C^*$-algebras $\sA$ and $\sB$ is a $*$-homomorphism $\pi$ from  $\sA$ into the multiplier algebra $\M(\sB)$,  which is  non-degenerate, i.e $\pi(\sA)\sB = \sB$. We will denote the set of all morphisms from $\sA$ to $\sB$ by $\Mor(\sA,\sB)$. The non-degeneracy of a morphism $\pi$ yields its  natural extension to the unital $*$-homomorphism $\M(\sA)\to \M(\sB)$ also denoted by $\pi$. Let $\sB$ be a $\C^*$-subalgebra of $\M(A)$. We say that $\sB$ is non-degenerate if $\sB\sA = \sA$.  In this case $\M(\sB)$ can  be identified with a $\C^*$-subalgebra of $\M(\sA)$.  The symbol $\flip$ will denote the flip morphism between tensor product of operator algebras. If $X$ is a subset of topological vector space $Y$, by $ X ^{\textrm{\tiny{cls}}}$ we mean the {\it closed linear span} of $X$. In particular if $X\subset \sA$, where $\sA$ is a $\C^*$-algebra then  $ X ^{\textrm{\tiny{norm-cls}}}$ denotes the norm closure of the  linear span of $X$; if   $X\subset \sM$, where $\sM$ is a von Neumann algebra then $ X ^{ \textrm{\tiny{$\sigma$-weak cls}}}$ denotes  the $\sigma$-weak closure of the linear span of $X$.    For a $\C^*$-algebra $\sA$, the space of all functionals on $\sA$ and the state space of $\sA$ will be denoted by $\sA^*$ and $S(\sA)$ respectively. The predual of a von Neumann algebra $\sN$ will be denoted by $\sN_*$. For a Hilbert space $H$ the $\C^*$-algebras of compact   operators on $H$ will be denoted by $\mathcal{K}(H)$. The algebra of bounded operators acting on $H$ will be denoted by  $B(H)$. For $\xi, \eta\in H$, the symbol $\omega_{\xi, \eta}\in B(H)_*$ is the functional $T\mapsto \langle \xi,T\eta\rangle$.  

 For  the theory of locally compact quantum groups we refer to \cite{univ,KV,KVvN}. Let us recall that a von Neumann algebraic locally compact quantum group is a quadruple $\mathbb{G} = (\Linf(\GG), \Delta_\GG,\varphi_\GG,\psi_\GG)$, where $\Linf(\GG)$ is a von Neumann algebra with a coassociative comultiplication $\Delta_\GG\colon\Linf(\GG)\to\Linf(\GG)\vtens\Linf(\GG)$, and $\varphi_\GG$ and $\psi_\GG$ are, respectively,  normal semifinite  faithful left and right Haar weights on $\Linf(\GG)$.  The GNS Hilbert space of the right Haar weight $\psi_\GG$ will be denoted by  $\Ltwo(\GG)$ and the corresponding GNS map will be denoted by $\eta_\GG$. The \emph{antipode}, the \emph{scaling group} and the \emph{unitary antipode} will be denoted by $S$, $(\tau_t)_{t\in \RR}$ and $R$.
We will denote $(\sigma_t)_{t\in\RR}$ and $(\sigma^{\prime}_t)_{t\in\RR}$ the \emph{modular automorphism groups} assigned to $\varphi_\GG$  and $\psi_\GG$ respectively. The following   relation will be used throughout the paper (see \cite[Proposition 6.8]{KV})
\begin{equation}\label{modular}
\Delta_\GG\circ\tau_t=(\sigma_t \otimes \sigma^{\prime}_{-t})\circ\Delta_\GG.
\end{equation}
The multiplicative unitary $\ww^\GG\in B(\Ltwo(\GG)\otimes \Ltwo(\GG))$  is a unique unitary operator such that 
\[\ww^\GG(\eta_\GG(x)\otimes\eta_\GG(y)) = (\eta_\GG \otimes\eta_\GG )(\Delta_\GG(x)(\I\otimes y))\] 
for all $x,y\in D(\eta_\GG)$;  
 $\ww^\GG$  satisfies the pentagonal equation $\ww^\GG_{12}\ww^\GG_{13}\ww^\GG_{23} = \ww^\GG_{23}\ww^\GG_{12}$ \cite{BS,mu}. Using $\ww^\GG$,  $\GG$ can be recovered as follows:
\[\begin{split}
\Linf(\GG) =&\bigl\{ (\omega\otimes\id)\ww^\GG\st\omega\in B(\Ltwo(\GG))_*\bigr\} ^{\textrm{\tiny{$\sigma$-weak cls}}},\\
\Delta_\GG(x) =&\ww^\GG(x\otimes\I){\ww^\GG}^*.
\end{split}\] 
A locally compact quantum group admits a dual object $\hh\GG$. It can be described in terms of  
 $\ww^{\hh\GG} $
\[\begin{split} 
\Linf(\hh\GG)&=\bigl\{( \omega\otimes\id)\ww^{\hh\GG}\st\omega\in B(\Ltwo(\GG))_*\bigr\}^{\textrm{\tiny{$\sigma$-weak cls}}},\\ \Delta_{\hh\GG}(x)&=\ww^{\hh\GG}(x\otimes\I){\ww^{\hh\GG}}^*.
 \end{split}\] where  $\ww^{\hh\GG}=\flip({\ww^{\GG}})^*$. Note that $\ww^\GG\in\Linf(\hh\GG)\bar\otimes\Linf(\GG)$. 
 The  \emph{modular element} of $\GG$ will be denoted by $\delta$.
\begin{definition}\label{von_neumann}
A von Neumann subalgebra $\N$ of $\Linf(\GG)$ is called 
\begin{itemize}
\item \emph{Left coideal}  if $\Delta_\GG(\N)\subset\Linf(\GG)\vtens\N$;
\item \emph{Invariant subalgebra} if $\Delta_\GG(\N)\subset\N\vtens\N$;
\item \emph{Baaj-Vaes subalgebra} if $\N$ is an invariant subalgebra of $\Linf(\GG)$ which is preserved by the unitary antipode $R$  and the scaling group $(\tau_t)_{t\in\RR} $ of $\GG$;
\item \emph{Normal} if $\ww^\GG(\I\otimes\N){\ww^\GG}^*\subset\Linf(\hh\GG)\vtens\N$;
\item \emph{Integrable} if the set of integrable elements with respect to the right Haar weight $\psi_\GG$ is dense in $\N^+$; in other words, the restriction of $\psi_\GG$ to $\N$ is semifinite.
\end{itemize}
\end{definition}
Using terminology of \cite{SaS} a left coideal is nothing but a left-invariant von Neumann subalgebra of $\Linf(\GG)$. In what follows a left coideal will be called a coideal. 
If $\N$ is a coideal of $\Linf(\GG)$, then $\dd{\N}=\N^\prime\cap\Linf(\hh\GG)$ is a   coideal of $\Linf(\hh\GG)$ called the \emph{codual} of $\N$; it turns out that  $\dd{\dd{\N}}=\N$ (see \cite[Theorem 3.9]{embed}). 

The $\C^*$-algebraic version $(\C_0(\GG),\Delta_\GG)$ of a given quantum group $\GG$  is recovered from $\ww^\GG$ as follows 
\[\begin{split}\C_0(\GG) &=\bigl\{(\omega\otimes\id)\ww^\GG\st \omega\in B(\Ltwo(\GG))_*\bigr\}^{\textrm{\tiny{norm-cls}}},\\
\Delta_\GG(x) &=\ww^\GG(x\otimes\I){\ww^\GG}^*.
\end{split}\]
The comultiplication can be viewed as a morphism $\Delta_\GG\in \Mor(\C_0(\GG),\C_0(\GG)\otimes\C_0(\GG))$ and we have  $\ww^\GG\in\M(\C_0(\hh\GG)\otimes \C_0(\GG))$. 

\begin{definition}\label{weakPodandPod}
A non-degenerate $\C^*$-subalgebra  $\sB$ of $ \M(\C_0(\GG))$ 
\begin{itemize}
\item is called \emph{left-invariant}  if $(\mu\otimes\id)\Delta_\GG(\sB) \subset\sB$ for all $\mu\in\C_0(\GG)^*$;
\item is called  \emph{symmetric} if $\ww^\GG(\I\otimes\sB){\ww^\GG}^*\subset\M(\C_0(\hh\GG)\otimes\sB)$;
\item satisfies \emph{Podle\'s condition} if $\Delta_\GG(\sB)(\C_0(\GG)\otimes\I)=\C_0(\GG)\otimes\sB$;
\item satisfies \emph{weak Podle\'s condition} if $(\C_0(\GG)\otimes\I)\Delta_\GG(\sB)(\C_0(\GG)\otimes\I)=\C_0(\GG)\otimes\sB$
 \end{itemize}
 Let us note that Podle\'s condition$\implies$ weak Podle\'s condition$\implies $left-invariance.
  \end{definition}
  
We  adopt  the  following terminology from \cite[Section 1]{SaS}.
\begin{definition}\label{def:expected}
Let $\sB $ be a   $\C^*$-subalgebra of $\C_0(\GG)$. We say that 
\begin{itemize}
\item[(i)] $\sB$ is $\varphi_\GG$-expected if there exists a $\varphi_\GG$-preserving conditional expectation $E$ from $\C_0(\GG)$ onto $\sB$;
\item[(ii)] $\sB$ is $\psi_\GG$-expected if there exists a $\psi_\GG$-preserving conditional expectation $E$ from $\C_0(\GG)$ onto $\sB$;
\item[(iii)] $\sB$ is  expected if there exists a  conditional expectation $E$ from $\C_0(\GG)$ onto $\sB$, which preserves $\psi_\GG$ and $\varphi_\GG$.
\end{itemize}
Let $\sN$ be a   von Neumann subalgebra of $\Linf(\GG)$. We say that 
\begin{itemize}
\item[(i)] $\sN$ is $\varphi_\GG$-expected if there exists a $\varphi_\GG$-preserving conditional expectation $E$ from $\Linf(\GG)$ onto $\sN$;
\item[(ii)] $\sN$ is $\psi_\GG$-expected if there exists a $\psi_\GG$-preserving conditional expectation $E$ from $\Linf(\GG)$ onto $\sN$;
\item[(iii)] $\sN$ is  expected if there exists a  conditional expectation $E$ from $\Linf(\GG)$ onto $\sN$, which preserves $\psi_\GG$ and $\varphi_\GG$.\end{itemize}
\end{definition}
  We will  show (see Proposition \ref{streg}) that  a non-zero $\C^*$-subalgebra $\sB\subset \M(\C_0(\GG))$ such that $(\mu\otimes\id)\Delta_\GG(b) \in\sB$ for all $\mu\in\C_0(\GG)^*$ and $b\in\sB$ is automatically non-degenerate. In particular a non-zero $\C^*$-subalgebra $\sB\subset\M(\C_0(\GG))$ satisfying Podle\'s condition is non-degenerate.  Proposition  \ref{weak} provides a link between  weak Podle\'s condition discussed in \cite[Section 5]{BSV} (called  weak continuity) and  weak Podle\'s condition introduced in Definition \ref{weakPodandPod}. 
Let us recall the definition of an action  of a quantum group  $\GG$.
\begin{definition}\label{def:action_v_C} A \emph{(left) action} of quantum group $\GG$ on a 
\begin{itemize}
\item  von Neumann algebra $\sN$ is a unital injective normal $*$-homomorphism  $\alpha:\sN\to\Linf(\GG)\vtens\sN$ s.t. $(\Delta_\GG\otimes\id)\circ\alpha=(\id\otimes\alpha)\circ\alpha$. 
\item $\C^*$-algebra $\sB$ is an injective  morphism $\alpha\in\Mor(\sB,\C_0(\GG)\otimes\sB)$ s.t.  $(\Delta_\GG\otimes\id)\circ\alpha=(\id\otimes\alpha)\circ\alpha$.  
\end{itemize}
\end{definition}
Let us remark, that    some authors  define $\C^*$-actions as (not necessarily  injective) morphisms  $\alpha\in\Mor(\sB,\C_0(\GG)\otimes\sB)$ satisfying  $(\Delta_\GG\otimes\id)\circ\alpha=(\id\otimes\alpha)\circ\alpha$ and  Podle\'s  condition (see Definition \ref{def:pod_cond}).  For a nice discussion of Podle\'s condition  see \cite{Sol_Pod}. 

In the course of this paper we shall use  the action  $\beta:\C_0(\GG)\to \M(\C_0(\hh\GG) \otimes\C_0(\GG))$  of $\hh\GG$ on $\C_0(\GG)$, where 
\begin{equation}\label{bet}
 \beta(x) ={\ww^\GG}(\I\otimes x){\ww^\GG}^*.
 \end{equation}
 Note that $\beta$ admits a von Neumann extension (which we shall also denote by $\beta$):
\begin{equation}\label{betvN} \beta(x) = \ww^\GG(\I\otimes x){\ww^\GG}^*\in\Linf(\hh\GG)\vtens\Linf(\GG)\end{equation} for all $x\in\Linf(\GG)$. 

\begin{definition}\label{def:pod_cond}
Let $\alpha\in\Mor(\sB,\C_0(\GG)\otimes\sB)$ be an action of $\GG$ on a $\C^*$-algebra $\sB$. We say  that $\sB$ satisfies 
\begin{itemize}
\item \emph{$\alpha$-Podle\'s condition } if $\alpha(\sB)(\C_0(\GG)\otimes\I)=\C_0(\GG)\otimes\sB$,
\item \emph{$\alpha$-weak Podle\'s condition} if 
$\sB=\big\{(\omega\otimes\id)\alpha(\sB)\st \omega\in B(\Ltwo(\GG))_* \big\}^{\text{\tiny{norm-cls}}}$.
\end{itemize}
 Let us note that $\alpha$-Podle\'s condition$\implies$$\alpha$-weak Podle\'s condition.
\end{definition}
\begin{proposition}\label{weak}
Let $\alpha$ be an action of $\GG$ on a $\C^*$-algebra $\sB$. Then $\sB$ satisfies $\alpha$-weak Podle\'s condition if and only if 
\[(\C_0(\GG)\otimes\I)\alpha(\sB)(\C_0(\GG)\otimes\I) = \C_0(\GG)\otimes\sB.\]
\end{proposition}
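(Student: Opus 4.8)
The plan is to prove the two implications separately; the implication ``density condition $\Rightarrow$ $\alpha$-weak Podle\'s condition'' is the more elementary one and I would treat it first. Throughout set $\sB_0:=\big\{(\omega\otimes\id)\alpha(\sB)\st\omega\in B(\Ltwo(\GG))_*\big\}^{\text{\tiny{norm-cls}}}$ and recall the following standard facts: the density conditions $\Delta_\GG(\C_0(\GG))(\C_0(\GG)\otimes\I)=\C_0(\GG)\otimes\C_0(\GG)=(\C_0(\GG)\otimes\I)\Delta_\GG(\C_0(\GG))$ and $\Delta_\GG(\C_0(\GG))(\I\otimes\C_0(\GG))=\C_0(\GG)\otimes\C_0(\GG)$; the left-invariance $(\omega\otimes\id)\Delta_\GG(\C_0(\GG))\subseteq\C_0(\GG)$ for $\omega\in B(\Ltwo(\GG))_*$; the identity $\C_0(\GG)\,\M(\C_0(\GG))\,\C_0(\GG)=\C_0(\GG)$; the non-degeneracy of $\alpha$, i.e.\ $\overline{\alpha(\sB)(\C_0(\GG)\otimes\sB)}=\C_0(\GG)\otimes\sB$, together with the elementary inclusion $\alpha(\sB)(\C_0(\GG)\otimes\sB)\subseteq\C_0(\GG)\otimes\sB$; and that $\C_0(\GG)$ acts non-degenerately on $\Ltwo(\GG)$.

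\emph{Backward direction.} Assume $(\C_0(\GG)\otimes\I)\alpha(\sB)(\C_0(\GG)\otimes\I)=\C_0(\GG)\otimes\sB$. For $\sB_0\subseteq\sB$: given $b\in\sB$ and $\omega=\omega_{\xi,\eta}$, Cohen factorization gives $\xi=a\xi_0$ and $\eta=a'\eta_0$ with $a,a'\in\C_0(\GG)$, whence $(\omega_{\xi,\eta}\otimes\id)\alpha(b)=(\omega_{\xi_0,\eta_0}\otimes\id)\big((a^*\otimes\I)\alpha(b)(a'\otimes\I)\big)$; the inner element lies in $\C_0(\GG)\otimes\sB$ by hypothesis and slicing $\C_0(\GG)\otimes\sB$ in the first leg lands in $\sB$, so $(\omega\otimes\id)\alpha(b)\in\sB$. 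For $\sB\subseteq\sB_0$: given $0\ne b\in\sB$, choose $0\ne c\in\C_0(\GG)$ and $\xi,\eta\in\Ltwo(\GG)$ with $\is{\xi}{c\eta}\ne0$, approximate $c\otimes b$ in norm by finite sums of elements $(c_1\otimes\I)\alpha(b')(c_2\otimes\I)$, and apply $\omega_{\xi,\eta}\otimes\id$; by the identity $\omega_{\xi,\eta}(c_1\,x\,c_2)=\omega_{c_1^*\xi,\,c_2\eta}(x)$ each summand becomes $(\omega_{c_1^*\xi,\,c_2\eta}\otimes\id)\alpha(b')\in\sB_0$, so $\is{\xi}{c\eta}\,b\in\sB_0$ and hence $b\in\sB_0$. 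Therefore $\sB=\sB_0$.

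\emph{Forward direction.} Assume $\sB=\sB_0$. Since $\alpha$ is contractive and $\C_0(\GG)\otimes\sB$ is closed, to prove $\alpha(\sB)(\C_0(\GG)\otimes\I)\subseteq\C_0(\GG)\otimes\sB$ — which, multiplied on the left by $\C_0(\GG)\otimes\I$, already yields the inclusion ``$\subseteq$'' of the asserted equality — it suffices to treat $b=(\omega\otimes\id)\alpha(b_0)$ with $\omega\in B(\Ltwo(\GG))_*$ and $b_0\in\sB$. The coaction identity $(\Delta_\GG\otimes\id)\circ\alpha=(\id\otimes\alpha)\circ\alpha$, sliced in the first leg by $\omega$, gives $\alpha(b)=(\omega\otimes\id\otimes\id)\big((\Delta_\GG\otimes\id)\alpha(b_0)\big)$, hence
\[\alpha(b)(c\otimes\I)=(\omega\otimes\id\otimes\id)\Big[(\Delta_\GG\otimes\id)\alpha(b_0)\,(\I\otimes c\otimes\I)\Big],\]
and one shows the right-hand side lies in $\C_0(\GG)\otimes\sB$ by combining: the density conditions (to deal with the second tensor leg, which is the one cut down by $c$); the left-invariance of $\C_0(\GG)$ together with the normality of $\omega$ (to slice the first tensor leg); and $\C_0(\GG)\,\M(\C_0(\GG))\,\C_0(\GG)=\C_0(\GG)$ (to absorb the multiplier part carried by the first leg of $\alpha(b_0)$). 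For the reverse inclusion, use $\sB=\sB_0$ again to write $\C_0(\GG)\otimes\sB$ as the closed linear span of the elements $c_0\otimes(\omega\otimes\id)\alpha(b)$, and realise each of these inside $(\C_0(\GG)\otimes\I)\alpha(\sB)(\C_0(\GG)\otimes\I)$ by running the previous computation in reverse, the extra factor $c_0$ being supplied by the density condition $\Delta_\GG(\C_0(\GG))(\C_0(\GG)\otimes\I)=\C_0(\GG)\otimes\C_0(\GG)$ and the non-degeneracy of $\alpha$.

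\emph{The main obstacle.} All the real work sits in the forward direction, and the delicate point is that an action of $\GG$ is valued in a multiplier algebra, $\alpha(\sB)\subset\M(\C_0(\GG)\otimes\sB)$, and not in $\C_0(\GG)\otimes\sB$: a priori the first legs of $\alpha(b)$ and the slices $(\mu\otimes\id)\alpha(b)$ only belong to $\M(\C_0(\GG))$, so one cannot reason leg by leg and must keep the tensor expressions coupled. What makes everything go through is the regularizing effect of convolving with a \emph{normal} functional — the fact that $(\omega\otimes\id)\Delta_\GG$ maps $\C_0(\GG)$ into $\C_0(\GG)$ when $\omega\in B(\Ltwo(\GG))_*$, which compensates for this multiplier phenomenon — and it has to be invoked at exactly the right step, in conjunction with the quantum group's density conditions. (Alternatively, the equivalence can be extracted from the treatment of weak continuity of $\cst$-actions in \cite[Section 5]{BSV}.)
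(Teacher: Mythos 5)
Your backward implication (the two--sided density identity implies the $\alpha$-weak Podle\'s condition) is correct, and is in fact more detailed than the paper, which dismisses this direction as clear. The genuine gap is in the forward implication. You reduce it to the strictly stronger one--sided containment $\alpha(\sB)(\C_0(\GG)\otimes\I)\subseteq\C_0(\GG)\otimes\sB$, and at the decisive step you only list ingredients rather than give an argument. Concretely, for $b=(\omega\otimes\id)\alpha(b_0)$ one has
\[\alpha(b)(c\otimes\I)=(\omega\otimes\id\otimes\id)\bigl(\ww^\GG_{12}\,\alpha(b_0)_{13}\,\ww^{\GG^*}_{12}(\I\otimes c\otimes\I)\bigr),\]
and after writing $\omega=y'\cdot\omega_0\cdot y$ with $y,y'\in\C_0(\hh\GG)$ the right factor $\ww^{\GG^*}(y'\otimes c)$ does land in $\C_0(\hh\GG)\otimes\C_0(\GG)$, but the left factor $(y\otimes\I)\ww^\GG$ does not belong to the norm closure of $\C_0(\hh\GG)\odot\M(\C_0(\GG))$ in general (already for $\GG=\mathbb{Z}$ it corresponds to the function $n\mapsto y\lambda_n$, whose range is not totally bounded), so legs $1$ and $2$ cannot be separated and $\omega$ cannot be pushed inside. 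None of your three listed tools --- the density conditions, left invariance plus normality of $\omega$, or $\C_0(\GG)\M(\C_0(\GG))\C_0(\GG)=\C_0(\GG)$ --- addresses this coupling; what removes it is precisely an additional cut-down by $\C_0(\GG)$ in the second leg on the left, i.e.\ the two--sided expression appearing in the statement. Note also that your intermediate claim sits strictly between the weak Podle\'s and the Podle\'s conditions; since weak Podle\'s does not imply Podle\'s without regularity (this is the point of \cite{BSV} and of Lemma \ref{weakPodreg}), you should not expect such a one--sided strengthening to be available in the generality of the proposition, and in any case you have not proved it.

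The paper proves only what is stated, in a single chain of equalities of closed linear spans: substitute $\sB=\bigl\{(\omega\otimes\id)\alpha(\sB)\st\omega\in B(\Ltwo(\GG))_*\bigr\}^{\text{\tiny{norm-cls}}}$ into the left-hand side, use $(\id\otimes\alpha)\circ\alpha=(\Delta_\GG\otimes\id)\circ\alpha$ and $\Delta_\GG=\ww^\GG(\cdot\otimes\I){\ww^\GG}^*$ to arrive at elements $(\omega\otimes\id\otimes\id)\bigl((y\otimes x\otimes\I)\ww^\GG_{12}\alpha(\sB)_{13}\ww^{\GG^*}_{12}(y'\otimes x'\otimes\I)\bigr)$ with $y,y'\in\C_0(\hh\GG)$, $x,x'\in\C_0(\GG)$, and then absorb $\ww^\GG_{12}$ and $\ww^{\GG^*}_{12}$ into the cut-downs using that $\ww^\GG$ is a unitary multiplier of $\C_0(\hh\GG)\otimes\C_0(\GG)$, so that $\bigl\{(y\otimes x)\ww^\GG\st y\in\C_0(\hh\GG),\,x\in\C_0(\GG)\bigr\}$ has the same closed linear span as $\C_0(\hh\GG)\otimes\C_0(\GG)$; slicing the first leg and invoking weak Podle\'s once more yields $\C_0(\GG)\otimes\sB$, and both inclusions come out simultaneously. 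Your reverse inclusion, which ``runs the previous computation in reverse'', inherits the gap described above; redoing the forward direction along the paper's lines repairs both at once.
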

\begin{proof}
The ``if'' part is clear. In order to get the ``only if `` 
we compute
\begin{align*}
(\C_0(\GG)&\otimes\I)\alpha(\sB)(\C_0(\GG)\otimes\I)\\&= \big\{(\C_0(\GG)\otimes\I)\alpha\big((\omega\otimes\id)(\alpha(\sB))\big)(\C_0(\GG)\otimes\I) \st \omega\in B(\Ltwo(\GG))_*\big\}^{\text{\tiny{norm-cls}}}\\&=\big\{  (\omega\otimes\id\otimes\id)((y\otimes x\otimes \I)\ww^\GG_{12}\alpha(\sB)_{13}\ww^{\GG^*}_{12}(y'\otimes x'\otimes \I)) \st \\& \quad\quad\quad\quad\quad\quad\quad\quad\omega\in B(\Ltwo(\GG))_*,\, y,y'\in\C_0(\hh\GG),\, x,x'\in\C_0(\GG)\big\}^{\text{\tiny{norm-cls}}}\\&=\big\{  (\omega\otimes\id\otimes\id)((y\otimes x\otimes \I) \alpha(\sB)_{13} (y'\otimes x'\otimes \I)) \st \\& \quad\quad\quad\quad\quad\quad\quad\quad\omega\in B(\Ltwo(\GG))_*,\, y,y'\in\C_0(\hh\GG),\, x,x'\in\C_0(\GG)\big\}^{\text{\tiny{norm-cls}}} = \C_0(\GG)\otimes\sB.
\end{align*}
\end{proof}

A locally compact quantum group $\GG$ is assigned with a universal version \cite{univ}. The universal version $\C_0^u(\GG)$  of $\C_0(\GG)$    is equipped  with a comultiplication $\Delta_\GG^u \in\Mor(\C_0^u(\GG),\C_0^u(\GG)\otimes \C_0^u(\GG))$ satisfying   (see \cite[Proposition 6.1]{univ})
\[\Delta_\GG^u(\C_0^u(\GG))(\C_0^u(\GG)\otimes\I) =\C_0^u(\GG)\otimes \C_0^u(\GG) = \Delta_\GG^u(\C_0^u(\GG))(\I\otimes \C_0^u(\GG))\] which will be also referred to as Podle\'s condition. 
The \emph{counit} is a $*$-homomorphism $\varepsilon: \C_0^u(\GG)\to\CC$ satisfying $(\id\otimes\varepsilon)\circ\Delta_\GG^u=\id=(\varepsilon\otimes\id)\circ\Delta_\GG^u$. Multiplicative unitary $\ww^\GG\in\M(\C_0(\hh\GG)\otimes\C_0(\GG))$ admits the universal lift  $\WW^\GG\in\M(\C_0^u(\hh\GG)\otimes \C_0^u(\GG))$.  The reducing morphisms for $\GG$ and $\hh\GG$ will be denoted by $\Lambda_\GG\in\Mor(\C_0^u(\GG),\C_0(\GG))$ and $\Lambda_{\hh\GG}\in\Mor(\C_0^u(\hh\GG),\C_0(\hh\GG))$  respectively. We have
$(\Lambda_{\hh\GG}\otimes\Lambda_\GG)(\WW^\GG)=\ww^\GG$. 
We shall also use the half-lifted versions of $\ww^\GG$, $\Ww^\GG=(\id\otimes\Lambda_\GG)(\WW^\GG)\in\M(\C_0^u(\hh\GG)\otimes \C_0(\GG))$ and $\wW^\GG=(\Lambda_{\hh\GG}\otimes\id)(\WW^\GG )\in\M(\C_0(\hh\GG)\otimes \C_0^u(\GG))$. They satisfy the appropriate versions of pentagonal equation 
\[\begin{split}
\Ww^\GG_{12}\Ww^\GG_{13}\ww^\GG_{23}&=\ww^\GG_{23}\Ww^\GG_{12},\\
\ww^\GG_{12}\wW^\GG_{13}\wW^\GG_{23}&=\wW^\GG_{23}\ww^\GG_{12}.
\end{split}\]
The half-lifted versions of comultiplications will be denoted by  $\Delta_r^{r,u}\in\Mor(\C_0(\GG),\C_0(\GG)\otimes \C_0^u(\GG))$ 
and 
$\hh{\Delta}_r^{r,u}\in\Mor(\C_0(\hh\GG),\C_0(\hh\GG)\otimes \C_0^u(\hh\GG))$, e.g.  
\[\begin{split}
\Delta_r^{r,u}(x)&=\wW^\GG(x\otimes\I){\wW^\GG}^*,~~~~~x\in\C_0(\GG).
\end{split} \]
  We have 
\begin{equation}\label{LDDrL}
\begin{split}
(\Lambda_\GG\otimes\id)\circ\Delta_\GG^u &= \Delta_r^{r,u}\circ\Lambda_\GG,\\
(\Lambda_{\hh\GG}\otimes\id)\circ\Delta_{\hh\GG}^u &= \hh{\Delta}_r^{{r,u}} \circ\Lambda_{\hh\GG}.
\end{split}
\end{equation}
The following forms of  Podle\'s conditions are satisfied 
\begin{align*}
\Delta_r^{r,u}(\C_0(\GG))(\C_0(\GG)\otimes\I)&=\C_0(\GG)\otimes \C_0^u(\GG),\\
\hh{\Delta}_r^{{r,u}}(\C_0(\hh\GG))(\C_0(\hh\GG)\otimes\I)&=\C_0(\hh\GG)\otimes \C_0^u(\hh\GG).
\end{align*}
 We shall consider $\C^*$-subalgebras $\sB$ of $\M(\C_0^u(\GG))$ and the following terminology.
\begin{definition}\label{weakPodandPoduni}
A non-degenerate $\C^*$-subalgebra  $\sB$ of $\M(\C^u_0(\GG))$ 
\begin{itemize}
\item is called \emph{left-invariant}  if $(\mu\otimes\id)\Delta^u_\GG(\sB) \subset\sB$ for all $\mu\in\C_0^u(\GG)^*$;
\item is called  \emph{symmetric} if $\wW^\GG(\I\otimes\sB){\wW^\GG}^*\subset\M(\C_0(\hh\GG)\otimes\sB)$;
\item satisfies \emph{Podle\'s condition} if $\Delta^u_\GG(\sB)(\C^u_0(\GG)\otimes\I)=\C^u_0(\GG)\otimes\sB$;
\item satisfies \emph{weak Podle\'s condition} if $(\C_0^u(\GG)\otimes\I)\Delta_\GG^u(\sB)(\C_0^u(\GG)\otimes\I)=\C_0^u(\GG)\otimes\sB$.
 \end{itemize}
  \end{definition}
  Furthermore we adopt  the following \begin{definition}\label{def:pod_cond1}
Let $\GG$ be a locally compact quantum group, $\sB$ a $\C^*$-algebra and  let $\alpha\in\Mor(\sB,\C^u_0(\GG) \otimes\sB)$ be such that $(\id\otimes\alpha)\circ\alpha = (\Delta_\GG^u\otimes\id)\circ\alpha$.     We say  that $\sB$ satisfies 
\begin{itemize}
\item \emph{$\alpha$-Podle\'s condition } if $\alpha(\sB)(\C^u_0(\GG)\otimes\I)=\C^u_0(\GG)\otimes\sB$,
\item \emph{$\alpha$-weak Podle\'s condition} if 
$\sB=\big\{(\omega\otimes\id)\alpha(\sB)\st \omega\in \C_0^u(\GG)^* \big\}^{\text{\tiny{norm-cls}}}$.
\end{itemize}
 Let us note that $\alpha$-Podle\'s condition$\implies$$\alpha$-weak Podle\'s condition.
\end{definition}
Given a locally compact quantum group $\GG$, the comultiplications $\Delta_\GG$ and $\Delta_\GG^u$ induce Banach algebra structures on $\Linf(\GG)_*$ and $\C_0^u(\GG)^*$ respectively.  The corresponding multiplications will be denoted by $\starr$ and $\staru$. We shall identify $\Linf(\GG)_*$ with a subspace of $\C_0^u(\GG)^*$ when convenient. Under this identification $\Linf(\GG)_*$ forms a two sided ideal in $\C_0^u(\GG)^*$.   Following \cite{univ},  for any $\mu\in\C_0^u(\GG)^*$ we   define a normal map $ \Linf(\GG)\to \Linf(\GG)$ such that $ x\mapsto(\id\otimes\mu)(\wW^\GG(x\otimes\I){\wW^\GG}^*)$ for all $x\in\Linf(\GG)$. We shall use a notation $\mu\staru x=  (\id\otimes\mu)(\wW^\GG(x\otimes\I){\wW^\GG}^*)$.

A state $\omega\in S(\C_0^u(\GG))$ is said to be an {\it idempotent state} if $\omega\staru\omega=\omega$. For a nice survey   describing the history and motivation behind the study of idempotent states see \cite{Salmi_Survey}.  For the theory of idempotent state we refer   to \cite{SaS}.  We shall use \cite[Proposition 4]{SaS} which in particular states that an  idempotent state  $\omega\in S(\C_0^u(\GG))$ is preserved by the universal scaling group $\tau_t^u$:
\begin{equation}\label{prescgr}
\omega\circ\tau_t^u = \omega
\end{equation}
for all $t\in\mathbb{R}$. An idempotent  state $\omega\in S(\C_0^u(\GG))$ yields a conditional expectation $E:\C_0(\GG)\to\C_0(\GG)$ (see \cite{SaS})
\[E(x) =\omega\staru x\] for all $x\in\C_0(\GG)$. Using \eqref{prescgr} we easily get 
\begin{equation}\label{prescgr1}\tau_t(E(x)) = E(\tau_t(x)).\end{equation}
Conditional expectation extends to $E:\Linf(\GG)\to\Linf(\GG)$ and clearly  \eqref{prescgr1} holds for all $x\in\Linf(\GG)$. 
 
 The image of $\sB = E(\C_0(\GG))$ forms a $\C^*$-subalgebra of $\C_0(\GG)$. Let us note that $E$ admits the universal version $E^u:\C_0^u(\GG)\to\C_0^u(\GG)$
\[E^u =(\id\otimes \omega)\circ\Delta_\GG^u.\] In particular $\sB$ admits the universal version $\sB^u = E^u(\C^u_0(\GG))$.  Let $(e_i)_{i\in I}$ be an approximate unit for $\C_0^u(\GG)$. Then  $\lim_i E^u(e_i)=\I$  strictly. Since $E^u(e_i)\in \sB^u$ we see that   $\sB^u$ is a non-degenerate $\C^*$-subalgebra of $\C^u_0(\GG)$. Similarly, $\sB$ is non-degenerate $\C^*$-subalgebra of $\C_0(\GG)$. It is easy to check that 
\[\M(\sB) = \{x\in\M(\C_0(\GG)):E(x) = x\}\] and 
\begin{equation}\label{eq:ce_mult}\M(\sB^u) = \{x\in\M(\C^u_0(\GG)):E^u(x) = x\}.\end{equation}
 Since $E$ is $\Delta_\GG$ - covariant, i.e.
\[\Delta_\GG\circ E = (\id\otimes E)\circ\Delta_\GG\] we conclude that
\[\Delta_\GG(\sB)\subset \M(\C_0(\GG)\otimes\sB).\] Moreover $\sB$ satisfies Podle\'s condition. Indeed
\[\Delta_\GG(\sB)(\C_0(\GG)\otimes \I) = (\id\otimes E)(\Delta_\GG(\C_0(\GG))(\C_0(\GG)\otimes\I)) = \C_0(\GG)\otimes \sB.\] Similarly, $\sB^u$ satisfies Podle\'s condition. 

A locally compact quantum group $\GG$ is called  \emph{coamenable} if $\Lambda_\GG\in\Mor(\C_0^u(\GG),\C_0(\GG))$ is an isomorphism. In this case we shall identify  $\C_0^u(\GG)=\C_0(\GG)$. It can be shown that $\GG$ is  coamenable if and only if $\C_0(\GG)$ admits  counit   \cite[Theorem 3.1]{BT}.
A quantum group $\GG$ is  \emph{compact} if the $\C^*$-algebra $\C_0(\GG)$ is unital. In this case we write $\C(\GG)$ and $\C^u(\GG)$ instead of $\C_0(\GG)$ and $\C_0^u(\GG)$ respectively.

A locally compact quantum group $\GG$ is said to be
\begin{itemize}
\item  \emph{regular}  if 
\begin{equation}\label{regularity}
\big\{ (\id\otimes\omega)(\Sigma\ww^\GG) \st \omega\in B(\Ltwo(\GG))_* \big\}^{\text{\tiny{norm-cls}}} =\mathcal{K}(\Ltwo(\GG)),
\end{equation}
\item \emph{semi-regular} if 
\[\big\{ (\id\otimes\omega)(\Sigma\ww^\GG) \st \omega\in B(\Ltwo(\GG))_* \big\}^{\text{\tiny{norm-cls}}} \supset\mathcal{K}(\Ltwo(\GG))\] 
\end{itemize} where $\Sigma:\Ltwo(\GG)\otimes\Ltwo(\GG) \to \Ltwo(\GG)\otimes\Ltwo(\GG)$ is the Hilbert space flip.  
It can be shown that    $\GG$ is  regular if and only if  (see \cite[Proposition 3.6]{BS})
\begin{equation}\label{reg}
\C_0(\hh\GG)\otimes \C_0(\GG)=\bigl\{(x\otimes \I)\ww^\GG(\I\otimes y)\st x\in \C_0(\hh\GG), y\in \C_0(\GG)\bigr\}^{\text{\tiny{norm-cls}}}.
\end{equation}
Let us note that \eqref{reg} holds if and only if    $\C_0(\GG)$ satisfies  $\beta$-Podle\'s condition (for the definition of $\beta$ see \eqref{bet}). Remarkably, it was proved in  \cite[Proposition 5.6]{BSV} that $\GG$ is regular if and only if $\C_0(\GG)$ satisfies $\beta$-weak Podle\'s condition. 

Let $\HH$ and $\GG$ be  locally compact quantum groups. Then a morphism $\pi\in\Mor(\C_0^u(\GG),\C_0^u(\HH))$ such that 
\[
(\pi\otimes\pi)\circ\Delta_\GG^u=\Delta_\HH^u\circ\pi
\] is said to define a homomorphism from $\HH$ to $\GG$. If $\pi(\C_0^u(\GG))=\C_0^u(\HH)$, then $\HH$ is called  \emph{Woronowicz-closed  quantum subgroup} of $\GG$ \cite{DKSS}. A homomorphism from $\HH$ to $\GG$ admits the dual homomorphism  ${\hh\pi}\in\Mor(\C_0^u(\hh\HH), \C_0^u(\hh\GG))$ such that 
\[
(\id\otimes\pi)(\WW^\GG) = ({\hh\pi}\otimes\id)(\WW^\HH).
\]
A homomorphism  from  $\HH$ to $\GG$ identifies $\HH$ as a  \emph{closed quantum subgroup} $\GG$ if there exists an injective  normal unital $*$-homomorphism $\gamma:\Linf(\hh\HH)\to\Linf(\hh\GG)$ such that 
\[
\Lambda_{\hh\GG}\circ{\hh\pi}(x) = \gamma\circ\Lambda_{\hh\HH}(x)
\] for all $x\in\C_0^u(\hh\HH)$. 
Let $\HH$ be a closed quantum subgroup of $\GG$, then $\HH$ acts on $\Linf(\GG)$   (in the von Neumann algebraic sense) by the following formula 
\[
 \alpha:\Linf(\GG)\to\Linf(\GG)\vtens\Linf(\HH),~~~ x\mapsto V(x\otimes \I)V^*
 \]
where \begin{equation}\label{eq:def_V}V =(\gamma\otimes\id)(\ww^\HH).\end{equation} The fixed point space of $\alpha$  is denoted by  \[\Linf(\GG/\HH) =\big\{x\in\Linf(\GG)\st \alpha(x)=x\otimes\I \big\}\]  and referred to as the algebra of bounded functions on the quantum homogeneous space $\GG/\HH$. If $\HH$ is  a compact quantum subgroup of $\GG$ then there is a conditional expectation $E:\Linf(\GG)\to\Linf(\GG)$ onto  $\Linf(\GG/\HH)$ which is defined by 
\begin{equation}\label{eq:def_ce_H} E=(\id\otimes\psi_\HH)\circ\alpha\end{equation} where $\psi_\HH$ is the Haar measure of $\HH$. 

According to [9, Definition 2.2] we
say that $\HH$ is an  \emph{open quantum subgroup} of $\GG$ if there is a surjective normal $*$-homomorphism
$\rho:\Linf(\GG)\to\Linf(\HH)$  such that 
\[\Delta_\HH \circ\rho= (\rho\otimes\rho)\circ\Delta_\GG.
\]
Every open quantum subgroup is closed \cite[Theorem 3.6]{KKS}. We recall that a projection $P\in\Linf(\GG)$ is a  \emph{group-like projection} if $\Delta_\GG(P)(\I\otimes P)=P\otimes P$. There is a 1-1 correspondence between (isomorphism  classes of) open quantum subgroups of $\GG$ and  central group-like projections in $\GG$ \cite[Theorem 4.3]{KKS}. The group-like projection assigned to $\HH$, i.e.  the central support of $\rho$, will be denoted by $\I_\HH$. 
\section{Lifting and non-degeneracy results}\label{lifsec}
In this section we shall prove a number of lifting and non-degeneracy results. Not all of them will be used in next sections. The reader who is focused on the theory of idempotent states should get familiar with  Proposition \ref{streg}, Proposition \ref{streg1} and  Proposition \ref{sym}  and may go directly to next sections but we believe that the results obtained in this section  are interesting in its own and have the potential to be used elsewhere. 

Let $\sB$ be a non-zero $\C^*$-subalgebra of $\M(\C_0(\GG))$. If $\sB$ is non-degenerate then  $\M(\sB)$ can  be identified with   a subalgebra of $\M(\C_0(\GG))$. The next  proposition is thus important while considering the symmetry  condition $\ww^\GG(\I\otimes \sB){\ww^\GG}^*\subset \M(\C_0(\hh\GG)\otimes \sB)$ for a non-zero left-invariant $\sB\subset\M(\C_0(\GG))$ (see  Section \ref{salres}). 
 
\begin{proposition}\label{streg}
Let $\sB$ be a non-zero left-invariant  $\C^*$-subalgebra of $\M(\C_0(\GG))$.  Then  $\sB$ is non-degenerate, $\sB\C_0(\GG) = \C_0(\GG)$. 
\end{proposition}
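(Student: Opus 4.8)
The plan is to show that the closed ideal $\sJ = \overline{\sB\,\C_0(\GG)}$ (which is the closed linear span of products $ba$ with $b\in\sB$, $a\in\C_0(\GG)$) is in fact all of $\C_0(\GG)$, equivalently that the open support projection of $\sJ$ in $\Linf(\GG)$ equals $\I$. Since $\sJ$ is a closed two-sided ideal of $\C_0(\GG)$, its $\sigma$-weak closure in $\Linf(\GG)$ is of the form $\Linf(\GG)z$ for a central projection $z\in\Linf(\GG)$, and the claim $\sJ=\C_0(\GG)$ is equivalent to $z=\I$. The key is to exploit left-invariance of $\sB$: for every $\mu\in\C_0(\GG)^*$ and $b\in\sB$ we have $(\mu\otimes\id)\Delta_\GG(b)\in\sB$, so $\sB$ is preserved under the ``convolution'' action of functionals.

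First I would record that $z\neq 0$: since $\sB\neq\{0\}$, pick $0\neq b\in\sB$; as $\C_0(\GG)$ acts nondegenerately on $\Ltwo(\GG)$, $b\,\C_0(\GG)\neq\{0\}$, hence $\sJ\neq\{0\}$ and $z\neq 0$. Next, the heart of the argument: I would show that $z$ is $\Delta_\GG$-coinvariant in the sense that $\Delta_\GG(z)(\I\otimes z)=\I\otimes z$, i.e.\ $z$ is ``group-like from the right'' for the left coideal structure, or more directly that the $\sigma$-weak closure $\overline{\sB}^{\,\sigma w}$ is a coideal of $\Linf(\GG)$ containing $\Linf(\GG)z$. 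The point is that $\sJ$ being an ideal means $\sigma$-weakly it is $\Linf(\GG)z$, and left-invariance of $\sB$ combined with $\Delta_\GG$ being a $*$-homomorphism forces $\Delta_\GG(z)\geq \I\otimes z$ (apply slice maps $\mu\otimes\id$ to $\Delta_\GG(\sB)$ and use that slices land in $\sB\subset\Linf(\GG)z$). Then, using the counit-like density $\Delta_\GG(\C_0(\GG))(\C_0(\GG)\otimes\I)=\C_0(\GG)\otimes\C_0(\GG)$ together with $\Delta_\GG(z)\ge\I\otimes z$, one slices in the first leg by the right Haar weight (or by a net of states approximating the counit on the dual side) to deduce that $z$, being a central projection dominated after comultiplication by $\I\otimes z$, must satisfy $z=\I$: concretely, $(\psi_\GG\otimes\id)$ applied to $(\I\otimes(\I-z))\Delta_\GG(z)=0$ gives $(\I-z)\cdot\psi_\GG(z)\cdot\I$-type relation, and faithfulness/semifiniteness of $\psi_\GG$ forces $\I-z=0$.

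Alternatively, and perhaps more cleanly, I would argue via the multiplicative unitary: left-invariance of $\sB$ is equivalent to $\ww^\GG(\C_0(\hh\GG)\otimes\I)(\I\otimes\sB)\subset$ something, but the slicker route is to use that $(\mu\otimes\id)\Delta_\GG(b)\in\sB$ for all $\mu$ means $\sB$ is stable under a family of completely bounded maps whose closed span of ranges, applied to a single $0\neq b$, already generates a nondegenerate algebra because these maps separate points of $\C_0(\GG)$ in the appropriate sense. The main obstacle I anticipate is the second step: verifying that the central support projection $z$ of $\sB$ in $\Linf(\GG)$ genuinely satisfies the coinvariance inequality $\Delta_\GG(z)\ge\I\otimes z$ from left-invariance of $\sB$ alone (rather than from Podle's condition), and then squeezing out $z=\I$; this requires a careful slice-map computation and an appeal to faithfulness of the Haar weight, and is where one must be sure not to secretly use a stronger hypothesis than left-invariance. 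Once $z=\I$ is established, non-degeneracy $\sB\,\C_0(\GG)=\C_0(\GG)$ follows because a closed ideal of a $\C^*$-algebra with full open support projection is the whole algebra.
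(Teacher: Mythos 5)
Your first half is essentially the paper's first step, but two of its claims are off. The set $\sJ=\overline{\sB\,\C_0(\GG)}$ is only a \emph{right} ideal (there is no reason that $c\,b\,a$ lies in it for $c\in\C_0(\GG)$, since $\sB$ sits in $\M(\C_0(\GG))$ with no commutation hypothesis), so its $\sigma$-weak closure is of the form $p\Linf(\GG)$ for a projection $p$ that need not be central; fortunately centrality is never needed. Also the inequality you first assert, $\Delta_\GG(z)\geq\I\otimes z$, points the wrong way: left-invariance of $\sB$ gives $(\mu\otimes\id)\Delta_\GG(\sB\,\C_0(\GG))\subset \overline{\sB\,\C_0(\GG)}^{\,\sigma w}$, hence $\Delta_\GG(p)\in\Linf(\GG)\vtens p\Linf(\GG)$, i.e.\ $\Delta_\GG(p)\leq\I\otimes p$ --- which is in fact the inequality your subsequent weight computation $(\I\otimes(\I-z))\Delta_\GG(z)=0$ secretly uses. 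With that correction, your argument via right-invariance and faithfulness of $\psi_\GG$ is a reasonable sketch of why $p=\I$ (the paper simply quotes \cite[Lemma 6.4]{KV} here), so the conclusion that $\sB\,\C_0(\GG)$ is $\sigma$-weakly dense in $\Linf(\GG)$ is fine.

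The genuine gap is the last step. From $p=\I$ you conclude $\sB\,\C_0(\GG)=\C_0(\GG)$ ``because a closed ideal with full open support projection is the whole algebra,'' but the projection you computed lives in $\Linf(\GG)$, not in the bidual $\C_0(\GG)^{**}$ where open support projections of ideals live, and $\sigma$-weak density in $\Linf(\GG)$ of a closed (right or even two-sided) ideal does \emph{not} force it to be all of $\C_0(\GG)$: for a non-discrete compact group $G$ the ideal of continuous functions vanishing at the identity is weakly dense in $\Linf(G)$ yet proper. So the passage from the von Neumann-level density to the norm-level equality is exactly what still has to be proved. The paper does this by a second, separate computation: by left-invariance, $\sB\,\C_0(\GG)$ contains all elements $\bigl((\omega\otimes\id)(\ww^\GG(b\otimes\I){\ww^\GG}^*)\bigr)a$; replacing $\omega$ by $c\cdot\omega$ with $c\in\C_0(\hh\GG)$ and absorbing ${\ww^\GG}^*$ one reduces to the norm-closed span of $(\omega\otimes\id)(\ww^\GG(ba\otimes\I))\,\C_0(\GG)$, and since $\sB\,\C_0(\GG)$ generates $\Linf(\GG)$ as a von Neumann algebra (the first step), these slices of $\ww^\GG$ already span a dense subspace of $\C_0(\GG)$, giving $\sB\,\C_0(\GG)\supseteq\C_0(\GG)\,\C_0(\GG)=\C_0(\GG)$. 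Your proposal is missing this argument (and your ``alternative slicker route'' paragraph is too vague to substitute for it), so as it stands the proof is incomplete.
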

\begin{proof}
Let us define \[\sX=\textrm{span}\bigl\{b a\st b\in \sB, a\in \C_0(\GG) \bigr\}^{\text{\tiny{$\sigma$-weak cls}}}.\]  $\sX$ is a $\sigma$-weakly closed right ideal of $\Linf(\GG)$, so there exists a projection $p\in\Linf(\GG)$ such that $\sX=p\Linf(\GG)$.

Let $b\in \sB$ and $a\in\C_0(\GG)$. We shall prove that for all $\mu\in\B(\Ltwo(\GG))_*$, $(\mu\otimes\id)(\Delta_\GG(ba))\in \sX$.  It suffices to  check the latter for  $\mu = c\cdot \omega$ where $c\in\C_0(\GG)$ and $\omega\in\B(\Ltwo(\GG))_*$. Let us note that 
\[ 
(c\cdot \omega\otimes\id)(\Delta_\GG(ba))   = (\omega\otimes \id)(\Delta_\GG(b)\Delta_\GG(a)(c\otimes\I)) 
\]
and 
\[(\omega\otimes \id)(\Delta_\GG(b)\Delta_\GG(a)(c\otimes\I))\in\big\{(\mu\otimes\id)(\Delta_\GG(b))|b\in\sB, \mu\in\C_0(\GG)^*\big\}^{\text{\tiny{norm-cls}}}\C_0(\GG)\subset\sX.\] 
Thus we conclude that $\Delta_\GG(\sX)\subset \Linf(\GG)\vtens\sX$.  In particular  $\Delta_\GG(p)\leq \I\otimes p$. Using  \cite[Lemma 6.4]{KV} we get $p=\I$ and $\sX=\Linf(\GG)$. Therefore
\begin{align*}
\sB \C_0(\GG) &\supseteq \bigl\{(\omega\otimes\id)(\ww^\GG(b\otimes\I){\ww^\GG}^*)a\st \omega\in B(\Ltwo(\GG))_*, b\in\sB, a\in \C_0(\GG)\bigr\}^{\text{\tiny{norm-cls}}}\\
&=  \bigl\{(c\cdot\omega\otimes\id)(\ww^\GG(b\otimes\I){\ww^\GG}^*)a\st \omega\in B(\Ltwo(\GG))_*, b\in\sB, a\in\C_0(\GG), c\in \C_0(\hh\GG)\bigr\}^{\text{\tiny{norm-cls}}}\\
&= \bigl\{(\omega\otimes\id)(\ww^\GG(b\otimes\I))\st \omega\in B(\Ltwo(\GG))_*, b\in\sB \bigr\}^{\text{\tiny{norm-cls}}}\C_0(\GG)\\&= \bigl\{(\omega\otimes\id)(\ww^\GG(ba\otimes\I))\st \omega\in B(\Ltwo(\GG))_*, b\in\sB, a\in \C_0(\GG)\bigr\}^{\text{\tiny{norm-cls}}} \C_0(\GG)\\&= \C_0(\GG) \C_0(\GG)   = \C_0(\GG)  
\end{align*}
where  the second last equality holds because the von Neumann algebra generated by $\sB \C_0(\GG)$ is $\Linf(\GG)$.
\end{proof}
In the next proposition we will prove the counterpart of Proposition \ref{streg} for $\C^*$-subalgebras of $\M(\C_0^u(\GG))$.
\begin{proposition}\label{streg1}
Let $\sB$ be a left-invariant  $\C^*$-subalgebra of $\M(\C^u_0(\GG))$ such that $\Lambda_\GG(\sB)\neq 0$.  Then  $\sB$ is non-degenerate, $\sB \C^u_0(\GG) = \C^u_0(\GG)$. 
\end{proposition}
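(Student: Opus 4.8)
The plan is to push $\sB$ down to the reduced level by applying the reducing morphism $\Lambda_\GG$, invoke Proposition \ref{streg} there, and then lift the resulting non-degeneracy back to the universal level by means of the half-lifted comultiplication $\Delta_r^{r,u}$ together with its Podle\'s condition. The hypothesis $\Lambda_\GG(\sB)\neq 0$ is precisely what makes the first step legitimate.

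First I would check that $\Lambda_\GG(\sB)$ is a non-zero left-invariant $\C^*$-subalgebra of $\M(\C_0(\GG))$: it is a $\C^*$-subalgebra because it is the image of the $\C^*$-algebra $\sB$ under the $*$-homomorphism $\Lambda_\GG$ (extended to multipliers), it is non-zero by assumption, and applying $\id\otimes\Lambda_\GG$ to $\Delta_r^{r,u}(x)=\wW^\GG(x\otimes\I){\wW^\GG}^*$ and using $(\Lambda_{\hh\GG}\otimes\Lambda_\GG)(\WW^\GG)=\ww^\GG$ yields $(\id\otimes\Lambda_\GG)\circ\Delta_r^{r,u}=\Delta_\GG$; combining this with \eqref{LDDrL} gives, for $b\in\sB$ and $\mu\in\C_0(\GG)^*$,
\[
(\mu\otimes\id)\Delta_\GG(\Lambda_\GG(b))=\Lambda_\GG\bigl(((\mu\circ\Lambda_\GG)\otimes\id)\Delta_\GG^u(b)\bigr)\in\Lambda_\GG(\sB),
\]
so $\Lambda_\GG(\sB)$ is left-invariant. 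Proposition \ref{streg} then gives $\Lambda_\GG(\sB)\,\C_0(\GG)=\C_0(\GG)$.

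Now set $\sJ:=\sB\,\C^u_0(\GG)$ (the closed linear span of the products), a closed subspace of $\C^u_0(\GG)$; the goal is $\sJ=\C^u_0(\GG)$. On the one hand, combining the Podle\'s condition $\Delta_r^{r,u}(\C_0(\GG))(\C_0(\GG)\otimes\I)=\C_0(\GG)\otimes\C^u_0(\GG)$ with the identity $\C_0(\GG)=\Lambda_\GG(\sB)\,\C_0(\GG)$ obtained above and the fact that $\Delta_r^{r,u}$ is a morphism, one gets $\C_0(\GG)\otimes\C^u_0(\GG)=\Delta_r^{r,u}(\Lambda_\GG(\sB))\,(\C_0(\GG)\otimes\C^u_0(\GG))$. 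On the other hand, I would show that every element $\Delta_r^{r,u}(\Lambda_\GG(b))(x\otimes a)$, with $b\in\sB$, $x\in\C_0(\GG)$, $a\in\C^u_0(\GG)$, lies in $\C_0(\GG)\otimes\sJ$. It lies in $\C_0(\GG)\otimes\C^u_0(\GG)$ because, approximating $\Lambda_\GG(b)\in\M(\C_0(\GG))$ strictly by $\Lambda_\GG(b)f_j$ with $(f_j)$ an approximate unit of $\C_0(\GG)$, strict continuity of $\Delta_r^{r,u}$ and the Podle\'s condition give $\Delta_r^{r,u}(\Lambda_\GG(b)f_j)(x\otimes a)\in\C_0(\GG)\otimes\C^u_0(\GG)$, and these converge in norm to $\Delta_r^{r,u}(\Lambda_\GG(b))(x\otimes a)$; moreover, for $\nu\in\C_0(\GG)^*$ one computes, via \eqref{LDDrL},
\[
(\nu\otimes\id)\bigl(\Delta_r^{r,u}(\Lambda_\GG(b))(x\otimes a)\bigr)=\bigl((\nu_x\circ\Lambda_\GG)\otimes\id\bigr)(\Delta_\GG^u(b))\cdot a,\qquad \nu_x(y):=\nu(yx),
\]
which by left-invariance of $\sB$ belongs to $\sB\cdot a\subset\sJ$. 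Since for a closed subspace $\mathsf{F}\subset\C^u_0(\GG)$ one has $\C_0(\GG)\otimes\mathsf{F}=\{z\in\C_0(\GG)\otimes\C^u_0(\GG):(\nu\otimes\id)(z)\in\mathsf{F}\ \text{for all}\ \nu\in\C_0(\GG)^*\}$, this forces $\Delta_r^{r,u}(\Lambda_\GG(b))(x\otimes a)\in\C_0(\GG)\otimes\sJ$. Comparing the two descriptions gives $\C_0(\GG)\otimes\C^u_0(\GG)=\C_0(\GG)\otimes\sJ$, and slicing the first leg yields $\sJ=\C^u_0(\GG)$, that is, $\sB\,\C^u_0(\GG)=\C^u_0(\GG)$.

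The step I expect to be the main obstacle is this return trip from the reduced to the universal level: one must make sure that the products $\Delta_r^{r,u}(\Lambda_\GG(b))(x\otimes a)$ genuinely sit inside the minimal tensor product $\C_0(\GG)\otimes\C^u_0(\GG)$ (rather than merely in the multiplier algebra), so that the slice-map property can be applied, and one must track carefully that all first-leg slices land in $\sJ$; everything else is routine manipulation with Podle\'s conditions, approximate units, and the reducing morphism.
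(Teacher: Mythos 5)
Your first step (pushing $\sB$ down to $\sD=\Lambda_\GG(\sB)$, checking left-invariance via \eqref{LDDrL}, and invoking Proposition \ref{streg} to get $\sD\,\C_0(\GG)=\C_0(\GG)$) is exactly what the paper does, and your computation of the first-leg slices $(\nu\otimes\id)\bigl(\Delta_r^{r,u}(\Lambda_\GG(b))(x\otimes a)\bigr)=((\nu_x\circ\Lambda_\GG)\otimes\id)(\Delta_\GG^u(b))\,a\in\sB\,\C_0^u(\GG)$ is correct. The genuine gap is the principle you lean on afterwards: the claim that for an \emph{arbitrary} closed subspace $\mathsf{F}\subset\C_0^u(\GG)$ one has $\C_0(\GG)\otimes\mathsf{F}=\{z\in\C_0(\GG)\otimes\C_0^u(\GG)\st(\nu\otimes\id)(z)\in\mathsf{F}\ \text{for all}\ \nu\in\C_0(\GG)^*\}$. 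This is the slice-map (Fubini product) problem, and the equality is false in general: it holds for all closed subspaces only under approximation hypotheses on the first factor (e.g.\ the operator approximation property), and it can already fail for closed two-sided ideals --- which are in particular right ideals like your $\sJ$ --- whenever the first factor is not exact, since then $\{z\st(\nu\otimes\id)(z)\in J\ \forall\nu\}=\ker(\id\otimes q_J)$ may strictly contain $\C_0(\GG)\otimes J$. No exactness, nuclearity or OAP assumption is available here: $\C_0(\GG)$ can be, say, $\C^*_r(\Gamma)$ for a non-exact discrete group $\Gamma$. So the step ``all slices lie in $\sJ$, hence the element lies in $\C_0(\GG)\otimes\sJ$'' is unjustified; contrary to what you flag as the main obstacle, membership in the minimal tensor product is the harmless part, and the slice-map property itself is the problematic one.

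The good news is that your argument does not need that principle at all. From your identity $\C_0(\GG)\otimes\C_0^u(\GG)=\Delta_r^{r,u}(\sD)\bigl(\C_0(\GG)\otimes\C_0^u(\GG)\bigr)$ (closed span), norm continuity of the slice maps $\nu\otimes\id$ and closedness of $\sJ$, your slice computation already gives $(\nu\otimes\id)(z)\in\sJ$ for \emph{every} $z\in\C_0(\GG)\otimes\C_0^u(\GG)$ and every $\nu\in\C_0(\GG)^*$; applying this to an elementary tensor $z=x\otimes y$ with $\nu(x)=1$ yields $y\in\sJ$, i.e.\ $\sB\,\C_0^u(\GG)=\C_0^u(\GG)$, with no tensor-product membership question ever arising. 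This repaired argument is essentially the paper's: the paper also avoids any Fubini-type statement by staying at the level of slices, showing directly that $\sB\,\C_0^u(\GG)$ contains $\bigl\{(\omega\otimes\id)(\Delta_r^{r,u}(d))\,a\st \omega\in B(\Ltwo(\GG))_*,\,d\in\sD,\,a\in\C_0^u(\GG)\bigr\}$ and then computing that this set is all of $\C_0^u(\GG)$ using $\sD\,\C_0(\GG)=\C_0(\GG)$ and the fact that slices of $\wW^\GG(y\otimes\I)$, $y\in\C_0(\GG)$, span $\C_0^u(\GG)$ densely.
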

\begin{proof}
Let us consider $\sD = \Lambda_\GG(\sB)\subset\M(\C_0(\GG))$. Then it is easy to check that $\sD$ satisfies the assumptions of Proposition \ref{streg}. In particular $\sD \C_0(\GG) = \C_0(\GG)$. We compute 
\begin{align*}
\sB \C^u_0(\GG) &\supseteq \bigl\{(\omega\otimes\id)(\wW^\GG(d\otimes\I){\wW^\GG}^*)a\st \omega\in B(\Ltwo(\GG))_*, d\in\sD, a\in \C^u_0(\GG)\bigr\}^{\text{\tiny{norm-cls}}}\\
&=  \bigl\{(c\cdot\omega\otimes\id)(\wW^\GG(d\otimes\I){\wW^\GG}^*)a\st \omega\in B(\Ltwo(\GG))_*, d\in\sD, a\in\C^u_0(\GG), c\in \C_0(\hh\GG)\bigr\}^{\text{\tiny{norm-cls}}}\\
&= \bigl\{(\omega\otimes\id)(\wW^\GG(d\otimes\I))\st \omega\in B(\Ltwo(\GG))_*, d\in\sD \bigr\}^{\text{\tiny{norm-cls}}}\C^u_0(\GG)\\&= \bigl\{(\omega\otimes\id)(\wW^\GG(da\otimes\I))\st \omega\in B(\Ltwo(\GG))_*, d\in\sD, a\in \C_0(\GG)\bigr\}^{\text{\tiny{norm-cls}}} \C^u_0(\GG)\\&= \C^u_0(\GG)\C^u_0(\GG)   = \C^u_0(\GG)  
\end{align*}
where in the second last equality we used $\sD\C_0(\GG) = \C_0(\GG)$. 
\end{proof}
Let us emphasize that the assumption $\Lambda_\GG(\sB)\neq 0$ in Proposition \ref{streg1} is essential. In order to see the relevant example let us consider a non-coamenable locally compact quantum group $\GG$ and $\sB = \ker\Lambda_\GG\subset\C_0^u(\GG)$. Then $\sB\subset\C_0^u(\GG)$ is a $\C^*$-algebra which is left-invariant. Indeed   for all $b\in\sB$ and $\mu\in\C_0^u(\GG)^*$ we have 
\[
\begin{split}
\Lambda_\GG((\mu\otimes\id)(\Delta^u_\GG(b))) &= (\mu\otimes\id)\big((\id\otimes\Lambda_\GG)\Delta^u_\GG(b)\big)\\
&= (\mu\otimes\id)( \Delta^{u,r}_r (\Lambda_\GG(b))) = 0
\end{split}
\] thus $(\mu\otimes\id)(\Delta^u_\GG(b))\in\sB$. But $\sB$ being an ideal in $\C_0^u(\GG)$ cannot be non-degenerate.

Let $\omega\in S(\C_0^u(\GG))$ be  an idempotent state  on $\GG$. It  yields a pair of conditional expectations $E^u:\C_0^u(\GG)\to\C_0^u(\GG)$ and $E:\C_0(\GG)\to\C_0(\GG)$ and $\C^*$-subalgebras $ \sX^u=E^u(\C_0^u(\GG))\subset\C_0^u(\GG)$ and  $\sX =E (\C_0 (\GG))\subset\C_0 (\GG)$ satisfying Podle\'s conditions. Note that $\Lambda_\GG(\sX^u) = \sX$. In what follows we shall analyze the passage from the reduced level to the universal level in a more general context of a $\C^*$-subalgebra  $\sB\subset\M(\C_0(\GG))$ satisfying (weak) Podle\'s condition. In order to do this we need preparatory results. 

Let us recall that $\GG$ is a    regular quantum group if and only if \eqref{reg} is satisfied. We shall  show that equality \eqref{reg} implies apparently stronger condition with  "half-lifted" multiplicative unitary.
\begin{lemma}\label{huni}
A locally compact quantum group $\GG$ is regular if and only if 
\[
\C_0(\hh\GG)\otimes \C_0^u(\GG)=\bigl\{(x\otimes \I)\wW^\GG(\I\otimes y)\st x\in \C_0(\hh\GG), y\in \C_0^u(\GG)\bigr\}^{\text{\tiny{norm-cls}}}.
\]
\end{lemma}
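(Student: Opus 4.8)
The plan is to prove the two implications by rather different means.

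The implication ``displayed identity $\implies$ $\GG$ regular'' I expect to be purely formal: I would apply the surjective morphism $\id\otimes\Lambda_\GG\in\Mor\bigl(\C_0(\hh\GG)\otimes\C_0^u(\GG),\C_0(\hh\GG)\otimes\C_0(\GG)\bigr)$ to both sides of the assumed equality. Since $\Lambda_\GG$ is unital, contractive and onto, and $(\id\otimes\Lambda_\GG)(\wW^\GG)=\ww^\GG$ (as $\wW^\GG=(\Lambda_{\hh\GG}\otimes\id)(\WW^\GG)$ and $(\Lambda_{\hh\GG}\otimes\Lambda_\GG)(\WW^\GG)=\ww^\GG$), the left--hand side is carried \emph{onto} $\C_0(\hh\GG)\otimes\C_0(\GG)$ while the right--hand side is carried \emph{into} $\{(x\otimes\I)\ww^\GG(\I\otimes z)\st x\in\C_0(\hh\GG),z\in\C_0(\GG)\}^{\textrm{\tiny{norm-cls}}}$ (using $\Lambda_\GG(\C_0^u(\GG))=\C_0(\GG)$). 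As the latter set always lies inside $\C_0(\hh\GG)\otimes\C_0(\GG)$, equality \eqref{reg} follows, i.e.\ $\GG$ is regular.

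For the converse I would assume \eqref{reg}. The easy half of this direction is the inclusion $\{(x\otimes\I)\wW^\GG(\I\otimes y)\st x\in\C_0(\hh\GG),y\in\C_0^u(\GG)\}^{\textrm{\tiny{norm-cls}}}\subseteq\C_0(\hh\GG)\otimes\C_0^u(\GG)$, which is the standard property $(A\otimes\I)\M(A\otimes B)(\I\otimes B)\subseteq A\otimes B$ of minimal tensor products. For the reverse inclusion I would transfer \eqref{reg} to the half--lifted level using the half--lifted pentagon $\ww^\GG_{12}\wW^\GG_{13}\wW^\GG_{23}=\wW^\GG_{23}\ww^\GG_{12}$, which is equivalent both to the corepresentation identity $(\Delta_{\hh\GG}\otimes\id)(\wW^\GG)=\wW^\GG_{13}\wW^\GG_{23}$ and to the covariance relation $(\id\otimes\Delta_r^{r,u})(\ww^\GG)=\ww^\GG_{12}\wW^\GG_{13}$ for the half--lifted comultiplication; the latter satisfies the Podle\'s condition $\Delta_r^{r,u}(\C_0(\GG))(\C_0(\GG)\otimes\I)=\C_0(\GG)\otimes\C_0^u(\GG)$, which is exactly what produces the universal leg out of the reduced picture. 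Concretely, working in a triple legged algebra I would: read \eqref{reg} in the first two legs; use \eqref{reg} to replace the generic $\ww^\GG$--sandwich $(x\otimes\I)\ww^\GG(\I\otimes z)$ by an \emph{arbitrary} element of $\C_0(\hh\GG)\otimes\C_0(\GG)$; saturate the (reduced) middle leg by the Podle\'s condition for $\Delta_r^{r,u}$ so as to install a $\wW^\GG$ joining the first leg to the third; and finally collapse the middle leg by slicing with a state of value $1$ on the unit, so that the first and third legs decouple into $\C_0(\hh\GG)\otimes\C_0^u(\GG)$.

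The routine parts are the first implication and the inclusion ``$\subseteq$'' above. I expect the main obstacle to be the bookkeeping in the converse: one must arrange the order of operations so that regularity \eqref{reg} genuinely contributes, because a careless application of $\Delta_{\hh\GG}$ or of $\Delta_r^{r,u}$ directly to \eqref{reg} only reproduces the definition of the right--hand side (these maps being injective $*$--homomorphisms). Keeping straight, throughout the leg juggling, which tensor factor carries the reduced dual, which carries the reduced algebra and which carries the universal algebra — and checking that the final slice does not destroy the norm--closed span equality — is where the real work lies.
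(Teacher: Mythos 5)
Your proposal takes essentially the same route as the paper: the direction ``identity $\implies$ regular'' via $\id\otimes\Lambda_\GG$ (the paper dismisses it as trivial), and for the converse exactly the paper's three ingredients — applying $(\id\otimes\Delta_r^{r,u})$ to \eqref{reg} via the covariance relation $(\id\otimes\Delta_r^{r,u})(\ww^\GG)=\ww^\GG_{12}\wW^\GG_{13}$, using the Podle\'s condition for $\Delta_r^{r,u}$ to create the universal third leg, invoking \eqref{reg} once more to replace the span of $(x\otimes\I)\ww^\GG(\I\otimes e)$ in legs $1,2$ by all of $\C_0(\hh\GG)\otimes\C_0(\GG)$, and then slicing away the middle leg. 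The only cosmetic discrepancy is your ``slice with a state of value $1$ on the unit'': at that stage the middle leg carries an arbitrary $a\in\C_0(\GG)$ rather than $\I$, so one slices with arbitrary $\mu\in\C_0(\GG)^*$ and lets the scalar $\mu(a)$ sweep out the closed span, which is how the paper closes the argument.
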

\begin{proof}
The "if" part is trivial. To prove the "only if", assume $\GG$ is a regular locally compact quantum group. 
Applying $(\id\otimes\Delta_r^{r,u})$ to \eqref{reg} we obtain
\begin{equation}\begin{split}\label{uni}
\C_0(\hh\GG)\otimes\Delta_r^{r,u}(\C_0(\GG)) =\bigl\{(x\otimes\I\otimes\I)\ww^\GG_{12}\wW^\GG_{13}(\I\otimes\Delta_r^{r,u}(y))~|~ x\in \C_0(\hh\GG), y\in \C_0(\GG)\bigr\}^{\text{\tiny{norm-cls}}}.
\end{split}
\end{equation}
Since  $\Delta_r^{r,u}$ satisfies Podle\'s condition,  slicing the second leg of \eqref{uni} we get 
\begin{align*}
\C_0(\hh\GG)&\otimes \C_0^u(\GG)=\\
&= \bigl\{(\id\otimes\mu\otimes\id)(x\otimes\I\otimes\I)\ww^\GG_{12}\wW^\GG_{13}(\I\otimes \Delta_r^{r,u}(c))(\I\otimes d\otimes\I))\st \\&   \quad\quad \quad \quad\quad\quad \quad \quad \quad\quad \quad \quad 
\mu\in\C_0(\GG)^*, x\in\C_0(\hh\GG), c,d\in\C_0(\GG)\bigr\}^{\text{\tiny{norm-cls}}}\\
&= \bigl\{(\id\otimes\mu\otimes\id)(x\otimes\I\otimes\I)\ww^\GG_{12}\wW^\GG_{13}(\I\otimes e\otimes y)\st\\&   \quad\quad \quad \quad\quad\quad \quad \quad \quad\quad \quad \quad 
\mu\in\C_0(\GG)^*, x\in\C_0(\hh\GG), e\in\C_0(\GG), y\in\C_0^u(\GG)\bigr\}^{\text{\tiny{norm-cls}}} \\
&=  \bigl\{(\id\otimes\mu\otimes\id)\bigr((x\otimes \I)\ww^\GG(1\otimes e)\otimes\I\bigl)\wW^\GG_{13}(\I\otimes\I\otimes y)\st\\&  \quad\quad \quad \quad\quad\quad \quad \quad \quad\quad \quad \quad  
\mu\in\C_0(\GG)^*, x\in\C_0(\hh\GG), e\in\C_0(\GG), y\in\C_0^u(\GG) \bigr\}^{\text{\tiny{norm-cls}}}\\
& =   \bigl\{(\id\otimes\mu\otimes\id)(x\otimes a\otimes \I)\wW^\GG_{13}(\I\otimes\I\otimes y)\st\\&   \quad\quad \quad \quad\quad\quad \quad \quad \quad\quad \quad \quad 
\mu\in\C_0(\GG)^*, x\in\C_0(\hh\GG), a\in\C_0(\GG), y\in\C_0^u(\GG) \bigr\}^{\text{\tiny{norm-cls}}} \\
&=  \bigl\{ (x\otimes \I)\wW^\GG(\I\otimes y)\st x\in\C_0(\hh\GG), y\in\C_0^u(\GG)\bigr\}^{\text{\tiny{norm-cls}}}.
\end{align*}
\end{proof}
We shall show that  yet stronger condition   with the universal bicharacter $\WW^\GG$ is also implied by \eqref{reg}. 
\begin{lemma}
A locally compact quantum group $\GG$ is regular if and only if 
\[\C^u_0(\hh\GG)\otimes \C_0^u(\GG)=\bigl\{(x\otimes \I)\WW^\GG(\I\otimes y)\st x\in \C^u_0(\hh\GG), y\in \C_0^u(\GG)\bigr\}^{\text{\tiny{norm-cls}}}.
\]
\end{lemma}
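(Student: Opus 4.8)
The plan is to bootstrap from Lemma \ref{huni}, which already handles the case where the first leg lives in $\C_0(\hh\GG)$ and the second leg in $\C_0^u(\GG)$. We must now lift the \emph{first} leg to $\C_0^u(\hh\GG)$. The natural tool is the half-lifted comultiplication on the dual side, $\hh{\Delta}_r^{r,u}\in\Mor(\C_0(\hh\GG),\C_0(\hh\GG)\otimes\C_0^u(\hh\GG))$, which satisfies Podle\'s condition $\hh{\Delta}_r^{r,u}(\C_0(\hh\GG))(\C_0(\hh\GG)\otimes\I)=\C_0(\hh\GG)\otimes\C_0^u(\hh\GG)$, together with the pentagon-type relation linking $\wW^\GG$ (or $\Ww^\GG$) with $\WW^\GG$. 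The "if" direction is again trivial, since applying $\Lambda_{\hh\GG}\otimes\id$ to the claimed equality recovers \eqref{reg} (in the $\C_0^u(\GG)$-slice form of Lemma \ref{huni}, and then a further $\id\otimes\Lambda_\GG$ gives \eqref{reg} itself).

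For the "only if" direction, assuming $\GG$ regular, I would start from the conclusion of Lemma \ref{huni},
\[
\C_0(\hh\GG)\otimes \C_0^u(\GG)=\bigl\{(x\otimes \I)\wW^\GG(\I\otimes y)\st x\in \C_0(\hh\GG),\, y\in \C_0^u(\GG)\bigr\}^{\text{\tiny{norm-cls}}},
\]
and apply $\hh{\Delta}_r^{r,u}\otimes\id$ to both sides. On the left this produces $\hh{\Delta}_r^{r,u}(\C_0(\hh\GG))\otimes\C_0^u(\GG)$. On the right, since $\hh{\Delta}_r^{r,u}(x)=\wW^{\hh\GG}(x\otimes\I){\wW^{\hh\GG}}^*$ is implemented by a half-lifted unitary on the dual, one rewrites $(\hh{\Delta}_r^{r,u}\otimes\id)\bigl((x\otimes\I)\wW^\GG(\I\otimes y)\bigr)$ using the appropriate intertwining identity between $\wW^{\hh\GG}_{12}$ and $\wW^\GG_{13},\WW^\GG_{23}$ (a pentagon relation, reflected through $\ww^{\hh\GG}=\flip({\ww^\GG})^*$ and its lifts). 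This should express the slice in terms of $\WW^\GG_{23}$ sitting next to $\hh{\Delta}_r^{r,u}(x)$-type factors in the first two legs. Then I would slice the \emph{second} leg against functionals $\mu\in\C_0^u(\hh\GG)^*$, exactly as in the proof of Lemma \ref{huni}: Podle\'s condition for $\hh{\Delta}_r^{r,u}$ lets one absorb $\hh{\Delta}_r^{r,u}(c)(\I\otimes d)$-type terms and replace the second leg by an arbitrary element of $\C_0^u(\hh\GG)$, collapsing the triple-leg expression back down to $\bigl\{(x'\otimes\I)\WW^\GG(\I\otimes y)\bigr\}^{\text{\tiny{norm-cls}}}$ with $x'\in\C_0^u(\hh\GG)$.

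The main obstacle, and the step that needs to be executed with care rather than hand-waved, is getting the correct pentagonal identity relating the dual half-lifted unitary $\wW^{\hh\GG}$ (which implements $\hh{\Delta}_r^{r,u}$) to the legs of $\wW^\GG$ and $\WW^\GG$, with all the reflections ($\flip$, adjoints, which legs are universal vs.\ reduced) tracked correctly. Concretely one wants an identity of the shape $\wW^{\hh\GG}_{12}\,\wW^\GG_{13}=\WW^\GG_{23}\,(\text{something in legs }1,3)\,{\WW^\GG_{23}}^{*}$-ish, derived from the half-lifted pentagon equations $\ww^\GG_{12}\wW^\GG_{13}\wW^\GG_{23}=\wW^\GG_{23}\ww^\GG_{12}$ and its dual, so that conjugation by $\wW^{\hh\GG}_{12}$ on the first two legs effects a multiplication by $\WW^\GG$ in legs $2,3$. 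Once that identity is in hand, the rest is a bookkeeping argument in norm-closed-linear-span manipulations of exactly the same flavour as the displayed computation in the proof of Lemma \ref{huni}, using Podle\'s condition for $\hh{\Delta}_r^{r,u}$ to do the final slice. A cleaner alternative worth considering is to invoke the already-proven Lemma \ref{huni} applied to $\hh\GG$ in place of $\GG$ (valid since $\GG$ regular $\iff$ $\hh\GG$ regular) to lift the first leg directly, then reconcile the two half-lifts into the fully universal $\WW^\GG$; this may shorten the argument but requires knowing that the two one-sided lifts are compatible, which is again the same pentagon bookkeeping.
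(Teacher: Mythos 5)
Your plan is essentially the paper's own proof: the paper applies $\hh\Delta_r^{r,u}\otimes\id$ to the conclusion of Lemma \ref{huni}, uses the half-lifted bicharacter identity $(\hh\Delta_r^{r,u}\otimes\id)(\wW^\GG)=\WW^\GG_{23}\wW^\GG_{13}$ (exactly your observation that conjugating by $\wW^{\hh\GG}_{12}$ effects a left multiplication by $\WW^\GG_{23}$), absorbs $(a\otimes\I)\hh\Delta_r^{r,u}(b)$ into $\C_0(\hh\GG)\otimes\C_0^u(\hh\GG)$ via Podle\'s condition for $\hh\Delta_r^{r,u}$, removes the residual $\wW^\GG_{13}$ by a second appeal to Lemma \ref{huni}, and then slices. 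The only slips in your sketch are notational rather than substantive: the final slice is taken against the \emph{first} (reduced dual) leg with $\mu\in\B(\Ltwo(\GG))_*$, so that the universal leg $\C_0^u(\hh\GG)$ survives (not ``the second leg against $\mu\in\C_0^u(\hh\GG)^*$'', which would kill precisely the leg you need), and the displayed ``shape'' $\wW^{\hh\GG}_{12}\wW^\GG_{13}=\WW^\GG_{23}(\cdots){\WW^\GG_{23}}^*$ is not the right form of the intertwining relation, the correct one being $\wW^{\hh\GG}_{12}\wW^\GG_{13}{\wW^{\hh\GG}_{12}}^*=\WW^\GG_{23}\wW^\GG_{13}$, which matches your verbal description.
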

\begin{proof}
The "if" part is trivial. In order to prove the "only if"  assume $\GG$ is a regular. Lemma \ref{huni} yields 
\begin{equation}\label{haluni} 
\C_0(\hh\GG)\otimes \C_0^u(\GG)=\bigl\{(x\otimes \I)\wW^\GG(\I\otimes y)\st x\in \C_0(\hh\GG), y\in \C_0^u(\GG)\bigr\}^{\text{\tiny{norm-cls}}}.
\end{equation}
Applying $(\hh\Delta_r^{r,u}\otimes\id)$ to \eqref{haluni} we get 
\begin{equation}\label{unireg}
\hh\Delta_r^{r,u}(\C_0(\hh\GG))\otimes \C^u_0(\GG) =\bigl\{(\hh\Delta_r^{r,u}(x)\otimes\I)\WW^\GG_{23}\wW^\GG_{13}(\I\otimes\I\otimes  y)\st x\in \C_0(\hh\GG), y\in \C_0^u(\GG)\bigr\}^{\text{\tiny{norm-cls}}}.
\end{equation}
Since  $\hh\Delta_r^{r,u}$ satisfies Podle\'s condition, by slicing the first leg of \eqref{unireg} we get 
\begin{align*}
\C^u_0(\hh\GG)\otimes \C_0^u(\GG)&=
\bigl\{(\mu\otimes \id \otimes\id)(\hh\Delta_r^{r,u}(x)\otimes\I)\WW^\GG_{23}\wW^\GG_{13}(\I\otimes\I\otimes  y)\st\\&  
 \quad \quad\quad\quad \quad \quad \quad\quad \quad \quad 
\mu\in\B(\Ltwo(\GG))_*, x\in \C_0(\hh\GG), y\in \C^u_0(\GG)\bigr\}^{\text{\tiny{norm-cls}}}\\
&=\bigl\{(\mu\otimes \id \otimes\id)(((a\otimes\I)\hh\Delta_r^{r,u}(b))\otimes\I)\WW^\GG_{23}\wW^\GG_{13}(\I\otimes\I\otimes  y)~|\\&  
 \quad\quad \quad\quad \quad \quad \quad\quad \quad \quad 
\mu\in\B(\Ltwo(\GG))_*, a,b\in \C_0(\hh\GG), y\in \C^u_0(\GG)\bigr\}^{\text{\tiny{norm-cls}}}\\
&=\bigl\{(\mu\otimes \id \otimes\id)(a\otimes b\otimes\I)\WW^\GG_{23}\wW^\GG_{13}(\I\otimes\I\otimes  y)\st \\&  
 \quad\quad \quad\quad \quad \quad \quad\quad \quad \quad 
\mu\in\B(\Ltwo(\GG))_*, a\in \C_0(\hh\GG),b\in \C^u_0(\hh\GG), y\in \C_0^u(\GG)\bigr\}^{\text{\tiny{norm-cls}}}\\
&=\bigl\{(\mu\otimes \id \otimes\id)(a\otimes b\otimes\I)\WW^\GG_{23} (\I\otimes\I\otimes  y)~|\\&  
 \quad\quad \quad \quad \quad \quad \quad\quad \quad \quad 
\mu\in\B(\Ltwo(\GG))_*, a\in \C_0(\hh\GG),b\in \C^u_0(\hh\GG), y\in\C_0^u(\GG)\bigr\}^{\text{\tiny{norm-cls}}} \\
&=\bigl\{ (x\otimes\I)\WW^\GG (\I\otimes  y)\st  x\in \C^u_0(\hh\GG), y\in \C_0^u(\GG)\bigr\}^{\text{\tiny{norm-cls}}} 
\end{align*}
where in the fourth  equality we used Lemma \ref{huni}. 
\end{proof}
Let us  prove an auxiliary lemma.
\begin{lemma}\label{weakPodreg}
Let $\GG$ be a regular locally compact quantum group, $\sD$   a $\C^*$-algebra and $\alpha\in\Mor(\sD,\C^u_0(\GG)\otimes\sD)$  a morphism satisfying  
\[(\id\otimes\alpha)\circ\alpha = (\Delta_\GG^u\otimes\id)\circ\alpha \] If $\sD$ satisfies $\alpha$-weak Podle\'s condition 
then it satisfies $\alpha$-Podle\'s condition.
\end{lemma}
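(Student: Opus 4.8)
Only the implication ``$\alpha$-weak Podle\'s condition $\Rightarrow$ $\alpha$-Podle\'s condition'' is at stake, the converse being recorded in Definition~\ref{def:pod_cond1}. The plan is to reproduce the computation from the proof of Proposition~\ref{weak} at the universal level, the essential new ingredient being the regularity of $\GG$. Because $\Delta_\GG^u$ is not spatially implemented, one cannot transplant that computation verbatim; instead I would work through the half-lifted objects. Writing $\alpha_r:=(\Lambda_\GG\otimes\id)\circ\alpha$, \eqref{LDDrL} together with $\Delta_r^{r,u}(x)=\wW^\GG(x\otimes\I){\wW^\GG}^{*}$ yields, on $\sD$, the identity $(\id\otimes\alpha)\circ\alpha_r=(\Delta_r^{r,u}\otimes\id)\circ\alpha_r$, that is, $(\id\otimes\alpha)\alpha_r(d)=\wW^\GG_{12}\,\alpha_r(d)_{13}\,{\wW^\GG}^{*}_{12}$ for $d\in\sD$, the left leg of the right-hand side collapsing into $\C_0(\GG)$.

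The regularity of $\GG$ enters only through the density
\[
\bigl[(\C_0(\hh\GG)\otimes\I)\,\wW^\GG\bigr]^{\text{\tiny{norm-cls}}}=\C_0(\hh\GG)\otimes\C_0^u(\GG),
\]
whose nontrivial inclusion is immediate from Lemma~\ref{huni}: the left-hand side contains $\bigl[(x\otimes\I)\wW^\GG(\I\otimes y)\bigr]^{\text{\tiny{norm-cls}}}$, which equals $\C_0(\hh\GG)\otimes\C_0^u(\GG)$ precisely when $\GG$ is regular. (Adjoint and reflected forms of this density, all flowing from Lemma~\ref{huni}, are used for the same purpose.) This is exactly what compensates, in the present one-sided situation, for the second factor $\C_0^u(\GG)\otimes\I$ available in Proposition~\ref{weak}: there a term $(y\otimes x\otimes\I)\wW^\GG_{12}$ can be absorbed using only $\wW^\GG\in\M(\C_0(\hh\GG)\otimes\C_0^u(\GG))$, the element $x\in\C_0^u(\GG)$ being supplied by the right-hand factor, whereas here one meets $(y\otimes\I\otimes\I)\wW^\GG_{12}$ and must invoke the displayed equality.

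The computation itself is a one-sided variant of the proof of Proposition~\ref{weak}. First I would check the universal form of that proposition, $(\C_0^u(\GG)\otimes\I)\,\alpha(\sD)\,(\C_0^u(\GG)\otimes\I)=\C_0^u(\GG)\otimes\sD$ — the same computation carried out with $\wW^\GG,\Delta_r^{r,u},\alpha_r$ in place of $\ww^\GG,\Delta_\GG,\alpha$ — which already yields the inclusion $\alpha(\sD)(\C_0^u(\GG)\otimes\I)\subseteq\C_0^u(\GG)\otimes\sD$. For the reverse inclusion, one uses $\alpha$-weak Podle\'s to replace $\sD$ inside $\alpha$ by the closed span of the slices $(\omega\otimes\id)\alpha(d)$, applies $\alpha((\omega\otimes\id)\alpha(d))=(\omega\otimes\id\otimes\id)(\id\otimes\alpha)\alpha(d)$ and rewrites $(\id\otimes\alpha)\alpha(d)$ by means of $\Delta_\GG^u$ and the half-lifted identity above, and then moves the single factor $\C_0^u(\GG)\otimes\I$ inside, where it lands on the middle leg. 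After splitting the slice on the first leg so as to produce a factor from $\C_0(\hh\GG)$ to the left of $\wW^\GG_{12}$ (legitimate because $\C_0(\hh\GG)$ acts non-degenerately on $\Ltwo(\GG)$), the factor $\wW^\GG_{12}$ is absorbed using the displayed regularity density, and ${\wW^\GG}^{*}_{12}$ is absorbed symmetrically against $\C_0^u(\GG)\otimes\I$; what survives is, exactly as at the end of the proof of Proposition~\ref{weak}, the closed span of expressions $(\omega'\otimes\id\otimes\id)\bigl((y\otimes x\otimes\I)\,\alpha_r(\sD)_{13}\,(y'\otimes x'\otimes\I)\bigr)$ with $y,y'\in\C_0(\hh\GG)$ and $x,x'\in\C_0^u(\GG)$, which evaluates to $\C_0^u(\GG)\otimes\sD$. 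Together with the first inclusion, this gives $\alpha$-Podle\'s condition.

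The step I expect to cost the most work — apart from the bookkeeping with the three legs and the justification of the slice manipulations at the level of multipliers — is that the half-lifted rewriting requires slicing the first leg by a \emph{normal} functional, whereas $\alpha$-weak Podle\'s, by definition, only supplies functionals in $\C_0^u(\GG)^*$. The natural remedy is to reduce, at the outset, the class of functionals occurring in the $\alpha$-weak Podle\'s condition to the ideal $\Linf(\GG)_*\subseteq\C_0^u(\GG)^*$: using that $(\eps\otimes\id)\circ\alpha=\id_\sD$ — which follows from $(\eps\otimes\id)\circ\Delta_\GG^u=\id$ and the coaction identity — together with the fact that $\Linf(\GG)_*$ is a two-sided ideal in $\C_0^u(\GG)^*$ whose image in $\C_0(\GG)^*$ is the space of normal functionals, one aims to prove $\sD=\bigl[(\nu\otimes\id)\alpha(\sD):\nu\in\Linf(\GG)_*\bigr]^{\text{\tiny{norm-cls}}}$. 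Establishing this reduction cleanly is the technical heart of the argument.
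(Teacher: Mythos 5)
Your outline does share its skeleton with the paper's argument (pass to $\alpha_r=(\Lambda_\GG\otimes\id)\circ\alpha$, rewrite $\alpha\big((\omega\otimes\id)\alpha_r(d)\big)$ as $(\omega\otimes\id\otimes\id)\big(\wW^\GG_{12}\,\alpha_r(d)_{13}\,{\wW^\GG}^*_{12}\big)$, split the slice on the first leg, absorb the two copies of $\wW^\GG$), but the one step where regularity must do its work is wrong. The density you rely on, that the norm-closed linear span of $(\C_0(\hh\GG)\otimes\I)\wW^\GG$ equals $\C_0(\hh\GG)\otimes\C_0^u(\GG)$, is false for every non-trivial $\GG$: right multiplication by the unitary $\wW^\GG$ is isometric, so that span is just the closed subspace $\{(x\otimes\I)\wW^\GG : x\in\C_0(\hh\GG)\}$, an isometric copy of $\C_0(\hh\GG)$; in particular it is not stable under right multiplication by $\I\otimes y$ and hence does not contain the elements $(x\otimes\I)\wW^\GG(\I\otimes y)$ of Lemma \ref{huni}, contrary to your justification. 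What regularity actually provides (Lemma \ref{huni} and its adjoint) is absorption of $\wW^\GG$ when it is compressed by a $\C_0(\hh\GG)$-element in the first leg on one side \emph{and} a $\C_0^u(\GG)$-element in the second leg on the other side; a factor hitting a single leg never generates the full tensor product. Consequently your allocation---$\wW^\GG_{12}$ absorbed by the lone $\C_0(\hh\GG)$-factor coming from the slice, ${\wW^\GG}^*_{12}$ absorbed ``symmetrically against $\C_0^u(\GG)\otimes\I$'' alone---breaks down at both copies, and the surviving expression you record, with two factors $x,x'\in\C_0^u(\GG)$, is the two-sided computation of Proposition \ref{weak}, for which only one copy of $\C_0^u(\GG)\otimes\I$ is available in the present lemma.

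The correct mechanism, and the paper's, distributes the factors differently: the single available element $a\in\C_0^u(\GG)$ lies on the \emph{same} side of one copy of $\wW^\GG_{12}$ as an inserted $\C_0(\hh\GG)$-element, so that copy is absorbed with no regularity at all, simply because $\wW^\GG$ is a unitary multiplier of $\C_0(\hh\GG)\otimes\C_0^u(\GG)$ (exactly as in Proposition \ref{weak}); the $\C_0^u(\GG)$-element so produced commutes across $\alpha_r(d)_{13}$ and meets the remaining copy with the inserted $\C_0(\hh\GG)$-element on the \emph{opposite} side, i.e.\ in the configuration $(\I\otimes a){\wW^\GG}^*(b\otimes\I)$, and this single absorption is where Lemma \ref{huni} is invoked. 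Two further soft spots: the claim that the two-sided identity already yields $\alpha(\sD)(\C_0^u(\GG)\otimes\I)\subseteq\C_0^u(\GG)\otimes\sD$ is not valid at the multiplier level (knowing $(\C_0^u(\GG)\otimes\I)z\subseteq\C_0^u(\GG)\otimes\sD$ does not force $z\in\C_0^u(\GG)\otimes\sD$); the paper sidesteps this by proving the span equality for $(\C_0^u(\GG)\otimes\I)\alpha(\sD)$ and taking adjoints. Finally, the reduction of the weak Podle\'s condition to slices by $\Linf(\GG)_*$, which you rightly flag, is only stated as an aim, and your proposed route via $(\eps\otimes\id)\circ\alpha=\id$ and the ideal property does not obviously close, since $\eps$ is not approximable by reduced functionals unless $\GG$ is coamenable; the paper treats this reduction as immediate, so it is not the decisive difference---the decisive gap is the false regularity density and the resulting absorption scheme.
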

\begin{proof}In what follows we shall denote $\alpha_r = (\Lambda_\GG\otimes\id)\circ\alpha$. Clearly, 
$\alpha$-weak Podle\'s condition yields 
\[\sD = \big\{(\omega\otimes\id)(\alpha_r(d))\st d\in\sD, \omega\in \Linf(\GG)_*\big\}^{\text{\tiny{norm-cls}}}.\]We compute 
\[\begin{split}
(\C_0^u(\GG)&\otimes\I)\alpha(\sD) =
\big\{(a\otimes\I)\alpha((\omega\otimes\id)\alpha_r(d))\st a\in\C_0^u(\GG),\omega\in\Linf(\GG)_*,d\in\sD\big\}^{\text{\tiny{norm-cls}}}\\&=
\big\{(\omega\otimes\id\otimes\id)\big(((\I\otimes a)\wW^\GG)_{12}\alpha_r(d)_{13}\wW^{\GG^*}_{12}\big) 
\st a\in\C_0^u(\GG),\omega\in\Linf(\GG)_*,d\in\sD\big\}^{\text{\tiny{norm-cls}}}\\&=
\big\{(\omega\otimes\id\otimes\id)\big(\alpha_r(d)_{13}((\I\otimes a){\wW^\GG}^*(b\otimes\I))_{12}\big)\st
\\& \quad\quad\quad\quad\quad\quad\quad\quad a\in\C_0^u(\GG),b\in\C_0(\hh\GG), \omega\in\Linf(\GG)_*,d\in\sD\big\}^{\text{\tiny{norm-cls}}} \\&=\C_0^u(\GG)\otimes \sD.
\end{split}\]
where in the fourth equality we use Lemma \ref{huni}. 
\end{proof}
In the  next theorem  we shall show that a  $\C^*$-subalgebra  $\sB$ of $\M(\C_0(\GG))$ satisfying    Podle\'s condition admits a unique universal lift under  regularity condition on  $\GG$. Then we shall discuss the universal lift for $\sB\subset \M(\C_0(\GG))$ satisfying weak Podle\'s condition (with regularity condition dropped).
\begin{theorem}\label{lift}
Let $\GG$ be a regular locally compact quantum group. Suppose that $\sB$ is a non-zero $\C^*$-subalgebra of
 $\M(\C_0(\GG))$  satisfying  Podle\'s condition. Then there 
 exists a unique $\C^*$-subalgebra $\sB^u\subset\M(\C_0^u(\GG))$ such that $\Lambda_\GG(\sB^u)=\sB$ and $
 \Delta_\GG^u(\sB^u)(\C_0^u(\GG)\otimes \I)=\C_0^u(\GG)\otimes \sB^u$.
\end{theorem}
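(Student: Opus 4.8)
The plan is to construct $\sB^u$ explicitly as a relative commutant-type expression built from the half-lifted multiplicative unitary, and then verify it has the required properties using the regularity lemmas established above. Concretely, I would set
\[
\sB^u := \big\{(\omega\otimes\id)(\wW^\GG(b\otimes\I){\wW^\GG}^*)\st b\in\sB,\ \omega\in\Linf(\GG)_*\big\}^{\text{\tiny{norm-cls}}}
\subset\M(\C_0^u(\GG)),
\]
which is the natural candidate because $\wW^\GG(\cdot\otimes\I){\wW^\GG}^*$ is the half-lifted analogue of the $\GG$-action on itself coming from $\Delta_\GG$, and Podle\'s condition for $\sB$ says precisely that $\sB$ is generated this way from the reduced comultiplication. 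First I would check that this set really is a $\C^*$-subalgebra of $\M(\C_0^u(\GG))$ (closedness under product and adjoint follows from the fact that $b\mapsto \wW^\GG(b\otimes\I){\wW^\GG}^*$ together with slicing behaves like a coaction; one uses the pentagon relation for $\wW^\GG$ to handle products, exactly as in the computation in Proposition \ref{weak}).

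Next I would verify $\Lambda_\GG(\sB^u)=\sB$. Applying $\Lambda_\GG$ to the defining formula and using $(\Lambda_{\hh\GG}\otimes\Lambda_\GG)(\WW^\GG)=\ww^\GG$ (equivalently $(\id\otimes\Lambda_\GG)(\wW^\GG)=\ww^\GG$ in the appropriate leg-numbered sense), one gets $\Lambda_\GG(\sB^u)=\{(\omega\otimes\id)(\ww^\GG(b\otimes\I){\ww^\GG}^*)\st b\in\sB,\omega\in\Linf(\GG)_*\}^{\text{\tiny{norm-cls}}}=\{(\omega\otimes\id)\Delta_\GG(b)\st\ldots\}^{\text{\tiny{norm-cls}}}$, and this equals $\sB$ by Podle\'s condition $\Delta_\GG(\sB)(\C_0(\GG)\otimes\I)=\C_0(\GG)\otimes\sB$ together with left-invariance (slicing the first leg of $\C_0(\GG)\otimes\sB$). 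Then I would check the universal Podle\'s condition $\Delta_\GG^u(\sB^u)(\C_0^u(\GG)\otimes\I)=\C_0^u(\GG)\otimes\sB^u$; this is where regularity enters decisively. The computation is parallel to the proof of Lemma \ref{weakPodreg}: one rewrites $\Delta_\GG^u$ applied to a slice $(\omega\otimes\id)(\wW^\GG(b\otimes\I){\wW^\GG}^*)$ using the half-lifted pentagon equations, multiplies on the right by $\C_0^u(\GG)\otimes\I$, and absorbs the resulting multiplicative-unitary factors using the strengthened regularity identity $\C_0(\hh\GG)\otimes\C_0^u(\GG)=\{(x\otimes\I)\wW^\GG(\I\otimes y)\st x\in\C_0(\hh\GG),y\in\C_0^u(\GG)\}^{\text{\tiny{norm-cls}}}$ from Lemma \ref{huni}; this is exactly the mechanism that let Lemma \ref{weakPodreg} upgrade weak Podle\'s to Podle\'s, so the same bookkeeping applies here.

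For uniqueness, suppose $\sB_1^u$ is another $\C^*$-subalgebra of $\M(\C_0^u(\GG))$ with $\Lambda_\GG(\sB_1^u)=\sB$ and $\Delta_\GG^u(\sB_1^u)(\C_0^u(\GG)\otimes\I)=\C_0^u(\GG)\otimes\sB_1^u$. Podle\'s condition for $\sB_1^u$ makes it left-invariant, hence non-degenerate by Proposition \ref{streg1} (the hypothesis $\Lambda_\GG(\sB_1^u)=\sB\neq 0$ is satisfied). Slicing the first leg of $\C_0^u(\GG)\otimes\sB_1^u=\Delta_\GG^u(\sB_1^u)(\C_0^u(\GG)\otimes\I)$ gives $\sB_1^u=\{(\mu\otimes\id)\Delta_\GG^u(b^u)\st b^u\in\sB_1^u,\mu\in\C_0^u(\GG)^*\}^{\text{\tiny{norm-cls}}}$, and since $\Delta_\GG^u(b^u)=\WW^\GG(\ldots)$, combined with $(\Lambda_{\hh\GG}\otimes\id)\circ\Delta_\GG^u=\hh\Delta_r^{r,u}\circ\Lambda_{\hh\GG}$-type relations and the reduced constraint $\Lambda_\GG(\sB_1^u)=\sB$, one shows every such slice lies in $\sB^u$ and conversely; the strengthened regularity identity with $\WW^\GG$ (the second lemma after Lemma \ref{huni}) is what makes both inclusions work. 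The main obstacle I anticipate is precisely this last point — showing that the universal Podle\'s condition \emph{determines} $\sB^u$ from its reduction, i.e. that there is no room for two different lifts — because one must carefully track which leg-numbered copies of $\wW^\GG$, $\Ww^\GG$, $\WW^\GG$ appear and invoke the right regularity identity at each step; the existence half is comparatively mechanical once the candidate $\sB^u$ is written down, but the uniqueness half requires squeezing an arbitrary lift into the explicit form, and that is where a subtle error in leg-numbering or an unjustified use of non-degeneracy would most likely creep in.
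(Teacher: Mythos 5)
Your candidate $\sB^u$ and most of your outline coincide with the paper's proof (same set of slices of $\wW^\GG(b\otimes\I){\wW^\GG}^*$, same use of Lemma \ref{huni} for the universal Podle\'s condition, same verification of $\Lambda_\GG(\sB^u)=\sB$), but there is a genuine gap at the very first step: the claim that closedness of $\sB^u$ under products ``follows from the pentagon relation, exactly as in Proposition \ref{weak}''. Proposition \ref{weak} says nothing about products, and the pentagon alone does not do it: the product of two generators is
\[
(\omega\otimes\omega'\otimes\id)\bigl(\wW^\GG_{13}(b\otimes\I\otimes\I)\wW^{\GG^*}_{13}\,\wW^\GG_{23}(\I\otimes b'\otimes\I)\wW^{\GG^*}_{23}\bigr),
\]
and there is no direct way to rewrite this as a slice of $\wW^\GG(b''\otimes\I){\wW^\GG}^*$; in general the closed span of slices of such a map is \emph{not} an algebra (this is exactly the point where the paper, in the remark after Definition \ref{uniqueness}, needs semi-regularity even for the weak lift, via the techniques of \cite[Proposition 5.7]{BSV}). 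The paper handles multiplicativity only \emph{after} establishing the universal Podle\'s condition (which needs Lemma \ref{huni}, i.e.\ regularity): applying $\Lambda_\GG\otimes\id$ gives $\Delta_r^{r,u}(\sB)(\C_0(\GG)\otimes\I)=\C_0(\GG)\otimes\sB^u$ and, by taking adjoints, $(\C_0(\GG)\otimes\I)\Delta_r^{r,u}(\sB)=\C_0(\GG)\otimes\sB^u$; then one approximates $a\otimes x$ and $a^*\otimes x'$ by finite sums $\sum_i(y_i\otimes\I)\Delta_r^{r,u}(x_i)$ and $\sum_j\Delta_r^{r,u}(x'_j)(y'_j\otimes\I)$ with $x_i,x'_j\in\sB$, multiplies, and slices with a functional normalized at $aa^*$ to land back in $\sB^u$, using that $\sB$ itself is an algebra. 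Your proposed order (C*-property first, Podle\'s condition later) therefore cannot be carried out as written; the Podle\'s condition is an input to the multiplicativity argument, not an afterthought.

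Your assessment of where the difficulty sits is also inverted. Uniqueness, which you flag as the delicate half requiring the strengthened $\WW^\GG$-regularity identity, is in fact the easy part and needs no regularity at all: if $\sD$ satisfies $\Lambda_\GG(\sD)=\sB$ and the universal Podle\'s condition, apply $\Lambda_\GG\otimes\id$ to that condition, slice the first leg with normal functionals, and use \eqref{LDDrL}, i.e.\ $(\Lambda_\GG\otimes\id)\circ\Delta_\GG^u=\Delta_r^{r,u}\circ\Lambda_\GG$, to get $\sD=\{(\omega\otimes\id)\Delta_r^{r,u}(b)\st b\in\sB,\ \omega\in B(\Ltwo(\GG))_*\}^{\text{\tiny{norm-cls}}}=\sB^u$. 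So the regularity hypothesis is spent entirely on existence (the Podle\'s condition for $\sB^u$ and, through it, the product-closedness), and that is the step your proposal leaves unproved.
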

\begin{proof}
Let us  define \[\sB^u=\bigl\{(\omega\otimes\id)(\wW^\GG(b\otimes\I){\wW^\GG}^*)\st \omega\in B(\Ltwo(\GG))_*, b\in B\bigr\}^{\text{\tiny{norm-cls}}}\subset \M(\C_0^u(\GG)).\] Clearly  $\sB^u$ is a  $*$-closed subspace of $\M(\C_0^u(\GG))$. We will show that $\sB^u$ satisfies all the required conditions. We shall first  check the Podle\'s condition:  $\Delta_\GG^u(\sB^u)(\C_0^u(\GG)\otimes \I)=\C_0^u(\GG)\otimes \sB^u$. Let us note that  we abuse here the terminology concerning Podle\'s condition since we do not know yet if $\sB^u$ forms a $\C^*$-algebra. This will be checked later. We compute 
\begin{align*}
\Delta_\GG^u(\sB^u)(\C_0^u(\GG)\otimes \I)&=  \bigl\{ \Delta_\GG^u\bigr((\omega\otimes\id)(\wW^\GG(b\otimes \I){\wW^\GG}^*)\bigl)(y\otimes \I)\st\\&\quad\quad\quad\quad\quad\quad\quad\quad\quad\quad 
\omega\in B(\Ltwo(\GG))_*, b\in \sB, y\in\C_0^u(\GG)\bigr\}^{\text{\tiny{norm-cls}}}\\
&= \bigl\{(\omega\otimes\id\otimes\id)\bigl(\wW^\GG_{12}\wW^\GG_{13}(b\otimes\I\otimes\I)\wW^{\GG^*}_{13}\wW^{\GG^*}_{12}\bigr)(\I\otimes y \otimes \I)\st\\&
\quad\quad\quad\quad\quad\quad\quad\quad\quad\quad \omega\in B(\Ltwo(\GG))_*, b\in \sB, y\in\C_0^u(\GG)\bigr\}^{\text{\tiny{norm-cls}}}\\
&= \bigl\{(\omega\otimes\id\otimes\id)\bigl(\wW^\GG_{12}\wW^\GG_{13}(b\otimes\I\otimes\I)\wW^{\GG^*}_{13}\wW^{\GG^*}_{12}\bigr)(c\otimes y \otimes \I)\st\\&
\quad\quad\quad\quad\quad\quad\quad\quad\quad\quad
\omega\in B(\Ltwo(\GG))_*, b\in \sB, y\in\C_0^u(\GG), c\in\C_0(\hh\GG)\bigr\}^{\text{\tiny{norm-cls}}}\\
&=  \bigl\{ (c\cdot\omega\otimes\id\otimes\id)\bigl(\wW^\GG_{12}\wW^\GG_{13}(b\otimes\I\otimes\I)\wW^{\GG^*}_{13}\bigr)(\I\otimes y \otimes \I)\st\\&
\quad\quad\quad\quad\quad\quad\quad\quad\quad\quad
\omega\in B(\Ltwo(\GG))_*, b\in \sB, y\in\C_0^u(\GG), c\in\C_0(\hh\GG)\bigr\}^{\text{\tiny{norm-cls}}}\\
&= \bigl\{ (\omega\otimes\id\otimes\id)\bigl((d\otimes\I\otimes\I)\wW^\GG_{12}(\I\otimes y\otimes \I)\bigr)\bigl(\wW^\GG_{13}(b\otimes\I\otimes\I)\wW^{\GG^*}_{13}\bigr)\st \\&
\quad\quad\quad\quad\quad\quad\quad\quad\quad\quad 
\omega\in B(\Ltwo(\GG))_*,d\in\C_0(\hh\GG), y\in\C_0^u(\GG), b\in \sB\bigr\}^{\text{\tiny{norm-cls}}}\\
&=  \bigl\{(\omega\cdot d\otimes\id\otimes\id)\bigl((\I\otimes y\otimes \I)\bigr)\bigl(\wW^\GG_{13}(b\otimes\I\otimes\I)\wW^{\GG^*}_{13}\bigr)\st\\&
\quad\quad\quad\quad\quad\quad\quad\quad\quad\quad
\omega\in B(\Ltwo(\GG))_*,d\in\C_0(\hh\GG), y\in\C_0^u(\GG), b\in \sB \bigr\}^{\text{\tiny{norm-cls}}}\\
&=  \bigl\{(\omega\otimes\id\otimes\id)(\I\otimes y\otimes \I)\bigl(\wW^\GG_{13}(b\otimes\I\otimes\I)\wW^{\GG^*}_{13}\bigr)\st\\&
\quad\quad\quad\quad\quad\quad\quad\quad\quad\quad
\omega\in B(\Ltwo(\GG))_*, y\in\C_0^u(\GG), b\in \sB\bigr\}^{\text{\tiny{norm-cls}}}\\
&=  \bigl\{(y\otimes \I)\bigl((\omega\otimes\id\otimes\id)\wW^\GG_{13}(b\otimes\I\otimes\I)\wW^{\GG^*}_{13}\bigr)\st\\&
\quad\quad\quad\quad\quad\quad\quad\quad\quad\quad 
y\in\C_0^u(\GG), \omega\in B(\Ltwo(\GG))_*, b\in \sB \bigr\}^{\text{\tiny{norm-cls}}} 
 = \C_0^u(\GG)\otimes \sB^u
\end{align*} where in the sixth equality we used Lemma \ref{huni}.

Let us check  that $\Lambda_\GG(\sB^u)=\sB$:
\begin{align*}
\sB &=\bigl\{(\omega\otimes\id)(\Delta_\GG(b)(c\otimes\I))\st \omega\in B(\Ltwo(\GG))_*, b\in \sB, c\in\C_0(\GG)\bigr\}^{\text{\tiny{norm-cls}}}\\
&=\bigl\{(c\cdot\omega\otimes\id)(\ww^\GG(b\otimes\I) {\ww^\GG}^*)\st \omega\in B(\Ltwo(\GG))_*, b\in \sB, c\in\C_0(\GG)\bigr\}^{\text{\tiny{norm-cls}}}\\
&=\bigl\{(\omega\otimes\id)(\id\otimes\Lambda_\GG){\wW^\GG}(b\otimes\I){\wW^\GG}^*\st \omega\in B(\Ltwo(\GG))_*, b\in \sB\bigr\}^{\text{\tiny{norm-cls}}}\\
&=\bigl\{\Lambda_\GG\bigl((\omega\otimes\id){\wW^\GG}(b\otimes\I){\wW^\GG}^*\bigr)\st \omega\in B(\Ltwo(\GG))_*, b\in \sB\bigr\}^{\text{\tiny{norm-cls}}}=\Lambda_\GG(\sB^u).
\end{align*}
Let us prove that $\sB^u$ is a $\C^*$-algebra. 
By applying $\Lambda_\GG\otimes\id$ to Podle\'s condition  satisfied by $\sB^u$ we have 
$\Delta_r^{r,u}(\sB)(\C_0(\GG)\otimes\I)=\C_0(\GG)\otimes \sB^u$. Taking the conjugate of this equality we obtain $\C_0(\GG)\otimes \sB^u=(\C_0(\GG)\otimes\I)\Delta_r^{r,u}(\sB)$. Let us fix $\epsilon>0$ and  $x,x^\prime\in \sB^u$, $0\neq a\in \C_0(\GG)$. There exist finite sets $I$ and $J$ such that 
\begin{align*}
\| a\otimes x - \sum_{i\in I}(y_i\otimes\I)\Delta_r^{r,u}(x_i)\| &\leq \epsilon\\
\| a^*\otimes x^\prime - \sum_{j\in J}\Delta_r^{r,u}(x^\prime_j)(y_j\otimes\I)\| &\leq \epsilon
\end{align*}
 for some $x_i,x^\prime_j,y_i, y^\prime_j\in\C_0(\GG)$ so
\begin{equation}\label{aprox}
\|aa^*\otimes xx^\prime - \sum_{i,j}(y_i\otimes\I)\Delta_r^{r,u}(x_ix_j^\prime)(y_j^\prime\otimes\I)\| \leq \epsilon^\prime
\end{equation}
where $\epsilon^\prime = (\|a\otimes x\|+\|a^*\otimes x^\prime\|)\epsilon$. 
Now choosing a functional  $\mu\in\C_0^u(\GG)^*$ such that $\mu(aa^*)= 1$ and   applying  $\mu\otimes \id$ to \eqref{aprox} we get 
$$\|xx^\prime - \sum_{i,j}(y_j^\prime\cdot\mu\cdot y_i\otimes \I)(\Delta_r^{r,u}(x_ix^\prime_j))\|\leq\|\mu\|\epsilon^\prime.$$
This  shows that $\sB^u$ is closed under product and completes the  existence part of the proof. 

In order to prove the uniqueness, suppose  $\sD\subset\M(\C_0^u(\GG))$ satisfies the required conditions. Then the Podle\'s condition  yields 
\[
\sD =\bigl\{(\omega\otimes\id)((\Lambda_\GG\otimes\id)\Delta_\GG^u(d))\st d\in \sD, \omega\in B(\Ltwo(\GG))_* \bigr\}^{\text{\tiny{norm-cls}}}.
\] 
Using \eqref{LDDrL} we get
\[\begin{split}
\sD &=\bigl\{(\omega\otimes\id)((\Lambda_\GG\otimes\id)\Delta_\GG^u(d))\st d\in \sD, \omega\in B(\Ltwo(\GG))_* \bigr\}^{\text{\tiny{norm-cls}}}\\
&=\bigl\{(\omega\otimes\id)\Delta_r^{r,u}(b)\st b\in \sB, \omega\in B(\Ltwo(\GG))_*\bigr\}^{\text{\tiny{norm-cls}}}=\sB^u
\end{split}\]
and we get uniqueness. 
\end{proof}
\begin{example} Let  $\HH$ be a closed quantum subgroup of a regular locally compact quantum group $\GG$. In   \cite{Vaes} among other things the $\C^*$-quantum homogeneous space   $\C_0(\GG/\HH)\subset \M(\C_0(\GG))$ was constructed (under the regularity condition on $\GG$). Using Theorem \ref{lift} we get its universal version $\C_0^u(\GG/\HH)$. The existence of $\C_0^u(\GG/\HH)$ was hinted  in \cite[Remark 8.6]{Vaes}. 
\end{example}
 \begin{remark}
Let $\sN\subset \Linf(\GG)$ be a von Neumann algebra  satisfying   Baaj-Vaes conditions. Then there exists a locally compact quantum group $\HH$ such that $\sN=\Linf(\HH)\subset \Linf(\GG)$. Using the techniques of the proof of \cite[Proposition 2.3]{KKS} we observe that  $\C_0(\HH)$ embeds into $\M(\C_0(\GG))$. Let us denote the embedding by $\pi: \C_0(\HH)\to \M(\C_0(\GG))$. Then $\pi$  satisfies 
\[\Delta_\HH\circ\pi = (\pi\otimes\pi)\circ\Delta_\GG.\] Using \cite{MRW} we get the universal lift   $\pi^u: \C_0^u(\HH)\to \M(\C_0^u(\GG))$ defining a quantum group homomorphism from $\GG$ to $\HH$. 

Let us note that   $\sB=\pi(\C_0(\HH))$ satisfies Podle\'s condition
\[
\Delta_\GG(\sB)(\C_0(\GG)\otimes\I) = \C_0(\GG)\otimes \sB.\] 
  It is easy to verify that $ \pi^u(\C_0^u(\HH))$ satisfies the conditions of Theorem \ref{lift}, i.e. $\sB^u = \pi^u(\C_0^u(\HH))$.  In particular, $\sB^u$ exists without the  regularity of $\GG$. It may happen that $\pi^u$    is not injective so we don't have $\sB^u \cong\C_0^u(\HH)$ in general. For an example when $\pi^u $ is not injective we refer to  \cite{BV}.
\end{remark}
In the next definition we adopt the terminology introduced in Definition \ref{weakPodandPod} and Definition \ref{weakPodandPoduni}. 
\begin{definition}\label{uniqueness} 
Let   $\sB\subset\M(\C_0(\GG))$ be a non-zero $\C^*$-subalgebra  satisfying    weak Podle\'s condition.
We say that a $\C^*$-subalgebra    $\sB^u\subset\M(\C^u_0(\GG))$ is a {\it weak lift} of   $\sB$ if  $\Lambda_\GG(\sB^u) = \sB$ and   $\sB^u$ satisfies weak Podle\'s condition. 

Let   $\sB\subset\M(\C_0(\GG))$ be a non-zero $\C^*$-subalgebra  satisfying    Podle\'s condition.
We say that a $\C^*$-subalgebra    $\sB^u\subset\M(\C^u_0(\GG))$ is a {\it lift} of   $\sB$ if  $\Lambda_\GG(\sB^u) = \sB$ and   $\sB^u$ satisfies  Podle\'s condition. 
\end{definition}
Using Proposition \ref{streg1} we get 
\begin{corollary}
Let $\sB\subset \M(\C_0(\GG))$ be a non-zero $\C^*$-algebra satisfying weak Podle\'s condition  and admitting  a weak lift  $\sB^u$. Then $\sB^u$ is non-degenerate,  $\sB^u \C_0^u(\GG) = \C_0^u(\GG)$.
\end{corollary}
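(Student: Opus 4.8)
The statement to prove is the Corollary: if $\sB\subset\M(\C_0(\GG))$ is a non-zero $\C^*$-subalgebra satisfying weak Podle\'s condition and admitting a weak lift $\sB^u$, then $\sB^u$ is non-degenerate, i.e. $\sB^u\C_0^u(\GG)=\C_0^u(\GG)$. The plan is to reduce this to Proposition~\ref{streg1}, which says that a left-invariant $\C^*$-subalgebra $\sB^u\subset\M(\C_0^u(\GG))$ with $\Lambda_\GG(\sB^u)\neq 0$ is automatically non-degenerate. So the entire task is to verify that the two hypotheses of Proposition~\ref{streg1} hold for $\sB^u$: (i) $\Lambda_\GG(\sB^u)\neq 0$, and (ii) $\sB^u$ is left-invariant, i.e. $(\mu\otimes\id)\Delta_\GG^u(\sB^u)\subset\sB^u$ for all $\mu\in\C_0^u(\GG)^*$.

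Condition (i) is immediate: by the definition of a weak lift (Definition~\ref{uniqueness}), $\Lambda_\GG(\sB^u)=\sB$, and $\sB$ is non-zero by hypothesis, so $\Lambda_\GG(\sB^u)=\sB\neq 0$. Condition (ii) is where the small amount of work lies. Recall that as noted in Definition~\ref{weakPodandPoduni} (and the analogous remark after Definition~\ref{weakPodandPod}), weak Podle\'s condition implies left-invariance; more precisely, the chain ``Podle\'s condition $\implies$ weak Podle\'s condition $\implies$ left-invariance'' is asserted in the excerpt. However, for $\sB^u\subset\M(\C_0^u(\GG))$ the excerpt only explicitly lists the hierarchy of conditions in Definition~\ref{weakPodandPoduni} without the implication arrows spelled out, so I would include the short argument: given $\mu\in\C_0^u(\GG)^*$, write $\mu=c\cdot\omega$ with $c\in\C_0^u(\GG)$ and $\omega\in\C_0^u(\GG)^*$ (using Cohen factorization, or simply reduce to such $\mu$ by density), and then for $b\in\sB^u$ compute
\[
(\mu\otimes\id)\Delta_\GG^u(b)=(\omega\otimes\id)\bigl((c\otimes\I)\Delta_\GG^u(b)\bigr)
\in\bigl\{(\omega\otimes\id)\bigl((\C_0^u(\GG)\otimes\I)\Delta_\GG^u(\sB^u)(\C_0^u(\GG)\otimes\I)\bigr)\st\omega\in\C_0^u(\GG)^*\bigr\}^{\text{\tiny{norm-cls}}},
\]
and weak Podle\'s condition $(\C_0^u(\GG)\otimes\I)\Delta_\GG^u(\sB^u)(\C_0^u(\GG)\otimes\I)=\C_0^u(\GG)\otimes\sB^u$ shows this set equals $\{(\omega\otimes\id)(\C_0^u(\GG)\otimes\sB^u)\st\omega\in\C_0^u(\GG)^*\}^{\text{\tiny{norm-cls}}}=\sB^u$. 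Hence $\sB^u$ is left-invariant.

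With (i) and (ii) established, Proposition~\ref{streg1} applies verbatim and yields $\sB^u\C_0^u(\GG)=\C_0^u(\GG)$, which is the assertion that $\sB^u$ is non-degenerate. I do not anticipate a genuine obstacle here; the only mild care needed is the factorization $\mu=c\cdot\omega$ used to pass from an arbitrary functional to one that can absorb a copy of $\C_0^u(\GG)$, and the bookkeeping of norm-closures when slicing the tensor leg — both entirely routine given the machinery already set up in Section~\ref{lifsec}. One could even shorten the write-up to a single sentence: ``By Definition~\ref{uniqueness} we have $\Lambda_\GG(\sB^u)=\sB\neq 0$, and weak Podle\'s condition implies left-invariance, so Proposition~\ref{streg1} gives the claim.''
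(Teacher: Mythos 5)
Your proposal is correct and follows the same route as the paper: the paper proves this corollary simply by invoking Proposition~\ref{streg1}, since a weak lift satisfies $\Lambda_\GG(\sB^u)=\sB\neq 0$ and the weak Podle\'s condition implies left-invariance (the implication the paper records after Definition~\ref{weakPodandPod} and leaves implicit at the universal level). Your extra verification of left-invariance is a welcome detail; just note that the displayed membership needs the functional factored (or approximated) on both legs, e.g.\ $\mu=c\cdot\omega\cdot d$ by a second application of Cohen factorization, so that $(d\otimes\I)\Delta_\GG^u(b)(c\otimes\I)$ lands in $\C_0^u(\GG)\otimes\sB^u$ before slicing --- a routine repair you already hint at.
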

\begin{remark}
It can be easily proved that if the weak lift  $\sB^u$ of $\sB$  in the sense of Definition \ref{uniqueness} exists   then it is uniquely given by
\begin{equation}\label{defBu}
\bigl\{(\omega\otimes\id)(\Delta_r^{r,u}(b))\st b\in\sB, \omega\in B(\Ltwo(\GG))_*\bigr\}^{\text{\tiny{norm-cls}}}.
\end{equation} Conversely, suppose that  $\sB\subset\M(\C_0(\GG))$ is a non-zero $\C^*$-subalgebra  satisfying  weak Podle\'s condition. Let us consider  $\sB^u$ defined by \eqref{defBu}. Then $\sB^u$ is a $*$-closed linear subspace of $\M(\C_0^u(\GG))$. Using the techniques of the first part of the proof of Theorem \ref{lift} we can prove (not using  regularity) that $\sB^u$ satisfies weak Podle\'s condition.  Note again the abuse of terminology as in the proof of Theorem \ref{lift}.  Assuming semi-regularity of $\GG$ and using the techniques of the proof of  \cite[Proposition 5.7]{BSV} we prove that  $\sB^u$ is a $\C^*$-subalgebra of $\M(\C_0^u(\GG))$.  

We were not able to prove that if $\sB\subset\M(\C_0(\GG))$ satisfies weak Podle\'s condition  then $\Delta_\GG(\sB)\subset \M(\C_0(\GG)\otimes\sB)$. Suppose that   $\Delta_\GG(\sB)\subset\M(\C_0(\GG)\otimes\sB)$ holds and $\sB$ admits a weak lift $\sB^u$.  In this case we were not able to prove that  $\Delta_\GG^u(\sB^u)\subset\M(\C^u_0(\GG)\otimes\sB^u)$.  Note again that the latter  holds   if    $\GG$ is regular. The above discussion yields the following definition. 
\end{remark}

\begin{definition}\label{sul}
Let $\sB\subset\M(\C_0(\GG))$ be a non-zero $\C^*$-subalgebra  satisfying    weak Podle\'s condition and such that $\Delta_\GG(\sB)\subset \M(\C_0(\GG)\otimes \sB)$.  We say that a $\C^*$-subalgebra    $\sB^u\subset\M(\C^u_0(\GG))$ is a \emph{strong  lift} of   $\sB$ if  $\Lambda_\GG(\sB^u) = \sB$, $\sB^u$ satisfies  weak Podle\'s condition  and $\Delta^u_\GG(\sB^u)\subset \M(\C^u_0(\GG)\otimes \sB^u)$. 
\end{definition}

In what follows   we shall consider the behavior of symmetry of $\sB$ under the      lift to the universal level $\sB^u$. 
\begin{proposition}\label{sym}
Let $\sB$ be a non-zero $\C^*$-subalgebra of $\M(\C_0(\GG))$    satisfying   weak Podle\'s condition and such that  $\Delta_\GG(\sB)\subset\M(\C_0(\GG)\otimes\sB)$. If    $\sB$ admits   strong lift  $\sB^u$ then $\sB^u$   is symmetric, i.e. ${\wW^\GG}(\I\otimes \sB^u){\wW^\GG}^*\subset \M(\C_0(\hh\GG)\otimes \sB^u)$. 
\end{proposition}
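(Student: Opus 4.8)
The plan is to work from the explicit description of $\sB^u$. Because $\sB$ satisfies weak Podle\'s condition, its strong lift, if it exists, is forced to be the $\C^*$-algebra of \eqref{defBu}, namely
\[\sB^u=\bigl\{(\omega\otimes\id)(\Delta_r^{r,u}(b))\st b\in\sB,\ \omega\in B(\Ltwo(\GG))_*\bigr\}^{\text{\tiny{norm-cls}}},\qquad \Delta_r^{r,u}(b)=\wW^\GG(b\otimes\I){\wW^\GG}^*.\]
First I would record two consequences of the strong‑lift hypothesis. Applying $\Lambda_\GG\otimes\id$ to $\Delta^u_\GG(\sB^u)\subset\M(\C^u_0(\GG)\otimes\sB^u)$ and to the weak Podle\'s condition of $\sB^u$, and using \eqref{LDDrL} together with $\Lambda_\GG(\sB^u)=\sB$, gives
\[\Delta_r^{r,u}(\sB)\subset\M(\C_0(\GG)\otimes\sB^u)\qquad\text{and}\qquad(\C_0(\GG)\otimes\I)\Delta_r^{r,u}(\sB)(\C_0(\GG)\otimes\I)=\C_0(\GG)\otimes\sB^u.\]
I would also use that $(\id\otimes\Lambda_\GG)\circ\Delta_r^{r,u}=\Delta_\GG$ (since $(\id\otimes\Lambda_\GG)(\wW^\GG)=\ww^\GG$), that $\sB$ and $\sB^u$ are non-degenerate (Proposition \ref{streg} and the Corollary after Definition \ref{uniqueness}), and the elementary reduction that it suffices to prove $\wW^\GG(\I\otimes z){\wW^\GG}^*(c\otimes\I)\in\C_0(\hh\GG)\otimes\sB^u$ for all $z\in\sB^u$ and $c\in\C_0(\hh\GG)$: this plus the $*$-closedness of $\sB^u$ yields $\wW^\GG(\I\otimes\sB^u){\wW^\GG}^*(\C_0(\hh\GG)\otimes\sB^u)\subseteq\C_0(\hh\GG)\otimes\sB^u$ and, after taking adjoints, the two-sided statement that gives membership in $\M(\C_0(\hh\GG)\otimes\sB^u)$.

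Next comes the pentagon computation. Fix $b\in\sB$ and $\omega\in B(\Ltwo(\GG))_*$ and put $z=(\omega\otimes\id)(\Delta_r^{r,u}(b))$. Restoring the leg over which $\omega$ acts (leg $1$, say, carrying $b$ and being sliced by $\omega$; leg $2$ the $\C^u_0(\GG)$‑output and leg $3$ the $\C_0(\hh\GG)$‑output), one has $\wW^\GG(\I\otimes z){\wW^\GG}^*=(\omega_1\otimes\id_2\otimes\id_3)\bigl(\wW^\GG_{32}\wW^\GG_{12}(b\otimes\I\otimes\I){\wW^\GG_{12}}^*{\wW^\GG_{32}}^*\bigr)$. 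The half‑lifted pentagon equation $\ww^\GG_{12}\wW^\GG_{13}\wW^\GG_{23}=\wW^\GG_{23}\ww^\GG_{12}$, read as $\wW^\GG_{32}\wW^\GG_{12}={\ww^\GG_{31}}^*\wW^\GG_{12}\ww^\GG_{31}$, turns the inner conjugation $\ww^\GG_{31}(b\otimes\I\otimes\I){\ww^\GG_{31}}^*$ into $\ww^\GG(\I\otimes b){\ww^\GG}^*=\beta(b)$ on legs $(3,1)$, so that
\[\wW^\GG(\I\otimes z){\wW^\GG}^*=(\omega_1\otimes\id_2\otimes\id_3)\bigl({\ww^\GG_{31}}^*\,\wW^\GG_{12}\,\beta(b)_{31}\,{\wW^\GG_{12}}^*\,\ww^\GG_{31}\bigr),\]
with $\beta(b)\in\M(\C_0(\hh\GG)\otimes\C_0(\GG))$ by \eqref{bet}.

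The last and most delicate step uses the hypothesis $\Delta_\GG(\sB)\subset\M(\C_0(\GG)\otimes\sB)$. Approximating $\beta(b)$ by finite sums $\sum_i c_i\otimes d_i$ with $c_i\in\C_0(\hh\GG)$, $d_i\in\M(\C_0(\GG))$ and pushing $\wW^\GG_{12}$ past $\beta(b)_{31}$ through $\wW^\GG_{12}(d\otimes\I){\wW^\GG_{12}}^*=\Delta_r^{r,u}(d)$, while using the identity ${\ww^\GG}^*\beta(b)=(\I\otimes b){\ww^\GG}^*$ to absorb the outer $\ww^\GG_{31}$'s against leg $1$ so that after the $\omega$‑slice leg $3$ genuinely remains in $\C_0(\hh\GG)$, one reduces everything to slices of products of the building blocks $\Delta_r^{r,u}(\,\cdot\,)$, $\ww^\GG$ and elements of $\C_0(\hh\GG)$. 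It then remains to check that the leg‑$2$ part lands in $\sB^u$; here one feeds in $\Delta_\GG(\sB)\subset\M(\C_0(\GG)\otimes\sB)$ together with $(\id\otimes\Lambda_\GG)\circ\Delta_r^{r,u}=\Delta_\GG$ and the weak Podle\'s conditions of $\sB$ and $\sB^u$, and combines them with the identity $\C_0(\GG)\otimes\sB^u=(\C_0(\GG)\otimes\I)\Delta_r^{r,u}(\sB)(\C_0(\GG)\otimes\I)$ recorded above to recognize the relevant slices as elements of $\C_0(\hh\GG)\otimes\sB^u$; passing to norm‑closed linear spans over $b$, $\omega$ and $c$ then finishes the proof. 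The main obstacle is exactly this point: since $\sB$ is \emph{not} assumed symmetric, $\beta(b)$ need not belong to $\M(\C_0(\hh\GG)\otimes\sB)$, so the legs of $\beta(b)$ cannot be distributed naively, and the whole weight of the argument is in arranging the pentagon rewritings so that the left‑invariance of $\sB$ under $\Delta_\GG$ — rather than a symmetry of $\sB$ — forces the output back into $\M(\C_0(\hh\GG)\otimes\sB^u)$.
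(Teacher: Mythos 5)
Your pentagon rewriting of $\wW^\GG(\I\otimes z){\wW^\GG}^*$ in terms of ${\ww^\GG_{31}}^*\wW^\GG_{12}\beta(b)_{31}{\wW^\GG_{12}}^*\ww^\GG_{31}$ is correct, but the step that is supposed to finish the proof is a genuine gap, for two reasons. First, $\beta(b)$ is only a multiplier of $\C_0(\hh\GG)\otimes\C_0(\GG)$, so it has no norm approximation by finite sums of elementary tensors $c_i\otimes d_i$ with $c_i\in\C_0(\hh\GG)$; only strict approximation by elements of $\C_0(\hh\GG)\otimes\C_0(\GG)$ is available, and you give no argument that the subsequent $\omega$-slice and conjugations survive a strict limit. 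Second, and decisively, the concluding claim --- that the leg which should land in $\sB^u$ can be ``recognized'' using only $\Delta_\GG(\sB)\subset\M(\C_0(\GG)\otimes\sB)$ --- is asserted rather than carried out, and it cannot be carried out: that hypothesis controls $\ww^\GG(b\otimes\I){\ww^\GG}^*$ and says nothing about where $\beta(b)=\ww^\GG(\I\otimes b){\ww^\GG}^*$ sits, which is exactly the term your formula leaves untouched in legs $(3,1)$. Control of $\beta(\sB)$, i.e. $\ww^\GG(\I\otimes\sB){\ww^\GG}^*\subset\M(\C_0(\hh\GG)\otimes\sB)$, is precisely the symmetry of $\sB$, and this is the hypothesis the paper's own proof uses: it applies $\id\otimes\Delta_r^{r,u}$ to that inclusion, combines it with $\Delta_r^{r,u}(\sB)\subset\M(\C_0(\GG)\otimes\sB^u)$ (which your first reduction also records) to get $\ww^\GG_{12}\wW^\GG_{13}(\I\otimes\Delta_r^{r,u}(\sB))\wW^{\GG^*}_{13}\ww^{\GG^*}_{12}\subset\M(\C_0(\hh\GG)\otimes\C_0(\GG)\otimes\sB^u)$, cancels the $\ww^\GG_{12}$-conjugation, and slices the middle leg with $\C_0(\GG)^*$, recovering a dense subspace of $\sB^u$ in the last leg via \eqref{defBu}. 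Although the word ``symmetric'' is missing from the statement as printed, it is clearly the intended hypothesis: both places where the proposition is invoked (Lemma \ref{stsymmlift} and Theorem \ref{comchar}) first verify that $\sB$, respectively $\sX^r$, is symmetric.

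Moreover, the literal statement you tried to prove is false, so no rearrangement of the pentagon argument can close the gap. If $\wW^\GG(\I\otimes\sB^u){\wW^\GG}^*\subset\M(\C_0(\hh\GG)\otimes\sB^u)$, then applying the extension of $\id\otimes\Lambda_\GG$ and using $\Lambda_\GG(\sB^u)=\sB$ gives $\ww^\GG(\I\otimes\sB){\ww^\GG}^*\subset\M(\C_0(\hh\GG)\otimes\sB)$, so $\sB$ itself would have to be symmetric, and its $\sigma$-weak closure would be a normal coideal; by Theorem \ref{comchar} an idempotent state producing such a coideal is of Haar type. Now take $\GG=\hh{G}$ for a finite group $G$ and the idempotent state given by the indicator function of a non-normal subgroup $H\leq G$: the associated algebra $\sB=\lin\{\lambda_h\st h\in H\}$ satisfies Podle\'s condition and $\Delta_\GG(\sB)\subset\M(\C_0(\GG)\otimes\sB)$, and it is its own strong lift since $\GG$ is coamenable; yet $\ww^\GG(\I\otimes\lambda_h){\ww^\GG}^*$ is (up to the convention for $\ww^\GG$) $\sum_g\delta_g\otimes\lambda_{ghg^{-1}}$, which lies in $\M(\c0(G)\otimes\sB)$ for all $h\in H$ only when $H$ is normal. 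So left-invariance cannot substitute for symmetry; the correct repair is to add symmetry of $\sB$ to the hypotheses, after which your preliminary reductions together with the short argument sketched above complete the proof.
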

\begin{proof}
Let us first note that $\Delta_\GG^u(\sB^u)\subset \M(\C^u_0(\GG)\otimes\sB^u)$ implies that $\Delta_r^{r,u}(\sB)\subset \M(\C_0(\GG)\otimes\sB^u)$. 
Thus using ${\ww^\GG}(\I\otimes \sB){\ww^\GG}^* \subset \M(\C_0(\hh\GG)\otimes \sB)$  we get  
\[\ww^\GG_{12}\wW^\GG_{13}(\I\otimes \Delta_r^{r,u}(\sB))\wW^{\GG^*}_{13}\ww^{\GG^*}_{12}\subset \M(\C_0(\hh\GG)\otimes \C_0(\GG)\otimes \sB^u).\] In particular 
$\wW^\GG_{13}(\I\otimes\Delta_r^{r,u}(\sB))\wW^{\GG^*}_{13}\subset \M(\C_0(\hh\GG)\otimes \C_0(\GG)\otimes\sB^u)$ which when sliced with $(\id\otimes\mu\otimes \id)$, where $\mu$ runs over all functionals on $\C_0(\GG)$ yields  ${\wW^\GG}(\I\otimes \sB^u){\wW^\GG}^*\subset \M(\C_0(\hh\GG)\otimes \sB^u)$
\end{proof}
Before formulating the next definition we refer to \eqref{bet} for a  definition of the action  $\beta\in\Mor(\C_0(\GG),\C_0(\hh\GG)\otimes\C_0(\GG))$ of $\hh\GG$ on $\C_0(\GG)$.
Let $\sB\subset\M(\C_0(\GG))$ be a non-degenerate $\C^*$-subalgebra such that $\beta(\sB)\subset \M(\C_0(\hh\GG)\otimes \sB)$. We say that 
  $\sB$ satisfies 
\begin{itemize}
\item \emph{$\beta$-Podle\'s condition} if $\beta(\sB)(\C_0(\hh\GG)\otimes\I)=\C_0(\hh\GG)\otimes\sB$,
\item \emph{$\beta$-weak Podle\'s condition} if $(\C_0(\hh\GG)\otimes\I)\beta(\sB)(\C_0(\hh\GG)\otimes\I)=\C_0(\hh\GG)\otimes\sB$.
\end{itemize}
Similarly we define  $\beta^u:\C^u_0(\GG)\to \M(\C_0(\hh\GG)\otimes\C^u_0(\GG))$ 
 \[\beta^u(x) =  \wW^\GG(\I\otimes x){\wW^\GG}^*\]
  for all $x\in\C^u_0(\GG)$ and the notion of \emph{$\beta^u$-(weak) Podle\'s condition} for a non-degenerate $\sD\subset \M(\C_0^u(\GG))$.
\begin{example}\label{betapodquot}
  Let $\GG$ be a regular locally compact quantum group and $\HH$ a closed quantum subgroup of $\GG$. Using \cite[Theorem 8.2]{Vaes} we see that $\C_0(\GG/\HH)$ satisfies $\beta$-Podle\'s condition.  
  \end{example}
\begin{lemma}\label{betaP}
Let  $\sB\subset \M(\C_0(\GG))$ be a symmetric  $\C^*$-subalgebra  satisfying weak Podle\'s and $\beta$-weak Podle\'s condition. Suppose that $\sB $   admits a weak lift $\sB^u$. Then $\sB^u$ satisfies $\beta^u$-weak Podle\'s condition
\[
(\C_0(\hh\GG)\otimes\I)\beta^u(\sB^u)(\C_0(\hh\GG)\otimes\I)=\C_0(\hh\GG)\otimes\sB^u.
\]
\end{lemma}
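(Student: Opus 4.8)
The plan is to lift $\beta^u$-weak Podle\'s condition from the reduced to the universal level by a leg-numbering computation of the same type as in the first part of the proof of Theorem \ref{lift} and in the Remark following Definition \ref{uniqueness}, with the $\beta$-weak Podle\'s condition of $\sB$ playing the role that regularity of $\GG$ plays there. I will use: the description of the weak lift forced by the Remark following Definition \ref{uniqueness},
\[
\sB^u=\bigl\{(\omega\otimes\id)\Delta_r^{r,u}(b)\st b\in\sB,\ \omega\in B(\Ltwo(\GG))_*\bigr\}^{\text{\tiny{norm-cls}}},\qquad\Delta_r^{r,u}(b)=\wW^\GG(b\otimes\I){\wW^\GG}^*,
\]
the half-lifted pentagon equation $\ww^\GG_{12}\wW^\GG_{13}\wW^\GG_{23}=\wW^\GG_{23}\ww^\GG_{12}$ (equivalently $\wW^\GG_{13}\wW^\GG_{23}={\ww^\GG_{12}}^*\wW^\GG_{23}\ww^\GG_{12}$) and its consequence $(\id\otimes\Delta_r^{r,u})(\ww^\GG)=\ww^\GG_{12}\wW^\GG_{13}$; the symmetry of $\sB$, that is $\beta(\sB)=\ww^\GG(\I\otimes\sB){\ww^\GG}^*\subset\M(\C_0(\hh\GG)\otimes\sB)$; and the $\beta$-weak Podle\'s condition of $\sB$, namely $(\C_0(\hh\GG)\otimes\I)\beta(\sB)(\C_0(\hh\GG)\otimes\I)=\C_0(\hh\GG)\otimes\sB$.

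First I check the inclusion $\beta^u(\sB^u)\subset\M(\C_0(\hh\GG)\otimes\sB^u)$, which the statement presupposes. For $b\in\sB$ and $\omega\in B(\Ltwo(\GG))_*$ a leg expansion gives
\[
\beta^u\bigl((\omega\otimes\id)\Delta_r^{r,u}(b)\bigr)=(\id\otimes\omega\otimes\id)\bigl(\wW^\GG_{13}\wW^\GG_{23}(\I\otimes b\otimes\I){\wW^\GG_{23}}^*{\wW^\GG_{13}}^*\bigr),
\]
and the pentagon equation turns the bracket into ${\ww^\GG_{12}}^*\wW^\GG_{23}(\beta(b))_{12}{\wW^\GG_{23}}^*\ww^\GG_{12}$. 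By symmetry $(\beta(b))_{12}\in\M(\C_0(\hh\GG)\otimes\sB)$ (sitting in legs $1$ and $2$), and conjugating its second leg by $\wW^\GG_{23}$ produces $(\id\otimes\Delta_r^{r,u})(\beta(b))$, whose leg-$3$ components (obtained by slicing the first two legs) are finite sums of elements $(\mu\otimes\id)\Delta_r^{r,u}(b')$ with $b'\in\sB$, hence lie in $\sB^u$. Slicing the middle leg --- the remaining $\ww^\GG_{12}$ touches only legs $1$ and $2$ --- therefore places $\beta^u((\omega\otimes\id)\Delta_r^{r,u}(b))$ in $\M(\C_0(\hh\GG)\otimes\sB^u)$; passing to closed spans yields $\beta^u(\sB^u)\subset\M(\C_0(\hh\GG)\otimes\sB^u)$, so the asserted equality is meaningful, and it remains to prove it.

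For the equality I would run the chain of equalities from the first part of the proof of Theorem \ref{lift}, with $\Delta_\GG^u$ replaced throughout by $\beta^u$. Beginning with $(\C_0(\hh\GG)\otimes\I)\beta^u(\sB^u)(\C_0(\hh\GG)\otimes\I)$, insert the descriptions of $\sB^u$ and of $\beta^u$ to obtain a triple-leg expression in $\wW^\GG$ with an embedded $\beta(b)$; use the pentagon equation to relocate the two flanking $\C_0(\hh\GG)$-factors so that they end up adjacent to that $\beta(b)$; and then apply the $\beta$-weak Podle\'s condition of $\sB$ in the form $(\C_0(\hh\GG)\otimes\I)\beta(\sB)(\C_0(\hh\GG)\otimes\I)=\C_0(\hh\GG)\otimes\sB$ to collapse these three factors into $\C_0(\hh\GG)\otimes\sB$. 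Slicing the auxiliary leg and using the description of $\sB^u$ once more identifies the outcome as $\C_0(\hh\GG)\otimes\sB^u$; reading the chain backwards gives the reverse inclusion. This is the $\beta$-analogue of the regularity-free argument for weak Podle\'s of the lift sketched in the Remark after Definition \ref{uniqueness}: there the collapse step uses the extra $\C_0^u(\GG)$-factor, here it uses the extra $\C_0(\hh\GG)$-factor, which is exactly why no regularity assumption on $\GG$ is required.

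The pentagon bookkeeping of the second paragraph is routine. The delicate step, which I expect to be the main obstacle, is the middle of the third paragraph --- manoeuvring the two $\C_0(\hh\GG)$-factors, which initially flank $\beta^u(\sB^u)$, into a position adjacent to the embedded copy of $\beta(\sB)$ so that the $\beta$-weak Podle\'s condition can be invoked; getting the leg positions and the order of the pentagon moves right there is the heart of the argument, and it is the $\beta$-counterpart of the passage from \eqref{reg} to Lemma \ref{huni}.
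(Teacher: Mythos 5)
Your overall direction is the paper's (lift the reduced $\beta$-weak Podle\'s condition through $\Delta_r^{r,u}$, with that condition standing in for regularity), and your second paragraph's computation is correct, but the step you yourself flag as ``the heart of the argument'' is both missing and not achievable by the tool you propose. After inserting the descriptions of $\sB^u$ and $\beta^u$, a generating element of $(\C_0(\hh\GG)\otimes\I)\beta^u(\sB^u)(\C_0(\hh\GG)\otimes\I)$ is $(\id\otimes\omega\otimes\id)\bigl((\tilde y\otimes\I\otimes\I)\wW^\GG_{13}\wW^\GG_{23}(\I\otimes b\otimes\I){\wW^\GG_{23}}^*{\wW^\GG_{13}}^*(y\otimes\I\otimes\I)\bigr)$, and the pentagon merely toggles the middle bracket between this form and ${\ww^\GG_{12}}^*\wW^\GG_{23}(\beta(b))_{12}{\wW^\GG_{23}}^*\ww^\GG_{12}$. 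In either form the flanking elements $\tilde y,y\in\C_0(\hh\GG)$ live in leg $1$ and are separated from $(\beta(b))_{12}$ by a unitary ($\wW^\GG_{13}$, resp.\ $\ww^\GG_{12}$) which shares leg $1$ with them and does not commute with them, while $\ww^\GG_{12}$ in turn does not commute with $\wW^\GG_{23}$ (their common leg carries $\C_0(\GG)$ versus $\C_0(\hh\GG)$); so pentagon moves alone never bring $\tilde y,y$ adjacent to $\beta(b)$, and the announced application of the $\beta$-weak Podle\'s condition is never enabled. What actually makes the factors adjacent is either the paper's order of operations --- apply $(\id\otimes\Delta_r^{r,u})$ to the reduced identity $(\C_0(\hh\GG)\otimes\I)\beta(\sB)(\C_0(\hh\GG)\otimes\I)=\C_0(\hh\GG)\otimes\sB$, where the $\C_0(\hh\GG)$ factors are adjacent to $\beta(\sB)$ from the start, and only then slice the middle leg --- or an absorption argument: factor the leg-$2$ functional as $c\cdot\mu\cdot c'$ with $c,c'\in\C_0(\GG)$ and use the elementary two-sided fact $(\tilde y\otimes c)\ww^\GG\in\C_0(\hh\GG)\otimes\C_0(\GG)$, after which only leg-$1$ components must be commuted past $\wW^\GG_{23}$, which they do. Note also that your appeal to the passage from \eqref{reg} to Lemma \ref{huni} actually supports the paper's order (the adjacency identity is used \emph{before} the half-lifted comultiplication is applied), not the reversed order you propose.

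A second omission: in the paper's chain the collapse between $(\C_0(\GG)\otimes\I)\Delta_r^{r,u}(\sB)(\C_0(\GG)\otimes\I)$ and $\C_0(\GG)\otimes\sB^u$ uses that the weak lift $\sB^u$ satisfies the weak Podle\'s condition of Definition \ref{weakPodandPoduni} (push it down with $\Lambda_\GG\otimes\id$), whereas your plan invokes only the slice description of $\sB^u$ from the Remark after Definition \ref{uniqueness}; with the absorption step done properly one can get around this, but as written your chain neither performs the absorption nor calls on the weak-lift property, so neither inclusion is actually established. Your endgame is fine --- once a collapse into $\C_0(\hh\GG)\otimes\sB$ is achieved, the residual $\wW^\GG_{23}$-conjugation produces $\Delta_r^{r,u}(\sB)$ and slicing the auxiliary leg lands in $\sB^u$ --- but it is conditional on the unperformed relocation, so the proposal has a genuine gap exactly where you anticipated the difficulty.
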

\begin{proof}
$\beta$-weak  Podle\'s condition  for $\sB$ writes
\begin{equation}\label{beta}
(\C_0(\hh\GG)\otimes\I)\bigl({\ww^\GG}(\I\otimes\sB){\ww^\GG}^*\bigr)(\C_0(\hh\GG)\otimes\I) = \C_0(\hh\GG)\otimes\sB
\end{equation}
Applying $\id\otimes\Delta_r^{r,u}$ to both sides of \eqref{beta} and then slicing the  second leg by the functionals on $\C_0(\GG)$ we get
\begin{align*}
\C_0(\hh\GG)\otimes\sB^u &= \bigl\{(\id\otimes\mu\otimes\id)\bigl((\tilde y\otimes\I\otimes\I)\ww^\GG_{12}\wW^\GG_{13}(\I\otimes\Delta_r^{r,u}(b))\wW^{\GG^*}_{13}\ww^{\GG^*}_{12}(y\otimes\I\otimes\I)\bigr)\st 
\\&\quad\quad\quad\quad\quad\quad\quad\quad\quad\quad
\mu\in\C_0(\GG)^*, b\in\sB, y,\tilde y\in\C_0(\hh\GG)\bigr\}^{\text{\tiny{norm-cls}}}\\
&=\bigl\{(\id\otimes\mu\otimes\id)\bigl((\tilde y\otimes\I\otimes\I)\wW^\GG_{13}(\I\otimes\Delta_r^{r,u}(b))\wW^{\GG^*}_{13}(y\otimes\I\otimes\I)\bigr)\st 
\\&\quad\quad\quad\quad\quad\quad\quad\quad\quad\quad
\mu\in\C_0(\GG)^*, b\in\sB, y,\tilde y\in\C_0(\hh\GG)\bigr\}^{\text{\tiny{norm-cls}}}\\
&=\bigl\{(\id\otimes\mu\otimes\id)\bigl((\tilde y\otimes\I\otimes\I)\wW^\GG_{13}(\I\otimes(\tilde{x}\otimes\I)\Delta_r^{r,u}(b)(x\otimes\I))\wW^{\GG^*}_{13}(y\otimes\I\otimes\I)\bigr)\st 
\\&\quad\quad\quad\quad\quad\quad\quad\quad\quad\quad
\mu\in\C_0(\GG)^*, b\in\sB, x, \tilde{x}\in\C_0(\GG), y,\tilde y\in\C_0(\hh\GG)\bigr\}^{\text{\tiny{norm-cls}}}\\
&=\bigl\{(\id\otimes\mu\otimes\id)\bigl((\tilde y\otimes\I\otimes\I)\wW^\GG_{13}(\I\otimes x\otimes b)\wW^{\GG^*}_{13}(y\otimes\I\otimes\I)\bigr)\st
\\&\quad\quad\quad\quad\quad\quad\quad\quad\quad\quad
 \mu\in\C_0(\GG)^*, b\in\sB^u, x\in\C_0(\GG), y\in\C_0(\hh\GG)\bigr\}^{\text{\tiny{norm-cls}}}\\
&=\bigl\{(\tilde y \otimes\I)\bigl({\wW^\GG}(\I\otimes b){\wW^\GG}^*\bigr)(y\otimes\I)\st 
y,\tilde y\in\C_0(\hh\GG), b\in\sB^u\bigr\}^{\text{\tiny{norm-cls}}}\\
&=(\C_0(\hh\GG)\otimes\I)\beta^u(\sB^u)(\C_0(\hh\GG)\otimes\I).
\end{align*} 
\end{proof}
\begin{definition}\label{strnsymm}
Let $\sD$ be a non-degenerate $\C^*$-subalgebra of $\M(\C_0^u(\GG))$. We say that $\sD$ is \emph{strongly symmetric} if $\WW^\GG(\I\otimes \sD){\WW^\GG}^*\subset\M(\C^u_0(\hh\GG)\otimes \sD)$.
\end{definition}
In the following lemma we will establish a relation between the concepts of symmetric and strongly symmetric subalgebras of $\M(\C_0^u(\GG))$.  
\begin{lemma}\label{stsymmlift} Let  $\sB\subset \M(\C_0(\GG))$ be a symmetric  subalgebra  satisfying weak Podle\'s condition,  $\beta$-weak Podle\'s condition and such that $\Delta_\GG(\sB)\subset\M(\C_0(\GG)\otimes\sB)$.  Suppose that $\sB $  admits strong lift $\sB^u$.  Then $\sB^u$ is strongly symmetric and
\begin{equation}\label{hiperbeta}
(\C_0^u(\hh\GG)\otimes\I)\WW^\GG (\I\otimes \sB^u){\WW^\GG}^*(\C_0^u(\hh\GG)\otimes\I) = \C_0^u(\hh\GG)\otimes\sB^u
\end{equation}
\end{lemma}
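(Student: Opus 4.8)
The plan is to mimic the passage from Lemma~\ref{betaP} to the universal level, this time going up one more step by applying $\hh\Delta_r^{r,u}$ in the first leg, exactly as the second regularity lemma above upgrades Lemma~\ref{huni} to the statement with $\WW^\GG$. By Lemma~\ref{betaP} we already know that $\sB^u$ satisfies $\beta^u$-weak Podle\'s condition, i.e.
\[
(\C_0(\hh\GG)\otimes\I)\bigl(\wW^\GG(\I\otimes\sB^u){\wW^\GG}^*\bigr)(\C_0(\hh\GG)\otimes\I) = \C_0(\hh\GG)\otimes\sB^u .
\]
First I would apply $\hh\Delta_r^{r,u}\otimes\id$ to both sides of this identity. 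Using the half-lifted pentagon equation $\hh{\ww}^\GG_{12}\hh{\wW}^\GG_{13}\cdots$ — more precisely the relation expressing $(\hh\Delta_r^{r,u}\otimes\id)(\wW^\GG)$ through $\wW^\GG_{13}$ and $\ww^{\hh\GG}_{12}$-type factors coming from $(\hh\Delta_r^{r,u}\otimes\id)(\ww^{\hh\GG})$ — the right-hand side becomes $\hh\Delta_r^{r,u}(\C_0(\hh\GG))\otimes\sB^u$, while the left-hand side is rewritten as a norm-closed span of expressions of the form $(\hh\Delta_r^{r,u}(\tilde y)\otimes\I)\,(\text{conjugate of }\I\otimes\I\otimes\sB^u\text{ by }\WW^\GG_{23})\,(\hh\Delta_r^{r,u}(y)\otimes\I)$ sitting in $\M(\C_0(\hh\GG)\otimes\C^u_0(\hh\GG)\otimes\sB^u)$, after one uses the commutation of $\ww^{\hh\GG}$-legs with the $\WW^\GG_{23}$-leg.

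Next I would slice the \emph{first} leg by arbitrary functionals on $\C_0(\hh\GG)$. Since $\hh\Delta_r^{r,u}$ satisfies Podle\'s condition,
\[
\hh\Delta_r^{r,u}(\C_0(\hh\GG))(\C_0(\hh\GG)\otimes\I)=\C_0(\hh\GG)\otimes\C^u_0(\hh\GG),
\]
so slicing the first leg of $\hh\Delta_r^{r,u}(\C_0(\hh\GG))\otimes\sB^u$ with functionals on $\C_0(\hh\GG)$ gives back $\C^u_0(\hh\GG)\otimes\sB^u$ (this is the familiar "$(\mu\otimes\id)\hh\Delta_r^{r,u}((a\otimes\I)\hh\Delta_r^{r,u}(b))$"-type computation already used twice in the excerpt). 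On the left-hand side the same slicing, together with one final application of the regularity lemma with $\WW^\GG$ (the lemma immediately after Lemma~\ref{huni}) to absorb the leftover $\C_0(\hh\GG)$-factors into $\C^u_0(\hh\GG)$, produces exactly
\[
(\C_0^u(\hh\GG)\otimes\I)\WW^\GG(\I\otimes\sB^u){\WW^\GG}^*(\C_0^u(\hh\GG)\otimes\I).
\]
This establishes \eqref{hiperbeta}. Strong symmetry, $\WW^\GG(\I\otimes\sB^u){\WW^\GG}^*\subset\M(\C_0^u(\hh\GG)\otimes\sB^u)$, then follows by the same slicing argument used in Proposition~\ref{sym}: start from $\Delta_\GG^u(\sB^u)\subset\M(\C^u_0(\GG)\otimes\sB^u)$, hence $\Delta_r^{r,u}(\sB)\subset\M(\C_0(\GG)\otimes\sB^u)$ and $\hh\Delta_r^{r,u}$ applied to the symmetry relation for $\sB$ (which lifts to $\wW^\GG(\I\otimes\sB^u){\wW^\GG}^*\subset\M(\C_0(\hh\GG)\otimes\sB^u)$ by Proposition~\ref{sym}), then conjugate by $\ww^{\hh\GG}$-legs and slice the middle $\C_0(\hh\GG)$-leg by all functionals.

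The main obstacle I expect is purely bookkeeping: keeping track of which multiplicative-unitary lift ($\ww$, $\wW$, $\Ww$, or $\WW$) lives on which pair of legs after applying $\hh\Delta_r^{r,u}$, and verifying the commutation relations among the resulting $\ww^{\hh\GG}_{12}$-type factors and the $\WW^\GG_{23}$-leg so that the unwanted factors can be slid to the outside and absorbed. The substantive input — regularity of $\GG$ — enters only through the two regularity lemmas above (the $\wW^\GG$ version of \eqref{reg} in Lemma~\ref{huni} and its $\WW^\GG$ strengthening), and these are applied in exactly the same way as in the proofs of Lemma~\ref{weakPodreg} and Lemma~\ref{betaP}; no new idea beyond "do the Lemma~\ref{betaP} computation one leg higher" is needed.
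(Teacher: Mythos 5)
Your skeleton --- apply $\hh\Delta_r^{r,u}\otimes\id$ to the $\beta^u$-weak Podle\'s identity of Lemma \ref{betaP} (respectively to the symmetry $\wW^\GG(\I\otimes\sB^u){\wW^\GG}^*\subset\M(\C_0(\hh\GG)\otimes\sB^u)$ supplied by Proposition \ref{sym}), then slice the first leg using Podle\'s condition for $\hh\Delta_r^{r,u}$ --- is indeed the paper's route, but the crux is glossed over and attributed to the wrong mechanisms. Since $(\hh\Delta_r^{r,u}\otimes\id)\bigl(\wW^\GG(\I\otimes b){\wW^\GG}^*\bigr)=\WW^\GG_{23}\wW^\GG_{13}(\I\otimes\I\otimes b)\wW^{\GG^*}_{13}\WW^{\GG^*}_{23}$, the left-hand side carries an inner conjugation by $\wW^\GG_{13}$, which shares leg $3$ with $b$ and therefore does \emph{not} disappear by ``commutation of the $\ww^{\hh\GG}$-legs with the $\WW^\GG_{23}$-leg''; your asserted intermediate form, in which only the $\WW^\GG_{23}$-conjugation remains, is exactly what has to be proved. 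In the paper this is done by first separating the first two legs (replace $\mu$ by $c\cdot\mu\cdot d$ and use Podle\'s condition for $\hh\Delta_r^{r,u}$ to turn $\hh\Delta_r^{r,u}(y)$ into elementary tensors $y\otimes z$ with $y\in\C_0(\hh\GG)$, $z\in\C^u_0(\hh\GG)$) and then applying Lemma \ref{betaP} a \emph{second} time in legs $(1,3)$: the closed span of $(y\otimes\I)\wW^\GG(\I\otimes b){\wW^\GG}^*(\tilde y\otimes\I)$ is $\C_0(\hh\GG)\otimes\sB^u$, and this is what removes the $\wW^\GG_{13}$-factors; only after that does slicing the first leg yield $\WW^\GG(\I\otimes\sB^u){\WW^\GG}^*$, respectively $(\C_0^u(\hh\GG)\otimes\I)\WW^\GG(\I\otimes\sB^u){\WW^\GG}^*(\C_0^u(\hh\GG)\otimes\I)$.

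Second, you cannot invoke Lemma \ref{huni} or its $\WW^\GG$-strengthening: both assume (indeed characterize) regularity of $\GG$, whereas Lemma \ref{stsymmlift} carries no regularity hypothesis --- the role regularity plays elsewhere is taken over here by the assumed $\beta$-weak Podle\'s condition on $\sB$, exactly as in Lemma \ref{betaP}, whose proof (contrary to what you write) uses no regularity at all. As written, your argument would at best prove the lemma for regular $\GG$, a strictly weaker statement than the one asserted. Finally, a bookkeeping slip in the strong-symmetry part: after applying $\hh\Delta_r^{r,u}$ in the first leg the relevant algebra sits in $\M(\C_0(\hh\GG)\otimes\C_0^u(\hh\GG)\otimes\sB^u)$, so one must slice the \emph{first} ($\C_0(\hh\GG)$) leg; the middle leg is the universal one you need to keep, not the one to slice.
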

\begin{proof}
$\sB$ is symmetric, so using Proposition \ref{sym} we get
\begin{equation}\label{hsym}
{\wW^\GG}(\I\otimes\sB^u){\wW^\GG}^*\subset \M(\C_0(\hh\GG)\otimes\sB^u).
\end{equation}
Applying $(\hh{\Delta}_r^{r,u}\otimes\id)$ to \eqref{hsym}   we observe that
\begin{equation}\label{gsym}
(\hh{\Delta}_r^{r,u}\otimes\id){\wW^\GG}(\I\otimes\sB^u){\wW^\GG}^*\subset \M(\C_0(\hh\GG)\otimes\C_0^u(\hh\GG)\otimes\sB^u). 
\end{equation}
By slicing the first leg of \eqref{gsym} we obtain a subset of $\M(\C_0^u(\hh\GG)\otimes\sB^u)$ and now we compute the left hand side
\begin{align*}
\M(\C_0^u(\hh\GG)\otimes\sB^u)&\supset\bigl\{ (\mu\otimes\id\otimes\id)(\hh{\Delta}_r^{r,u}\otimes\id)\wW^\GG(\I\otimes b)\wW^{\GG^*}\st \mu\in\C_0(\hh\GG)^*, b\in\sB^u \bigr\}^{\text{\tiny{norm-cls}}}\\
&=\bigl\{ (\mu\otimes\id\otimes\id)\WW^\GG_{23}\wW^\GG_{13}(\I\otimes\I\otimes b)\wW^{\GG^*}_{13}\WW^{\GG^*}_{23}\st \mu\in\C_0(\hh\GG)^*, b\in\sB^u \bigr\}^{\text{\tiny{norm-cls}}}\\
&=\bigl\{ (\mu\otimes\id\otimes\id)\WW^\GG_{23}\bigl((\tilde{y}\otimes\I\otimes\I)\wW^\GG_{13}(\I\otimes\I\otimes b)\wW^{\GG^*}_{13}(y\otimes\I\otimes\I)\bigr)\WW^{\GG^*}_{23}\st
\\&\quad\quad\quad\quad\quad\quad\quad\quad\quad\quad
 \mu\in\C_0(\hh\GG)^*, \tilde{y},y\in\C_0(\hh\GG), b\in\sB^u \bigr\}^{\text{\tiny{norm-cls}}}\\
&=\bigl\{ (\mu\otimes\id\otimes\id)\bigl(\WW^\GG_{23}(x\otimes\I\otimes b)\WW^{\GG^*}_{23}\bigr)\st  \mu\in\C_0(\hh\GG)^*, x\in\C_0(\hh\GG), b\in\sB^u \bigr\}^{\text{\tiny{norm-cls}}}\\
&=\WW^\GG (\I\otimes \sB^u){\WW^\GG}^* 
\end{align*}
where in the third equality we use  Lemma \ref{betaP}.
In order to prove \eqref{hiperbeta} we compute 
\begin{align*}
\C^u_0(\hh\GG)\otimes\sB^u &= \bigl\{(\mu\otimes\id\otimes\id)\big(\hh\Delta_r^{r,u}(\C_0(\hh\GG))\otimes\sB^u\big)\st 
\mu\in\C_0(\hh\GG)^* \bigr\}^{\text{\tiny{norm-cls}}}\\ 
&=\bigl\{ (\mu\otimes\id\otimes\id)((\hh\Delta_r^{r,u}(y)\otimes\I)\WW^\GG_{23}\wW^\GG_{13} (\I\otimes\I\otimes b)\wW^{\GG^*}_{13} \WW^{\GG^*}_{23}(\hh\Delta_r^{r,u}(\tilde{y})\otimes\I))\st
\\&\quad\quad\quad\quad\quad\quad\quad\quad\quad\quad
 \mu\in\C_0(\hh\GG)^*, y,\tilde{y}\in\C_0(\hh\GG), b\in\sB^u \bigr\}^{\text{\tiny{norm-cls}}} \\ 
&=\bigl\{ (\mu\otimes\id\otimes\id)(( y\otimes z\otimes \I)\WW^\GG_{23}\wW^\GG_{13} (\I\otimes\I\otimes b)\wW^{\GG^*}_{13} \WW^{\GG^*}_{23}( \tilde{y} \otimes\tilde{z}\otimes \I))\st
\\&\quad\quad\quad\quad\quad\quad\quad\quad\quad\quad
 \mu\in\C_0(\hh\GG)^*, y, \tilde{y} \in\C_0(\hh\GG), z,\tilde{z}\in\C^u_0(\hh\GG),  b\in\sB^u \bigr\}^{\text{\tiny{norm-cls}}} \\ 
&=\bigl\{ (\mu\otimes\id\otimes\id)(( y\otimes z\otimes \I)\WW^\GG_{23}  (\I\otimes\I\otimes b)  \WW^{\GG^*}_{23}( \tilde{y} \otimes\tilde{z}\otimes \I))\st
\\&\quad\quad\quad\quad\quad\quad\quad\quad\quad\quad
 \mu\in\C_0(\hh\GG)^*, y, \tilde{y} \in\C_0(\hh\GG), z,\tilde{z}\in\C^u_0(\hh\GG),  b\in\sB^u \bigr\}^{\text{\tiny{norm-cls}}} \\&=(\C^u_0(\hh\GG)\otimes\I)\WW^\GG   (\I \otimes \sB^u)  {\WW^\GG}^* (\C^u_0(\hh\GG)\otimes\I)
\end{align*}
 where in the fourth equality we use  Lemma \ref{betaP}. Thus we get Equation \eqref{hiperbeta}. 
\end{proof}
Using Lemma \ref{stsymmlift}, Lemma \ref{weakPodreg} and Theorem \ref{lift} in the context of $\C_0(\GG/\HH)$ we get
\begin{proposition}
Let $\GG$ be a regular locally compact quantum group and $\HH$   a closed quantum subgroup of $\GG$. Then $\C_0^u(\GG/\HH)$  is strongly symmetric. Moreover  it satisfies Podle\'s condition 
\[\Delta_\GG^u(\C_0^u(\GG/\HH))(\C_0^u(\GG)\otimes\I) = \C_0^u(\GG)\otimes\C_0^u(\GG/\HH)\] and the following holds 
\[(\C_0^u(\hh\GG)\otimes\I)\WW^\GG(\I\otimes\C_0^u(\GG/\HH)){\WW^\GG}^* = \C_0^u(\hh\GG)\otimes\C_0^u(\GG/\HH).\] 
\end{proposition}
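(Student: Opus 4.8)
The plan is to specialise the results of this section to $\sB:=\C_0(\GG/\HH)$, viewed as a $\C^*$-subalgebra of $\M(\C_0(\GG))$; this subalgebra exists because $\GG$ is regular \cite{Vaes}. First I would record the properties of $\sB$ that feed the lemmas above. By Example \ref{betapodquot} (that is, \cite[Theorem 8.2]{Vaes}) the algebra $\sB$ satisfies $\beta$-Podle\'s condition; in particular $\beta(\sB)\subset\M(\C_0(\hh\GG)\otimes\sB)$, so $\sB$ is symmetric in the sense of Definition \ref{weakPodandPod}, and $\sB$ satisfies $\beta$-weak Podle\'s condition. Likewise $\sB$ satisfies Podle\'s condition $\Delta_\GG(\sB)(\C_0(\GG)\otimes\I)=\C_0(\GG)\otimes\sB$ \cite{Vaes}, hence weak Podle\'s condition, and taking adjoints (using that $\sB$ is a $\C^*$-algebra) one obtains $\Delta_\GG(\sB)\subset\M(\C_0(\GG)\otimes\sB)$.

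Next I would apply Theorem \ref{lift}: since $\GG$ is regular and $\sB$ satisfies Podle\'s condition, there is a unique $\C^*$-subalgebra $\sB^u\subset\M(\C_0^u(\GG))$, which we denote $\C_0^u(\GG/\HH)$, with $\Lambda_\GG(\sB^u)=\sB$ and $\Delta_\GG^u(\sB^u)(\C_0^u(\GG)\otimes\I)=\C_0^u(\GG)\otimes\sB^u$; this is the second assertion of the proposition. As before, Podle\'s condition for $\sB^u$ implies weak Podle\'s condition and $\Delta_\GG^u(\sB^u)\subset\M(\C_0^u(\GG)\otimes\sB^u)$, so $\sB^u$ is a strong lift of $\sB$ in the sense of Definition \ref{sul}. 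All hypotheses of Lemma \ref{stsymmlift} are now in place, so $\sB^u$ is strongly symmetric --- the first assertion --- and, by \eqref{hiperbeta},
\[
(\C_0^u(\hh\GG)\otimes\I)\WW^\GG(\I\otimes\sB^u){\WW^\GG}^*(\C_0^u(\hh\GG)\otimes\I)=\C_0^u(\hh\GG)\otimes\sB^u.
\]

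It remains to remove the left factor $(\C_0^u(\hh\GG)\otimes\I)$. For this I would apply Lemma \ref{weakPodreg} with $\hh\GG$ in place of $\GG$ --- note that $\hh\GG$ is regular since $\GG$ is --- and with the map $\alpha\colon\sB^u\to\M(\C_0^u(\hh\GG)\otimes\sB^u)$, $\alpha(x)=\WW^\GG(\I\otimes x){\WW^\GG}^*$. Strong symmetry of $\sB^u$ guarantees that $\alpha$ takes values where stated, and the fact that $\WW^\GG$ is a bicharacter gives $(\Delta_{\hh\GG}^u\otimes\id)\circ\alpha=(\id\otimes\alpha)\circ\alpha$, so $\alpha$ is an action of $\hh\GG$ on $\sB^u$ in the sense of Definition \ref{def:pod_cond1}. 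The displayed equality above is precisely the $\alpha$-weak Podle\'s condition (slice the first leg; equivalently, invoke the universal counterpart of Proposition \ref{weak}), so Lemma \ref{weakPodreg} upgrades it to the $\alpha$-Podle\'s condition $\alpha(\sB^u)(\C_0^u(\hh\GG)\otimes\I)=\C_0^u(\hh\GG)\otimes\sB^u$; taking adjoints yields $(\C_0^u(\hh\GG)\otimes\I)\WW^\GG(\I\otimes\sB^u){\WW^\GG}^*=\C_0^u(\hh\GG)\otimes\sB^u$, as required.

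The only genuinely delicate point --- everything else being a matter of matching hypotheses to the known properties of $\C_0(\GG/\HH)$ --- is the last paragraph: one must check that $x\mapsto\WW^\GG(\I\otimes x){\WW^\GG}^*$ restricts to a non-degenerate action of $\hh\GG$ on the lifted algebra $\sB^u$, that the two-sided equality \eqref{hiperbeta} is exactly the $\alpha$-weak Podle\'s condition demanded by Lemma \ref{weakPodreg}, and that regularity passes from $\GG$ to $\hh\GG$.
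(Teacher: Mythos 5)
Your argument is correct and follows essentially the paper's own route: the paper's proof is precisely the one-line combination of Theorem \ref{lift}, Lemma \ref{stsymmlift} and Lemma \ref{weakPodreg} applied to $\C_0(\GG/\HH)$, which is exactly what you spell out, including the use of Lemma \ref{weakPodreg} for the dual ($\hh\GG$-side) action to pass from the two-sided identity \eqref{hiperbeta} to the one-sided equality in the statement. The points you flag at the end --- non-degeneracy of the lifted $\hh\GG$-action on $\C_0^u(\GG/\HH)$, the weak Podle\'s input (which is also supplied by Lemma \ref{betaP}), and regularity of $\hh\GG$ (standard, since regularity is self-dual) --- are indeed the only details to verify, and they go through as you indicate.
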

 
Let $\HH\subset \GG$ be  a compact quantum group. In the final part of this section we shall analyze the covariance  of the conditional expectation $E:\Linf(\GG)\to\Linf(\GG/\HH)$ under $\beta:\Linf(\GG) \to\Linf(\hh\GG)\vtens\Linf( \GG)$. For the formula for $E$ we refer  to \eqref{eq:def_ce_H}. 
\begin{definition} \label{betacov}
Let $\GG$ be a locally compact quantum group,  $\HH$ a compact quantum subgroup of $\GG$,  $E:\Linf(\GG)\to\Linf(\GG)$ the conditional expectation onto $\Linf(\GG/\HH)$ and $\beta:\Linf(\GG)\to\Linf(\hh\GG)\vtens\Linf(\GG)$ the action of $\hh\GG$ on $\Linf(\GG)$ defined in \eqref{betvN}.  We say that $E$ is \emph{$\beta$-covariant} if
\[(\id\otimes E)\circ \beta = \beta\circ E.\]
\end{definition}
Let $\HH$ be a compact quantum group and let us consider the map   $\Theta:\Linf(\HH)\to \Linf(\hh\HH)$  
\[\Theta(x) = (\id\otimes \psi_\HH)({\ww^\HH}^*(\I\otimes x){\ww^\HH}).\]
In \cite[Corollary 3.9]{Izumi} it was proved that $\Theta(x) = \psi_\HH(x)\I$ if and only if $\HH$ is of Kac type. Using the  invariance of the Haar measure under the unitary antipode $\psi_\HH= \psi_\HH\circ R_\HH$ and the formula $(R_{\hh\HH}\otimes R_\HH)(\ww^\HH) = \ww^\HH$ we get the following result. 
\begin{corollary}\label{coro}
Let $\HH$ be a  compact quantum group. Then 
\[(\id\otimes \psi_\HH)({\ww^\HH} (\I\otimes x){\ww^\HH}^*) = \psi_\HH(x)\I\] for all $x\in \Linf(\HH)$ if and only if $\HH$ is of Kac type. 
\end{corollary}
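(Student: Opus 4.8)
The plan is to reduce the statement to Izumi's characterization \cite[Corollary 3.9]{Izumi}, which says precisely that $\Theta(x) := (\id\otimes\psi_\HH)({\ww^\HH}^*(\I\otimes x){\ww^\HH}) = \psi_\HH(x)\I$ for all $x\in\Linf(\HH)$ if and only if $\HH$ is of Kac type. Writing $T(x) := (\id\otimes\psi_\HH)(\ww^\HH(\I\otimes x){\ww^\HH}^*)$ for the map appearing in the corollary, the key is to establish the single identity
\[
T(x) = R_{\hh\HH}\big(\Theta(R_\HH(x))\big),\qquad x\in\Linf(\HH).
\]
Granting this, both implications are immediate. If $\HH$ is of Kac type then $\Theta(\,\cdot\,)=\psi_\HH(\,\cdot\,)\I$, so $T(x) = R_{\hh\HH}\big(\psi_\HH(R_\HH(x))\I\big) = \psi_\HH(x)\I$, using $R_{\hh\HH}(\I)=\I$ and $\psi_\HH\circ R_\HH=\psi_\HH$. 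Conversely, if $T(x)=\psi_\HH(x)\I$ for all $x$, then applying $R_{\hh\HH}$ to both sides of the identity and using that $R_\HH$ is a bijection of $\Linf(\HH)$ with $\psi_\HH\circ R_\HH=\psi_\HH$ yields $\Theta(\,\cdot\,)=\psi_\HH(\,\cdot\,)\I$, whence $\HH$ is of Kac type by \cite[Corollary 3.9]{Izumi}.

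To prove the displayed identity I would apply $R_{\hh\HH}$ to $T(x)$ and push it through the slice. Two routine compatibilities are needed: $R_{\hh\HH}\circ(\id\otimes\psi_\HH) = (\id\otimes\psi_\HH)\circ(R_{\hh\HH}\otimes\id)$, valid since $\psi_\HH$ is the (bounded, normal) Haar state and $R_{\hh\HH}$ is a normal $*$-anti-automorphism; and $(\id\otimes\psi_\HH)\circ(\id\otimes R_\HH) = \id\otimes\psi_\HH$, which is exactly the identity $\psi_\HH\circ R_\HH=\psi_\HH$. Combining them rewrites $R_{\hh\HH}(T(x))$ as $(\id\otimes\psi_\HH)\big((R_{\hh\HH}\otimes R_\HH)(\ww^\HH(\I\otimes x){\ww^\HH}^*)\big)$. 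Now $R_{\hh\HH}\otimes R_\HH$ is an anti-homomorphism, and $(R_{\hh\HH}\otimes R_\HH)(\ww^\HH)=\ww^\HH$, hence it also fixes ${\ww^\HH}^*$; since it sends $\I\otimes x$ to $\I\otimes R_\HH(x)$, it sends $\ww^\HH(\I\otimes x){\ww^\HH}^*$ to ${\ww^\HH}^*(\I\otimes R_\HH(x))\ww^\HH$. Slicing this by $\id\otimes\psi_\HH$ is by definition $\Theta(R_\HH(x))$, and since $R_{\hh\HH}^2=\id$ the identity follows.

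I do not expect a genuine obstacle here: the computation is short, and its content is carried entirely by the two invariance identities $(R_{\hh\HH}\otimes R_\HH)(\ww^\HH)=\ww^\HH$ and $\psi_\HH=\psi_\HH\circ R_\HH$ recorded just above the corollary, together with Izumi's theorem. The only point deserving a line of care is the legitimacy of the slice/antipode interchanges, which is unproblematic because $\HH$ is compact, so $\psi_\HH$ is a bounded normal functional and all the maps involved are normal.
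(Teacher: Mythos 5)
Your proposal is correct and is essentially the paper's own argument: the paper likewise deduces the corollary from Izumi's result \cite[Corollary 3.9]{Izumi} using only $\psi_\HH=\psi_\HH\circ R_\HH$ and $(R_{\hh\HH}\otimes R_\HH)(\ww^\HH)=\ww^\HH$, merely leaving the slice/antipode computation implicit where you spell it out. The identity $T(x)=R_{\hh\HH}\bigl(\Theta(R_\HH(x))\bigr)$ and the interchanges you justify are exactly the intended content, so there is nothing to add.
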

\begin{proposition}\label{lemtr}
Let $\HH$,   $\GG$, $\beta$ and $E$ be as in Definition \ref{betacov}. Then  $E$  is $\beta$-covariant if and only if $\HH$ is of Kac type. 
\end{proposition}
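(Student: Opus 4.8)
The plan is to unwind $(\id\tens E)\comp\beta=\beta\comp E$ using $E=(\id\tens\psi_\HH)\comp\alpha$ together with $\alpha(x)=V(x\tens\I)V^*$ and $V=(\gamma\tens\id)(\ww^\HH)$, and to reduce the resulting identity to the map $\Phi(b):=(\id\tens\psi_\HH)(\ww^\HH(\I\tens b){\ww^\HH}^*)$ controlled by Corollary \ref{coro}. All computations take place on $\Linf(\hh\GG)\vtens\Linf(\GG)\vtens\Linf(\HH)$. First I would record the two composites: since $\alpha$ is a normal $*$-homomorphism which on the last two legs acts by conjugation with $V_{23}$, and $\ww^\GG_{12}$ commutes with the third‑leg slice $\psi_\HH$, one gets $\beta(E(x))=(\id\tens\id\tens\psi_\HH)\big(\ww^\GG_{12}V_{23}(x\tens\I\tens\I)V_{23}^*{\ww^\GG_{12}}^*\big)$ and $(\id\tens E)(\beta(x))=(\id\tens\id\tens\psi_\HH)\big(V_{23}\ww^\GG_{12}(x\tens\I\tens\I){\ww^\GG_{12}}^*V_{23}^*\big)$.

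Next I would invoke the intertwining relation $V_{23}\ww^\GG_{12}V_{23}^*=\ww^\GG_{12}V_{13}$, which expresses that $\alpha$ is a $\GG$-equivariant action of the closed quantum subgroup $\HH$ (for $\HH=\GG$ it is precisely the pentagon equation for $\ww^\GG$; the general case follows from the defining properties of $\gamma$ and $V$, cf.\ \cite{Vaes,DKSS}). Substituting, conjugating out the invertible $\ww^\GG_{12}$, and writing $\alpha(x)=\sum_i a_i\tens b_i$, the identity $(\id\tens E)(\beta(x))=\beta(E(x))$ for a fixed $x$ becomes $(\id\tens\id\tens\psi_\HH)\big(V_{13}\,{\alpha(x)}_{23}\,V_{13}^*\big)=\I\tens E(x)$. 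Because $V=(\gamma\tens\id)(\ww^\HH)$ and $\gamma$ is unital, the left-hand side equals $\sum_i\gamma(\Phi(b_i))\tens a_i$, while $E(x)=\sum_i\psi_\HH(b_i)a_i$. Hence $E$ is $\beta$-covariant if and only if, for every $x\in\Linf(\GG)$, $\sum_i\big(\gamma(\Phi(b_i))-\psi_\HH(b_i)\I\big)\tens a_i=0$, where $\alpha(x)=\sum_i a_i\tens b_i$.

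The equivalence then falls out. If $\HH$ is of Kac type, Corollary \ref{coro} gives $\Phi=\psi_\HH(\cdot)\I$, so every summand vanishes and $E$ is $\beta$-covariant. Conversely, if $E$ is $\beta$-covariant, slicing the above identity in the second leg by an arbitrary $\rho\in\Linf(\GG)_*$ yields $\gamma\big(\Phi((\rho\tens\id)\alpha(x))\big)=\psi_\HH((\rho\tens\id)\alpha(x))\I$; since $\gamma$ is injective, $\Phi(c)=\psi_\HH(c)\I$ for every $c$ in the $\sigma$-weakly closed linear span $\sN_0$ of $\{(\rho\tens\id)\alpha(x):\rho\in\Linf(\GG)_*,\,x\in\Linf(\GG)\}$. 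Once $\sN_0=\Linf(\HH)$ is known, $\Phi=\psi_\HH(\cdot)\I$ on all of $\Linf(\HH)$, and Corollary \ref{coro} forces $\HH$ to be of Kac type.

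The hard part will be this density statement $\sN_0=\Linf(\HH)$, together with keeping the leg‑numbering straight — the algebras $\Linf(\GG)$, $\Linf(\hh\GG)$ and $\gamma(\Linf(\hh\HH))$ all act on $\Ltwo(\GG)$ and do not commute, so the intertwining relation in step two must be applied on exactly the right legs. To obtain $\sN_0=\Linf(\HH)$ I would use that the $\HH$-action on $\Linf(\GG)$ attached to a closed quantum subgroup has weak-$*$ dense matrix coefficients: as $\gamma$ is a faithful normal embedding, $\{(\omega\tens\id)(V):\omega\in B(\Ltwo(\GG))_*\}$ already spans a $\sigma$-weakly dense subspace of $\Linf(\HH)$, and from this, using that $V$ is unitary and implements $\alpha$, one deduces the same for $\sN_0$; alternatively one passes to the $\C^*$-level, where for compact $\HH$ the action is a reduction of $(\id\tens\pi^u)\comp\Delta_\GG^u$ for the canonical surjection $\pi^u\in\Mor(\C_0^u(\GG),\C^u(\HH))$, and the cancellation property of $\Delta_\GG^u$ gives the density at once. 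Either route closes the argument.
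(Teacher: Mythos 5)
Your argument is essentially the paper's own proof: the same reduction via $E=(\id\otimes\psi_\HH)\circ\alpha$ and the intertwining relation $(\id\otimes\alpha)(\ww^\GG)=\ww^\GG_{12}V_{13}$ (your $V_{23}\ww^\GG_{12}V_{23}^*=\ww^\GG_{12}V_{13}$), leading to the condition $(\id\otimes\psi_\HH)(\ww^\HH(\I\otimes c){\ww^\HH}^*)=\psi_\HH(c)\I$ on slices of $\alpha$ and then to Corollary \ref{coro}. The ``hard part'' you flag, namely that $\{(\rho\otimes\id)\alpha(x)\st \rho\in\Linf(\GG)_*,\,x\in\Linf(\GG)\}$ spans a $\sigma$-weakly dense subspace of $\Linf(\HH)$, is exactly \cite[Theorem 3.6 (3)]{DKSS}, which the paper simply cites, so no new proof is needed (and note the harmless slip that $x$ should sit on the $\Linf(\GG)$ leg, i.e.\ $\I\otimes x\otimes\I$, in your first display).
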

\begin{proof} 
Let $\alpha: \Linf(\GG)\to  \Linf(\GG)\vtens\Linf(\HH) $ be the corresponding action of $\HH$ on $\Linf(\GG)$. Using  \cite[Theorem 3.6 (3)]{DKSS} we conclude that 
\begin{equation}\label{linfH}
\Linf(\HH) = \big\{(\omega\otimes\id)(\alpha(x)):\omega\in\Linf(\GG)_*, x\in\Linf(\GG)\big\}^{\textrm{\tiny{$\sigma$-weak cls}}}.
\end{equation} 
 The conditional expectation is given by the formula $E=(\id\otimes \psi_\HH)\circ\alpha$ thus using the formula  $(\id\otimes\alpha)(\ww^\GG) = \ww^\GG_{12}V_{13}$, where $V$ is given by \eqref{eq:def_V},  we get
\begin{align*}(\id\otimes E)\circ\beta(x)&=
(\id\otimes E)({\ww^\GG}(\I\otimes x){\ww^\GG}^*) \\& = {\ww^\GG} (\id\otimes \id\otimes \psi_\HH)(V_{13}(\I\otimes \alpha(x))V_{13}^*){\ww^\GG}^*.
\end{align*}
On the other hand 
\[\beta\circ E(x) = \ww^\GG(\I\otimes(\id\otimes\psi_\HH)\circ\alpha(x)){\ww^\GG}^*.\]
In particular $E$ is $\beta$-covariant if  and only if 
\[(\id\otimes \id\otimes \psi_\HH)(\ww^\HH_{13}(\I\otimes \alpha(x))\ww^{\HH^*}_{13}) = \I\otimes\big((\id\otimes\psi_\HH)\circ\alpha(x)\big) \] 
 which by \eqref{linfH} is equivalent with 
\[(\id\otimes \psi_\HH)(\ww^\HH (\I\otimes x){\ww^\HH}^* ) =\psi_\HH (x) \I \] for all $x\in\Linf(\HH)$. 
Using Corollary \ref{coro} we get the desired equivalence. 
\end{proof}
Let $\GG$ be a  regular locally compact quantum group and  
  $\HH$  a Kac type compact quantum subgroup of  $\GG$. In this case the proof that $\C_0(\GG/\HH)$ satisfies   $\beta$-Podle\'s is easy (see Example \ref{betapodquot} for the discussion of the general case) and follows from the following computation. 
\begin{align*}\beta(\C_0(\GG/\HH))(\C_0(\hh\GG)\otimes \I)&= \beta(E(\C_0(\GG)))(\C_0(\hh\GG)\otimes \I)\\
&=(\id\otimes E)(\beta(\C_0(\GG))(\C_0(\hh\GG)\otimes \I))\\
&=(\id\otimes E)(\C_0(\hh\GG)\otimes \C_0(\GG)) = \C_0(\hh\GG)\otimes \C_0(\GG/\HH).
\end{align*}

 \section{Integrable coideals and idempotent states on $\GG$}\label{integcoid}
Let $\omega\in S(\C_0^u(\GG))$ be an idempotent state on $\GG$ and let $E:\Linf(\GG)\to\Linf(\GG)$ be the conditional expectation assigned to $\omega$. Let $\sN = E( \Linf(\GG))$ be  a coideal in $\Linf(\GG)$ assigned to $\omega$. 
\begin{proposition}\label{proptauin}
Let $\omega$ and $\sN$ be as above. Then 
\begin{itemize}
\item $\sN$ is integrable;
\item $\tau_t(\sN) = \sN$ for all $t\in\mathbb{R}$.
\end{itemize}
\end{proposition}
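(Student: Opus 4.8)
The plan is to exploit the explicit formula $E = (\id\otimes\omega)\circ\Delta_\GG^u$ at the universal level together with its reduced incarnation $E(x) = \omega\staru x = (\id\otimes\omega)(\wW^\GG(x\otimes\I){\wW^\GG}^*)$, and the fact (already recorded in the excerpt, see \eqref{prescgr1}) that $E\circ\tau_t = \tau_t\circ E$ for all $t\in\RR$.

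\emph{Scaling group invariance.} This is the easy bullet. Since $E\circ\tau_t = \tau_t\circ E$, if $x\in\sN$, i.e. $E(x) = x$, then $E(\tau_t(x)) = \tau_t(E(x)) = \tau_t(x)$, so $\tau_t(x)\in\sN$; applying this to $\tau_{-t}$ as well gives $\tau_t(\sN) = \sN$. So I would dispose of the second item in a single line.

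\emph{Integrability.} This is where the work is. The goal is to show that $\psi_\GG|_\sN$ is semifinite, equivalently that the set of $\psi_\GG$-integrable positive elements is dense in $\sN^+$. The natural strategy is to produce an operator-valued weight from $\Linf(\GG)$ onto $\sN$ that is compatible with $\psi_\GG$, or more directly to show that $\psi_\GG\circ E$ restricted appropriately is well behaved. A clean route: recall that the \emph{left} Haar weight is related to the conditional expectation via the quotient picture — when $\omega$ comes from a compact quantum subgroup $\HH$, $\sN = \Linf(\GG/\HH)$ and integrability of $\sN$ is classical. In general, one uses that $E$ is $\Delta_\GG$-equivariant, so $\sN$ is a coideal, and coideals coming from idempotent states carry an invariant weight: indeed, consider the weight $\Phi = \psi_\GG|_{\sN}$ and show it is semifinite by exhibiting, for a dense set of $x\in\sN^+$, elements of the form $E(y)$ with $y\in\Linf(\GG)^+$ a $\psi_\GG$-integrable element such that $E(y)$ is still $\psi_\GG$-integrable. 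The key computation is that $\psi_\GG(E(y)) = \psi_\GG((\id\otimes\omega)\Delta_\GG^u(y))$, and by right invariance of $\psi_\GG$ combined with $\omega$ being a state (hence $(\id\otimes\omega)\Delta_\GG^u$ being unital completely positive and, crucially, $\psi_\GG$-preserving once one checks $\psi_\GG\circ E = \psi_\GG$), one gets $\psi_\GG(E(y)) = \psi_\GG(y) < \infty$. So I would: (1) verify $\psi_\GG\circ E = \psi_\GG$ on the $\psi_\GG$-integrable part — this uses the strong invariance relation connecting $\Delta_\GG$, $\psi_\GG$ and the scaling/modular structure, and the fact that $\omega$ is $\tau^u_t$-invariant, see \eqref{prescgr}; (2) conclude that for every $\psi_\GG$-integrable $y\in\Linf(\GG)^+$, the element $E(y)\in\sN^+$ is again $\psi_\GG$-integrable with the same value; (3) observe that $\{E(y) : y\in\Linf(\GG)^+\text{ $\psi_\GG$-integrable}\}$ is dense in $\sN^+$ because $E$ is a normal conditional expectation onto $\sN$ and the $\psi_\GG$-integrable positive elements are dense in $\Linf(\GG)^+$, and $E$ maps this dense set into the analogous set in $\sN$.

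\emph{Expected main obstacle.} The delicate point is step (1): showing that the conditional expectation $E$ attached to an idempotent state \emph{preserves the right Haar weight} $\psi_\GG$ (at least on its domain of finiteness). This is not purely formal — it is exactly the kind of statement that, for the left weight $\varphi_\GG$, uses that $\omega$ is idempotent rather than just a state, and for the right weight requires pushing the argument through the unitary antipode or the modular element $\delta$. I would handle it by first establishing $\varphi_\GG$-invariance of $E$ (using $E^u = (\id\otimes\omega)\circ\Delta_\GG^u$, left invariance of $\varphi_\GG$, and $\omega$ a state, so that $\varphi_\GG\circ E = \varphi_\GG(\cdot)\,\omega(\I) = \varphi_\GG$ on suitable elements), and then transporting to $\psi_\GG$ via the relation $\psi_\GG = \varphi_\GG(\delta^{1/2}\cdot\delta^{1/2})$ together with the $\tau_t$-invariance of $E$ from the first bullet and the behaviour of $\delta$ under the scaling group; the $\tau$-invariance of $\sN$ is precisely what makes this transport legitimate. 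Once $\psi_\GG\circ E = \psi_\GG$ is in hand, integrability follows as sketched.
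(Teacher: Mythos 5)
Your second bullet coincides with the paper's argument, and the skeleton of your integrability argument (prove $\psi_\GG\circ E=\psi_\GG$, then push a net of integrable elements through the normal map $E$) is also the paper's. The flaw is in your \emph{Expected main obstacle} paragraph, where you have the roles of the two Haar weights reversed. Since $E(x)=\omega\staru x=(\id\otimes\omega)(\wW^\GG(x\otimes\I){\wW^\GG}^*)$ places the state $\omega$ on the \emph{second} leg of the comultiplication, right invariance of $\psi_\GG$, namely $\psi_\GG\big((\id\otimes\mu)\Delta_\GG(x)\big)=\mu(\I)\psi_\GG(x)$, applies verbatim: the invariance $\psi_\GG\circ E=\psi_\GG$ is purely formal and idempotency of $\omega$ plays no role in it. Written carefully for the unbounded weight, this is the paper's two-line computation: for $x\geq 0$,
\begin{equation*}
\psi_\GG(x)\I=(\psi_\GG\otimes\id)\big((\id\otimes E)\Delta_\GG(x)\big)=(\psi_\GG\otimes\id)\big(\Delta_\GG(E(x))\big)=\psi_\GG(E(x))\I,
\end{equation*}
using only right invariance, unitality and normality of $E$, and the covariance $\Delta_\GG\circ E=(\id\otimes E)\circ\Delta_\GG$ recorded in the preliminaries; no unitary antipode, no modular element, no transport through the scaling group. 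Semifiniteness of $\psi_\GG|_\sN$ then follows as you indicate, e.g.\ by taking an increasing net $x_i\nearrow\I$ of $\psi_\GG$-integrable positive elements and noting that $E(x_i)\nearrow\I$ in $\sN^+$ with $\psi_\GG(E(x_i))=\psi_\GG(x_i)<\infty$.

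By contrast, the step you propose to do first --- ``$\varphi_\GG\circ E=\varphi_\GG(\cdot)\,\omega(\I)$ by left invariance of $\varphi_\GG$'' --- is not valid: left invariance controls $\varphi_\GG\big((\mu\otimes\id)\Delta_\GG(x)\big)$, i.e.\ a slice on the \emph{first} leg, whereas $E$ slices the second leg, so no invariance of $\varphi_\GG$ is available by that computation. In fact $\varphi_\GG$-invariance of $E$ is the genuinely nontrivial statement here (it does use idempotency of $\omega$, via \cite[Theorem 1]{SaS}), and the paper points out in Remark \ref{remphiexp} exactly this asymmetry: the left-weight statement cannot be invoked at this stage and is only recovered afterwards. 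So your proposed detour through $\varphi_\GG$ and the modular element routes the easy statement through the hard (and at that point unproven) one. Dropping that paragraph and keeping the direct right-invariance computation you already sketch yields precisely the paper's proof.
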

\begin{proof}Integrability of $\sN$ is equivalent with $\psi_\GG|_\sN$ being semifinite. 
Let us take $x\in\Linf(\GG)$, $x\geq 0$. Then we have 
\[\begin{split}\psi_\GG(x)\I &= (\psi_\GG\otimes\id)((\id\otimes E)\circ\Delta_\GG(x))\\
&=(\psi_\GG\otimes\id)(\Delta_\GG(E(x))\\
&=\psi_\GG(E(x))\I
\end{split}\] which implies the required semifiniteness. Using \eqref{prescgr1} we get \[\tau_t(\sN) = \tau_t(E(\Linf(\GG))) = E( \tau_t(\Linf(\GG))) = \sN.\]
\end{proof}
In what follows we shall prove the converse of Proposition \ref{proptauin}. 
\begin{theorem}\label{thminterg}
Let $\sN$ be an integrable coideal of $\Linf(\GG)$ which is $\tau_t$-invariant. Then there exists  
a unique conditional expectation $E: \Linf(\GG)\to\Linf(\GG)$ onto $\sN$ such that $\psi_\GG(x) = \psi_\GG(E(x))$ for all $x\in\Linf(\GG)^+$. Moreover   there exists a unique idempotent state $\omega\in \C_0^u(\GG)^*$ such that for every $x\in \Linf(\GG)$, $E(x)=\omega\staru x$
and $\sN=\bigl\{ x\in\Linf(\GG) : \omega\staru x = x \bigr\}$.
\end{theorem}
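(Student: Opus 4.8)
The plan is to build the idempotent state $\omega$ by first producing the conditional expectation $E$ onto $\sN$ from the integrability and $\tau$-invariance hypotheses, and then verifying that $E$ is implemented by convolution with an idempotent state. The starting point is a theorem of Takesaki-type: an integrable coideal $\sN\subset\Linf(\GG)$ carries a canonical operator-valued weight, and because $\psi_\GG|_\sN$ is semifinite, one can use the invariance of $\psi_\GG$ under $\Delta_\GG$ together with the left-coideal property $\Delta_\GG(\sN)\subset\Linf(\GG)\vtens\sN$ to show that $\psi_\GG$ restricted to $\sN$ is relatively invariant, which forces the associated operator-valued weight to be an honest normal conditional expectation $E_0:\Linf(\GG)\to\sN$ with $\psi_\GG\circ E_0=\psi_\GG$. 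Uniqueness of such a $\psi_\GG$-preserving conditional expectation is standard (two of them agree on $\sN$ and preserve the faithful weight, hence coincide via the modular theory of $\psi_\GG$). The key structural input one needs here is that $E_0$ is automatically $\Delta_\GG$-covariant, i.e. $\Delta_\GG\circ E_0=(\id\otimes E_0)\circ\Delta_\GG$; this should follow from uniqueness by the standard trick of checking that $(\id\otimes E_0)\circ\Delta_\GG$ restricted appropriately is again a $\psi_\GG$-preserving conditional-expectation-like map onto $\Delta_\GG(\Linf(\GG))\cap(\Linf(\GG)\vtens\sN)$, using the right-invariance of $\psi_\GG$.

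\textbf{From $E$ to $\omega$.}
Once $E=E_0$ is in hand with its covariance, I would extract the state. Because $\sN$ is $\tau_t$-invariant and $E$ is the \emph{unique} $\psi_\GG$-preserving expectation onto $\sN$, conjugating $E$ by $\tau_t$ gives another such expectation (using that $\psi_\GG\circ\tau_t=\nu^{-t}\psi_\GG$ is a scalar multiple, which does not affect the preservation property after normalization), hence $\tau_t\circ E=E\circ\tau_t$. Next, define $\omega$ as a suitable weak$^*$-limit, or more directly via the formula $\omega = \eps\circ(\text{universal lift of }E)$: the point is that $E$, being $\Delta_\GG$-covariant and mapping onto a coideal, is itself of convolution form. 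Concretely one shows $E(x)=(\id\otimes\omega_0)\Delta_\GG(x)$ for a functional $\omega_0\in\Linf(\GG)^*$; idempotency $E\circ E=E$ translates into $\omega_0\starr\omega_0=\omega_0$. Then, to land in $S(\C_0^u(\GG))$, one uses that an idempotent functional of convolution type extends uniquely to the universal level: precisely, the equality $E\circ\tau_t=\tau_t\circ E$ together with the covariance lets one lift $E$ to $E^u=(\id\otimes\omega)\circ\Delta_\GG^u$ for a genuine idempotent \emph{state} $\omega\in S(\C_0^u(\GG))$ — here one invokes the preservation of idempotent states by $\tau_t^u$, equation~\eqref{prescgr}, in reverse, i.e. to recognize that the reduced data determines universal data. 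Finally $\sN=\{x:\omega\staru x=x\}$ is immediate from $\sN=E(\Linf(\GG))=\{x:E(x)=x\}$ and $E(x)=\omega\staru x$.

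\textbf{Main obstacle.}
The hard part will be the very first step: upgrading "integrable coideal, $\tau$-invariant" to "there is a $\psi_\GG$-preserving conditional expectation which is moreover $\Delta_\GG$-covariant." Existence of \emph{some} operator-valued weight $\sN$-valued weight is automatic, but showing it is bounded (an actual conditional expectation) and scalar-preserving requires genuinely using that $\sN$ is a left coideal — the computation in Proposition~\ref{proptauin} shows the necessary direction $\psi_\GG\circ E=\psi_\GG$, and one needs the converse mechanism. I expect this to rest on the machinery of \cite{Vaes} / \cite{embed} on coideals and the codual $\dd\sN$: the codual $\dd\sN=\sN'\cap\Linf(\hh\GG)$ is a coideal of $\Linf(\hh\GG)$, and integrability of $\sN$ should correspond to a dual property ensuring the relative commutant behaves well; the $\tau$-invariance is exactly what is needed to kill the potential obstruction (a nontrivial scaling/modular discrepancy) so that the operator-valued weight descends to a conditional expectation. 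Everything after that — the convolution form of $E$, idempotency of $\omega_0$, the universal lift, and the two uniqueness claims — is comparatively routine, modulo careful bookkeeping with the half-lifted multiplicative unitaries $\wW^\GG$ and formulas \eqref{LDDrL}.
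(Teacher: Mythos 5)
Your overall skeleton (a Takesaki-type expectation, then $\Delta_\GG$-covariance, then an idempotent state implementing $E$ by convolution) is the same as the paper's, but at the two places where the real work happens you offer speculation rather than argument. First, to invoke Takesaki's theorem \cite[Theorem IX.4.2]{Tak} and obtain the unique $\psi_\GG$-preserving normal conditional expectation onto $\sN$, one must prove that $\sN$ is globally invariant under the modular group $\sigma^{\prime}_t$ of $\psi_\GG$. You flag exactly this as the ``main obstacle'' and guess that it follows from properties of the codual $\dd{\sN}$ and the machinery of \cite{Vaes}, but you never prove it; ``relatively invariant, which forces the operator-valued weight to be an honest conditional expectation'' is not an argument. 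The paper's solution is three lines and does not pass through the codual: by \eqref{modular}, $(\sigma_t\otimes\sigma^{\prime}_{-t})\Delta_\GG(\sN)=\Delta_\GG(\tau_t(\sN))=\Delta_\GG(\sN)\subset\Linf(\GG)\vtens\sN$, and since $\sN$ is recovered by slicing $\Delta_\GG(\sN)$ with normal functionals (\cite[Corollary 2.6]{embed}), this yields $\sigma^{\prime}_{-t}(\sN)=\sN$; Takesaki's theorem then applies verbatim and also gives the uniqueness claim.

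Second, the passage from the half-covariance $(\id\otimes E)\circ\Delta_\GG\circ E=\Delta_\GG\circ E$ (immediate from the coideal property) to full covariance $(\id\otimes E)\circ\Delta_\GG=\Delta_\GG\circ E$ cannot be obtained by the ``uniqueness trick'' as you describe it: you would need to know beforehand that the image of $(\id\otimes E)\circ\Delta_\GG$ lies in $\Delta_\GG(\Linf(\GG))$, which is essentially the statement to be proved, and there is no evident weight on $\Linf(\GG)\vtens\Linf(\GG)$ for which a uniqueness theorem would compare the two maps. The paper argues concretely: let $P$ be the projection on $\Ltwo(\GG)$ implementing $E$ (Kadison's inequality plus $\psi_\GG$-preservation), and let $U$ be Vaes's canonical implementation of $\Delta_\GG$, which satisfies $\big((\omega_{\xi,\delta^{1/2}\zeta}\otimes\id)U\big)\eta_\GG(x)=\eta_\GG\big((\omega_{\xi,\zeta}\otimes\id)\Delta_\GG(x)\big)$; slicing the half-covariance gives $PxP=xP$ for every slice $x$ of $U$, and since these slices generate a von Neumann algebra, taking adjoints yields $Px=xP$, which is equivalent to full covariance. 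Finally, your ``from $E$ to $\omega$'' paragraph essentially re-derives, without proof, the content of \cite[Theorem 5]{SaS} -- and your intermediate claim $E(x)=(\id\otimes\omega_0)\Delta_\GG(x)$ with $\omega_0\in\Linf(\GG)^*$ is not the right formulation, since the state lives on $\C_0^u(\GG)$ and acts through $\wW^\GG$ via $\omega\staru x$; once covariance is in hand the paper simply invokes that theorem, which is the clean way to finish. So the architecture is right, but the two decisive steps are missing or would not work as described.
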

\begin{proof}
$\sN$ is an integrable von Neumann subalgebra of $\Linf(\GG)$ so the restriction of right Haar weight $\psi_\GG$ to $\sN$ is semifinite.  Let us show that since $\sN$ is preserved by $\tau_t$,  it must be also preserved by  $\sigma^{\prime}_t$. For the latter we first use \eqref{modular} and the fact that $\sN$ is a coideal, i.e.
\begin{equation}\label{sigma-inv}
(\sigma_t\otimes\sigma^{\prime}_{-t})\circ\Delta_\GG(\sN) = \Delta_\GG\circ\tau_t(\sN) = \Delta_\GG  (\sN)\subset \Linf(\GG)\vtens\sN.
\end{equation} Slicing \eqref{sigma-inv} and using \cite[Corollary 2.6]{embed} we get 
\[\begin{split}\sN &= \sigma^{\prime}_{-t} \left(\big\{(\omega\circ \sigma_t\otimes\id)(\Delta_\GG(\sN))\st \omega\in B(\Ltwo(\GG))_*\big\}^{\textrm{\tiny{$\sigma$-weak cls}}}\right)\\&= \sigma^{\prime}_{-t} \left(\big\{(\omega\otimes\id)(\Delta_\GG(\sN))\st\omega\in B(\Ltwo(\GG))_*\big\}^{\textrm{\tiny{$\sigma$-weak cls}}}\right) = \sigma^{\prime}_{-t}(\sN).
\end{split}\]
 Using \cite[Theorem IX.4.2]{Tak} we conclude that  there exists a unique normal conditional expectation $E:\Linf(\GG)\to \Linf(\GG)$ onto $\sN$ which preserves $\psi_\GG$. Using the Kadison inequality \[E(x)^*E(x)\leq E(x^*x)\] we can see that $E$ admits the Hilbert space extension, i.e. a projection  $P:\Ltwo(\GG)\to\Ltwo(\GG)$ such that 
 \begin{equation}\label{def:P}P\eta_\GG(x) = \eta_\GG(E(x))\end{equation} for all $x\in\Linf(\GG)$ such that $\psi_\GG(x^*x)<\infty$.  

Let us note that since $E$ is a conditional expectation onto $\sN$, the coideal property of $\sN$ yields
\begin{equation}\label{eqdef}
 (\id\otimes E)\circ\Delta_\GG\circ E=\Delta_\GG\circ E
\end{equation}   
Our aim now is to show that  $(\id\otimes E)\circ\Delta_\GG=\Delta_\GG\circ E$. In order to prove it
let us consider the canonical implementation  $U\in\Linf(\GG)\bar\otimes\B(\Ltwo(\GG))$ of $\Delta_\GG$ (see \cite{VaesCan}). Let us  recall that $U$ satisfies 
\[\begin{split}\Delta_\GG(x) &= U(\I\otimes x)U^*,\\
(\Delta_\GG\otimes\id)(U) &= U_{23}U_{13}.
\end{split}
\] 
Using \cite[Proposition 2.4]{VaesCan} we get the explicit formula defining $U$:
 \begin{equation}\label{def:U_ptop}\big((\omega_{\xi,\delta^{\frac{1}{2}}\zeta}\otimes\id)U\big)\eta_\GG(x) = \eta_\GG\big((\omega_{\xi, \zeta}\otimes\id)\Delta_\GG(x)\big)\end{equation}    for all $\zeta\in D(\delta^{\frac{1}{2}})$; here  we view the modular element $\delta$ as an unbounded operator acting on $\Ltwo(\GG)$. 
Slicing \eqref{eqdef} with $(\omega_{\xi, \zeta}\otimes\id)$,   applying $\eta_\GG$ to the result, using \eqref{def:P} and \eqref{def:U_ptop}  we get  $PxP = xP$ for all $x\in \sM$ where  
 \begin{equation}\label{defM} \sM = \big\{(\omega_{\xi, \eta}\otimes\id)(U):\xi,\eta\in\Ltwo(\GG)\big\}^{\textrm{\tiny{$\sigma$-weak cls}}}.\end{equation}
 Since $U$ is a representation of $\GG$,  $\sM$ forms a von Neumann algebra. 
In particular having $PxP = xP$ satisfied   for all $x\in\sM$ we easily conclude that $Px = xP$. This in turn is equivalent with $(\id\otimes E)\circ\Delta_\GG = \Delta_\GG\circ E$. 

We conclude by using  \cite[Theorem 5]{SaS}, which yields the    unique  idempotent state $\omega\in S(\C_0^u(\GG))$ such that 
$E(x)=\omega\staru x$. 
\end{proof}
Having Proposition \ref{proptauin} and Theorem \ref{thminterg} and using \cite[Theorem 1]{SaS} we get 
\begin{corollary}
There is a 1-1 correspondence between integrable   coideals $\sN\subset\Linf(\GG)$ preserved by  $\tau_t$   and idempotent states on $\GG$, where  denoting the conditional expectation given by $\omega$ with  $E:\Linf(\GG)\to\Linf(\GG)$,  we have $\sN = E(\Linf(\GG))$. Moreover   $E$   preserves $\psi_\GG$ and $\varphi_\GG$.
\end{corollary}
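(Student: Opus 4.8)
The plan is to assemble the corollary from the two preceding results, so I would first recall the setup: by Proposition \ref{proptauin}, any idempotent state $\omega$ on $\GG$ produces a coideal $\sN = E(\Linf(\GG))$ which is integrable and $\tau_t$-invariant; conversely, by Theorem \ref{thminterg}, any integrable $\tau_t$-invariant coideal $\sN\subset\Linf(\GG)$ arises from a unique idempotent state $\omega\in S(\C_0^u(\GG))$ via $\sN=\{x\in\Linf(\GG):\omega\staru x=x\}$, with an associated $\psi_\GG$-preserving conditional expectation $E$ onto $\sN$. So the two assignments $\omega\mapsto\sN$ and $\sN\mapsto\omega$ are already in place; what remains is to check they are mutually inverse and to verify the final sentence about $\varphi_\GG$.

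First I would check that starting from $\omega$, passing to $\sN=E(\Linf(\GG))$, and then applying Theorem \ref{thminterg} returns the same idempotent state. The conditional expectation produced by Theorem \ref{thminterg} is the \emph{unique} $\psi_\GG$-preserving normal conditional expectation onto $\sN$; but the $E$ coming from $\omega$ is $\psi_\GG$-preserving (this is exactly the computation $\psi_\GG(E(x))\I = \psi_\GG(x)\I$ in the proof of Proposition \ref{proptauin}) and is a conditional expectation onto $\sN$, hence by uniqueness it coincides with the one from Theorem \ref{thminterg}. Then \cite[Theorem 5]{SaS}, invoked in the proof of Theorem \ref{thminterg}, attaches to this $E$ a unique idempotent state, which must therefore be the original $\omega$. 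Conversely, starting from an integrable $\tau_t$-invariant coideal $\sN$, Theorem \ref{thminterg} gives $\omega$ with $E(\Linf(\GG)) = \{x:\omega\staru x = x\}$; but the image of a conditional expectation onto $\sN$ is $\sN$ itself, so we recover $\sN$. This closes the bijection.

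Next I would address the last assertion, that $E$ preserves both $\psi_\GG$ and $\varphi_\GG$. That $E$ preserves $\psi_\GG$ is built into Theorem \ref{thminterg}. For $\varphi_\GG$, the cleanest route is to invoke \cite[Theorem 1]{SaS} as the corollary statement advertises: that result characterizes idempotent states via $\varphi_\GG$-expected (indeed expected, i.e.\ bi-invariant-measure-preserving) left-invariant von Neumann subalgebras, and in particular the conditional expectation attached to an idempotent state preserves $\varphi_\GG$ as well. Alternatively one can argue directly: $\varphi_\GG = \psi_\GG\circ R$ up to the modular element, $E$ commutes with $\tau_t$ by \eqref{prescgr1}, and $\sN$ being a coideal preserved by $\tau_t$ forces the relevant intertwining; but citing \cite[Theorem 1]{SaS} is shorter and is what the phrasing "using \cite[Theorem 1]{SaS}" signals.

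The main obstacle, such as it is, is purely bookkeeping: one must make sure the conditional expectation $E$ is genuinely uniquely determined by the pair (coideal $\sN$, invariance under $\psi_\GG$) so that the two constructions glue without ambiguity — this is guaranteed by \cite[Theorem IX.4.2]{Tak} as used in the proof of Theorem \ref{thminterg} — and one must confirm that the idempotent state is uniquely pinned down by $E$, which is the content of \cite[Theorem 5]{SaS}. Once these uniqueness statements are cited, the corollary is immediate, with no further computation needed.
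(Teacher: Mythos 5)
Your proposal is correct and follows essentially the same route as the paper, which obtains the corollary by combining Proposition \ref{proptauin} and Theorem \ref{thminterg} and citing \cite[Theorem 1]{SaS} for the preservation of $\varphi_\GG$. The extra bookkeeping you supply (uniqueness of the $\psi_\GG$-preserving conditional expectation via \cite[Theorem IX.4.2]{Tak} and uniqueness of the idempotent state via \cite[Theorem 5]{SaS} to see the two assignments are mutually inverse) is exactly what the paper leaves implicit, and it is sound.
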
\begin{remark}\label{remphiexp}
 Let us note that in the course of the proof of Theorem \ref{thminterg} we could not use the left version of \cite[Corollary 3]{SaS} and immediately conclude the existence of $\omega$. This would be possible knowing that   $\varphi_\GG$  is $E$-invariant. Actually using the techniques of the proof of Theorem \ref{thminterg} we can prove   Theorem  \ref{thminterg1}  which is a strengthened  version of \cite[Theorem 1, v]{SaS}: there is a   1-1 correspondence between  idempotent states on $\GG$ and $\psi_\GG$-expected left-invariant von Neumann subalgebras of $\Linf(\GG)$.  
\end{remark} 
\begin{theorem}\label{thminterg1} 
Let $\sN$ be a $\psi_\GG$-expected  coideal  in $\Linf(\GG)$. Then the conditional expectation  $E$ onto $\sN$ satisfies $(\id\otimes E)\circ\Delta_\GG = \Delta_\GG\circ E$. In particular  there exists a unique idempotent state $\omega\in \C_0^u(\GG)^*$ such that for every $x\in \Linf(\GG)$, $E(x)=\omega\staru x$
and $\sN=\bigl\{ x\in\Linf(\GG) : \omega\staru x = x \bigr\}$.
\end{theorem}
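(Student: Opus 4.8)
The plan is to adapt the proof of Theorem \ref{thminterg}, the key observation being that its Hilbert-space part never used the $\tau_t$-invariance or the integrability of $\sN$: those hypotheses entered only at the very beginning, through Takesaki's theorem, in order to \emph{produce} the conditional expectation. Here the $\psi_\GG$-expectedness hypothesis hands us $E$ directly, so I would begin from that point. First I would record that, since $\sN$ is the range of a $\psi_\GG$-preserving normal conditional expectation, \cite[Theorem IX.4.2]{Tak} forces $\sN$ to be invariant under the modular group $\sigma'_t$ of $\psi_\GG$ and shows that such a conditional expectation is unique; this legitimizes the phrase ``the conditional expectation $E$ onto $\sN$'' and plays the role that the derivation of $\sigma'_t$-invariance plays at the start of the proof of Theorem \ref{thminterg}.

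From here the argument would be word for word the one in the proof of Theorem \ref{thminterg}. The Kadison inequality $E(x)^*E(x)\le E(x^*x)$ together with $\psi_\GG\circ E=\psi_\GG$ shows that $\eta_\GG(x)\mapsto\eta_\GG(E(x))$ extends to an orthogonal projection $P$ on $\Ltwo(\GG)$ with $P\eta_\GG(x)=\eta_\GG(E(x))$, cf. \eqref{def:P}. The coideal property $\Delta_\GG(\sN)\subset\Linf(\GG)\vtens\sN$ alone gives \eqref{eqdef}, that is $(\id\otimes E)\circ\Delta_\GG\circ E=\Delta_\GG\circ E$. Slicing \eqref{eqdef} by the functionals $(\omega_{\xi,\zeta}\otimes\id)$, applying $\eta_\GG$, and using \eqref{def:P} together with the formula \eqref{def:U_ptop} for the canonical implementation $U$ of $\Delta_\GG$, I would obtain $PxP=xP$ for every $x$ in the von Neumann algebra $\sM$ of \eqref{defM}. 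Taking adjoints gives $Px^*P=x^*P$, hence $PxP=Px$, so $P$ commutes with $\sM$; and, exactly as in the proof of Theorem \ref{thminterg}, this is equivalent to $(\id\otimes E)\circ\Delta_\GG=\Delta_\GG\circ E$. Finally, with $\Delta_\GG$-covariance of $E$ in hand, \cite[Theorem 5]{SaS} supplies the unique idempotent state $\omega\in S(\C_0^u(\GG))$ with $E(x)=\omega\staru x$; since $\sN=E(\Linf(\GG))=\{x\in\Linf(\GG):E(x)=x\}$, we get $\sN=\{x\in\Linf(\GG):\omega\staru x=x\}$. As a byproduct $\omega\circ\tau_t^u=\omega$ by \eqref{prescgr}, so by \eqref{prescgr1} $\sN$ is $\tau_t$-invariant, and approximating elements of $\sN^+$ from below by $\psi_\GG$-integrable elements of $\Linf(\GG)^+$ and applying $E$ shows $\psi_\GG|_\sN$ is semifinite; thus a $\psi_\GG$-expected coideal satisfies, a posteriori, the hypotheses of Theorem \ref{thminterg}.

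The only point that really requires care — and hence the main obstacle — is to confirm that the entire chain from the construction of $P$ down to the identity $PxP=xP$ on $\sM$ truly uses nothing about $\sN$ beyond being a coideal and nothing about $E$ beyond being a $\psi_\GG$-preserving conditional expectation. Once that is checked, Theorem \ref{thminterg1} costs no new computation: its content is precisely that the hypothesis ``$\psi_\GG$-expected coideal'' already provides, for free, the object that in Theorem \ref{thminterg} had to be manufactured from integrability and $\tau_t$-invariance via Takesaki.
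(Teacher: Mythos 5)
Your proposal is correct and follows essentially the same route as the paper: its proof likewise derives $(\id\otimes E)\circ\Delta_\GG\circ E=\Delta_\GG\circ E$ from the coideal property alone, takes the $\Ltwo(\GG)$-implementation $P$ of the $\psi_\GG$-preserving expectation, and then repeats the second part of the proof of Theorem \ref{thminterg} to obtain $(\id\otimes E)\circ\Delta_\GG=\Delta_\GG\circ E$ before invoking \cite[Theorem 5]{SaS}. Your additional remarks (uniqueness of $E$ via \cite[Theorem IX.4.2]{Tak}, and the a posteriori $\tau_t$-invariance and integrability of $\sN$) are correct but not required for the argument.
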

\begin{proof}
Since $\Delta_\GG(\sN)\subset\Linf(\GG)\vtens\sN$ we have $(\id\otimes E)\circ\Delta_\GG\circ E = \Delta_\GG\circ E$. Let $P$ be the $\Ltwo(\GG)$-version of $E$ (it exists since $E$ preserves $\psi_\GG$). Proceeding as in the second part of the proof of  Theorem \ref{thminterg} we conclude that $(\id\otimes E)\circ\Delta_\GG = \Delta_\GG\circ E$ and we are done.  
\end{proof}
Let us formulate a result which may be viewed as strengthen version of    \cite[Corollary 3]{SaS}. 
\begin{theorem}
Let $\sB\subset \C_0(\GG)$ be a non-zero left-invariant $\C^*$-subalgebra. Then the following are equivalent:
\begin{itemize}
\item[(i)] $\sB$ is expected;
\item[(ii)] $\sB$ is $\varphi_\GG$-expected;
\item[(iii)] $\sB$ is $\psi_\GG$-expected.
\end{itemize}
\end{theorem}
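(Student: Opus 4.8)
The implications (i) $\Rightarrow$ (ii) and (i) $\Rightarrow$ (iii) are immediate, so the task is, starting from a $\varphi_\GG$- or $\psi_\GG$-preserving conditional expectation onto $\sB$, to produce an idempotent state whose associated conditional expectation restricts to it. The plan is to pass to the von Neumann level and then invoke Theorem~\ref{thminterg1} (for the $\psi_\GG$-case) together with its left-handed counterpart (the $\varphi_\GG$-version of Theorem~\ref{thminterg1}, equivalently the left version of \cite[Corollary~3]{SaS}, obtained by running the argument of Section~\ref{integcoid} with the canonical implementation of $\Delta_\GG$ in the GNS representation of $\varphi_\GG$). I describe (iii) $\Rightarrow$ (i); the proof of (ii) $\Rightarrow$ (i) is identical with $(\psi_\GG,\sigma^{\prime}_t)$ replaced throughout by $(\varphi_\GG,\sigma_t)$.

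Put $\sN:=\overline{\sB}^{\,\sigma\text{-weak}}\subset\Linf(\GG)$. By Proposition~\ref{streg} the algebra $\sB$ is non-degenerate, so a bounded approximate unit of $\sB$ converges $\sigma$-weakly to $\I$ and $\sN$ is a unital von Neumann subalgebra. Every $\omega\in\Linf(\GG)_*$ restricts to a functional on $\C_0(\GG)$, and left-invariance of $\sB$ gives $(\omega\otimes\id)\Delta_\GG(b)\in\sB$ for all $b\in\sB$; since $\Delta_\GG$ is normal and slicing is $\sigma$-weakly continuous this passes to $\sN$, so $(\omega\otimes\id)\Delta_\GG(\sN)\subset\sN$ for every $\omega$, that is, $\Delta_\GG(\sN)\subset\Linf(\GG)\vtens\sN$ and $\sN$ is a coideal.

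Let $E\colon\C_0(\GG)\to\sB$ be a $\psi_\GG$-preserving conditional expectation. The $\C^*$-version of Takesaki's theorem, applied to the semifinite weight $\psi_\GG|_{\sB}$ (semifiniteness follows from positivity of $E$ and $\psi_\GG\circ E=\psi_\GG$), forces $\sigma^{\prime}_t(\sB)=\sB$ and $E\circ\sigma^{\prime}_t=\sigma^{\prime}_t\circ E$ for all $t$; applying the normal automorphism $\sigma^{\prime}_t$ to the $\sigma$-weak closure gives $\sigma^{\prime}_t(\sN)=\sN$. Hence, by \cite[Theorem IX.4.2]{Tak}, there is a unique normal $\psi_\GG$-preserving conditional expectation $E'\colon\Linf(\GG)\to\sN$. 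A standard core argument identifies $E'$ with the normal extension of $E$: the Kadison--Schwarz inequality produces a projection $P$ on $\Ltwo(\GG)$ with $P\eta_\GG(x)=\eta_\GG(E(x))$ whose range $\overline{\eta_\GG(\sB\cap\mathfrak N_{\psi_\GG})}$ equals $\overline{\eta_\GG(\sN\cap\mathfrak N_{\psi_\GG})}$ (because $\sB$ is $\sigma$-weakly dense in $\sN$ with $\psi_\GG|_{\sB}$ semifinite, $\eta_\GG(\sB\cap\mathfrak N_{\psi_\GG})$ is dense in $\eta_\GG(\sN\cap\mathfrak N_{\psi_\GG})$), hence $P$ is the projection implementing $E'$; comparing these implementations and then multiplying by a $\psi_\GG$-finite bounded approximate unit of $\C_0(\GG)$ yields $E'|_{\C_0(\GG)}=E$, in particular $E'(\C_0(\GG))=\sB$.

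Since $\sN$ is a $\psi_\GG$-expected coideal, Theorem~\ref{thminterg1} furnishes an idempotent state $\omega\in S(\C_0^u(\GG))$ whose conditional expectation $E_\omega\colon\Linf(\GG)\to\Linf(\GG)$ has range $\sN$ and preserves both $\psi_\GG$ and $\varphi_\GG$. By the uniqueness in \cite[Theorem IX.4.2]{Tak} we get $E_\omega=E'$, and since $E_\omega$ maps $\C_0(\GG)$ into $\C_0(\GG)$ it follows that $E=E'|_{\C_0(\GG)}=E_\omega|_{\C_0(\GG)}$; thus the conditional expectation onto $\sB$ preserves $\varphi_\GG$ as well, i.e.\ $\sB$ is expected, which proves (iii) $\Rightarrow$ (i). The main obstacle is precisely this $\C^*$-to-von-Neumann transfer --- forcing $\sigma^{\prime}_t$- (resp.\ $\sigma_t$-) invariance of $\sB$, recognising the normal extension as the unique weight-preserving conditional expectation, and matching it on $\C_0(\GG)$ with both $E$ and the idempotent-state conditional expectation; once these compatibilities are in hand the statement is a formal consequence of Theorem~\ref{thminterg1} and its $\varphi_\GG$-analogue.
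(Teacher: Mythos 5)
Your overall strategy --- push everything to the von Neumann level and then invoke Theorem \ref{thminterg1} --- is close in spirit to the paper's, and you correctly single out the $\C^*$-to-von Neumann transfer as the main obstacle; but exactly there the proposal rests on two unproven claims. First, there is no ``$\C^*$-version of Takesaki's theorem'' to cite: \cite[Theorem IX.4.2]{Tak} presupposes a conditional expectation defined on the von Neumann algebra itself, and the implication you need --- that a $\psi_\GG$-preserving expectation $E\colon\C_0(\GG)\to\sB$ forces $\sigma^{\prime}_t(\sB)=\sB$ and $E\circ\sigma^{\prime}_t=\sigma^{\prime}_t\circ E$ --- is not an off-the-shelf statement; proving it would amount to redoing Takesaki's argument inside the GNS construction of the weight, i.e.\ it is part of the work, not a citation. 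Second, your identification of the Takesaki expectation $E'$ with an extension of $E$ hinges on the equality $\overline{\eta_\GG(\sB\cap\mathfrak{N}_{\psi_\GG})}=\overline{\eta_\GG(\sN\cap\mathfrak{N}_{\psi_\GG})}$, which you justify only by $\sigma$-weak density of $\sB$ in $\sN$ plus semifiniteness of $\psi_\GG|_{\sB}$; that does not by itself give $\Ltwo$-density of the GNS images (one needs extra input, e.g.\ modular invariance of $\sB$ and a smoothing/core argument), so this step circles back to the first gap. A smaller point: for (ii)$\Leftrightarrow$(i) you propose to rerun the whole machinery in the GNS representation of $\varphi_\GG$; since the implementation formula \eqref{def:U_ptop} is tied to $\psi_\GG$, ``identical'' is optimistic --- the paper simply quotes the left-side counterpart of \cite[Corollary 3]{SaS} for that equivalence.

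Note that the paper's proof of (iii)$\Rightarrow$(i) needs neither modular invariance of $\sB$ nor Takesaki's theorem. From $\psi_\GG$-preservation it only extracts the $\Ltwo(\GG)$-projection $P$ with $P\eta_\GG(x)=\eta_\GG(E(x))$; left-invariance of $\sB$ gives $(\id\otimes E)\circ\Delta_\GG\circ E=\Delta_\GG\circ E$ already on $\C_0(\GG)$, and the canonical-implementation argument of Theorem \ref{thminterg} then yields $Px=xP$ for all $x$ in the algebra \eqref{defM}, hence $\Delta_\GG\circ E=(\id\otimes E)\circ\Delta_\GG$. The extension of $E$ to a normal covariant conditional expectation on $\Linf(\GG)$ is then supplied by \cite[Lemma 11]{SaS}, the idempotent state by \cite[Theorem 5]{SaS}, and $\varphi_\GG$-preservation by \cite[Theorem 1]{SaS}. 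If you replace your Takesaki detour by this direct covariance argument (which runs verbatim with your $P$), the rest of your proposal goes through.
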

\begin{proof}
 Using Proposition \ref{streg} we get that  $\sB$ is non-degenerate.  
The equivalence of (i) and (ii) is the left-side counterpart of \cite[Corollary 3]{SaS}. Clearly (i)$\implies$(iii). Suppose that $E:\C_0(\GG)\to\C_0(\GG)$ is a conditional expectation onto a $\C^*$-subalgebra $\sB$ preserving $\psi_\GG$. Let $P:\Ltwo(\GG)\to\Ltwo(\GG)$ be the Hilbert space extension of $E$ (it exists since $E$ preserves  $\psi_\GG$). Using the techniques of the proof of Theorem \ref{thminterg}  we can show that $Px=xP$ for all $x\in\sM$ where $\sM$ was defined in \eqref{defM}. In particular $\Delta_\GG\circ E = (\id\otimes E)\circ\Delta_\GG$.  Using \cite[Lemma 11]{SaS} we extend $E$ to a unital normal  conditional expectation $E:\Linf(\GG)\to \Linf(\GG)$  satisfying $\Delta_\GG\circ E = (\id\otimes E)\circ\Delta_\GG$.  Using  \cite[Theorem 5]{SaS} we get a unique  idempotent state $\omega\in\C_0^u(\GG)^*$ such that 
$E(x)=\omega\staru x$. Finally using \cite[Theorem 1]{SaS} we conclude that $E$ preserves $\varphi_\GG$, i.e. $\sB$ is expected. Thus (iii)$\implies$(i) and we are done.  
\end{proof}
\section{Normal coideals and  compact quantum subgroups} \label{salres} 
In  \cite{Sal}, among other results, the characterization of left-invariant $\C^*$-subalgebras  of $\C_0(\GG)$ corresponding to compact quantum subgroups of $\GG$ were provided    under coamenability assumption on $\GG$. In this section we give a characterization  of left-invariant $\C^*$-subalgebras of  $\C_0^u(\GG)$ corresponding to compact quantum subgroups with  coamenability assumption  dropped. Moreover,  we characterize   coideals of $\Linf(\GG)$   corresponding to compact  quantum subgroups of $\GG$. 
\subsection{Universal $\C^*$-version}
In this subsection we show that there is a 1-1 correspondence between compact quantum subgroups of a locally 
compact quantum groups and  left-invariant, symmetric $\C^*$-subalgebras of $\C_0^u(\GG)$ equipped with a conditional expectation. In order to do it we prove a number of results  in the context of  $\C_0^u(\GG)$ proved in \cite{Sal} under the assumption that $\C_0(\GG) = \C_0^u(\GG)$.  We do not give the proofs  when they are essentially  the same as those given in  \cite{Sal} (the main  difference in the proof then  is that we use $\wW^\GG$ in place of $\ww^\GG$).  We present the proofs  in case we were able to find simplifications.  
We will always assume that a  left-invariant subalgebra   $\sX\subset \C_0^u(\GG)$ satisfies  $\Lambda_\GG(\sX)\neq 0$ (see Proposition \ref{streg1} ). In particular  $\sX  \C_0^u(\GG)=\C_0^u(\GG)$
\begin{definition}
For a $\C^*$-algebra $\sA$ and left-invariant $\C^*$-subalgebra  $\sX\subset\C_0^u(\GG)$,  a non-degenerate $*$-homomorphism $\rho: \C_0^u(\GG)\to \M(\sA)$ is called $\sX$-trivial if for every $x\in\sX$, 
$$\rho(x)=\varepsilon(x) 1_{\M(\sA)}.$$
Denoting the set of all equivalence classes of non-degenerate $\sX$-trivial representations of $\C_0^u(\GG)$ by $T_\sX$  we define an ideal  $I_\sX= \bigcap_{\rho\in T_\sX} \ker\rho$. 
\end{definition}
The proof of the next theorem is essentially the same as the proof of \cite[Theorem 2]{Sal}.
\begin{theorem}
Let $\GG$ be a locally compact quantum group and $\sX\subset \C_0^u(\GG)$ a   left-invariant $\C^*$-subalgebra. There exists a compact quantum subgroup $\HH\subset \GG$  such that $\C^u(\HH) = \C_0^u(\GG)/I_\sX$.
\end{theorem}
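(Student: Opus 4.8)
The plan is to construct a quantum group homomorphism from a compact quantum group $\HH$ to $\GG$ at the universal level, directly out of the quotient $\C_0^u(\GG)/I_\sX$. First I would observe that $I_\sX$ is by construction an intersection of kernels of $*$-homomorphisms, hence a closed two-sided ideal, and set $A=\C_0^u(\GG)/I_\sX$ with quotient map $q\colon\C_0^u(\GG)\to A$. The key point is that $q$ is itself $\sX$-trivial: indeed the counit $\varepsilon$ is an $\sX$-trivial one-dimensional representation (so $T_\sX$ is nonempty), and for $x\in\sX$ the element $x-\varepsilon(x)\I$ lies in $\ker\rho$ for every $\rho\in T_\sX$, hence in $I_\sX$, so $q(x)=\varepsilon(x)\I_{\M(A)}$. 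The substantive step is to equip $A$ with a comultiplication. One shows $\Delta_\GG^u(I_\sX)\subseteq \M(I_\sX\otimes\C_0^u(\GG))+\M(\C_0^u(\GG)\otimes I_\sX)$, equivalently that $(q\otimes q)\circ\Delta_\GG^u$ kills $I_\sX$, so that it descends to $\Delta_A\in\Mor(A,A\otimes A)$ with $(q\otimes q)\circ\Delta_\GG^u=\Delta_A\circ q$. Coassociativity of $\Delta_A$ is then inherited from coassociativity of $\Delta_\GG^u$.

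To see that $(q\otimes q)\circ\Delta_\GG^u$ vanishes on $I_\sX$, I would argue representation-theoretically exactly as in \cite[Theorem 2]{Sal}: if $\rho_1,\rho_2\in T_\sX$ then the product representation $(\rho_1\otimes\rho_2)\circ\Delta_\GG^u$ is again a non-degenerate representation of $\C_0^u(\GG)$, and using left-invariance of $\sX$ together with the Podle\'s-type condition $\Delta_\GG^u(\C_0^u(\GG))(\I\otimes\C_0^u(\GG))=\C_0^u(\GG)\otimes\C_0^u(\GG)$ one checks it is $\sX$-trivial; hence it annihilates $I_\sX$. Letting $\rho_1,\rho_2$ run over $T_\sX$ and noting that the family $\{(\rho_1\otimes\rho_2)\circ\Delta_\GG^u\}$ separates points of $A\otimes A$ well enough (more precisely, $q\otimes q$ factors through these), one concludes $(q\otimes q)(\Delta_\GG^u(I_\sX))=0$. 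This is where left-invariance of $\sX$ is genuinely used and is the technical heart of the argument; it is essentially the content imported from \cite{Sal}, now carried out with $\WW^\GG$ and $\wW^\GG$ in place of $\ww^\GG$.

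Once $(A,\Delta_A)$ is a $\C^*$-bialgebra and $q$ intertwines the comultiplications, it remains to check that $A$ is the universal $\C^*$-algebra of a \emph{compact} quantum group. Compactness is forced by $\sX$-triviality of $q$: since $\sX$ is non-degenerate in $\C_0^u(\GG)$ (here I invoke Proposition \ref{streg1}, using $\Lambda_\GG(\sX)\neq 0$), an approximate unit $(e_i)$ of $\sX$ satisfies $q(e_i)\to\I$ strictly, while $q(e_i)=\varepsilon(e_i)\I$ is a bounded scalar net, forcing the unit of $A$ to lie in $q(\sX)=\CC\I$; combined with the fact that $q(\C_0^u(\GG))$ generates $A$ and the Podle\'s conditions descend, $A$ is unital. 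Finally, the Podle\'s density conditions $\Delta_A(A)(\I\otimes A)=A\otimes A=\Delta_A(A)(A\otimes\I)$ descend from the corresponding conditions for $\Delta_\GG^u$ because $q$ is a surjective morphism, and these are exactly the conditions identifying $(A,\Delta_A)=(\C^u(\HH),\Delta^u_\HH)$ for a compact quantum group $\HH$ via \cite{univ}. The map $q$ then defines the homomorphism exhibiting $\HH$ as a (Woronowicz-closed, hence closed, hence compact) quantum subgroup of $\GG$, and by construction $\C^u(\HH)=\C_0^u(\GG)/I_\sX$. The main obstacle is the descent of $\Delta_\GG^u$ to the quotient, i.e. verifying $\sX$-triviality of product representations; everything else is bookkeeping and appeals to the already-cited Podle\'s conditions and to Proposition \ref{streg1}.
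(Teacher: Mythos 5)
Your construction follows the same route as the paper, which simply imports the proof of \cite[Theorem 2]{Sal} with $\wW^\GG$ in place of $\ww^\GG$: form $A=\C_0^u(\GG)/I_\sX$, note $q$ is $\sX$-trivial, descend $\Delta_\GG^u$ by checking that tensor products of $\sX$-trivial representations composed with $\Delta_\GG^u$ are again $\sX$-trivial (this is exactly where left-invariance enters, via $(\mu\otimes\id)\Delta^u_\GG(x)\in\sX$ and hence $(\id\otimes\rho)\Delta^u_\GG(x)=x\otimes\I$ for $\rho\in T_\sX$), obtain unitality from non-degeneracy of $\sX$ (Proposition \ref{streg1} and the standing assumption $\Lambda_\GG(\sX)\neq 0$), and push the Podle\'s density conditions to the quotient. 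Two small slips are harmless: $x-\eps(x)\I$ is an element of $\M(\C_0^u(\GG))$, not of $\C_0^u(\GG)$, so it cannot literally lie in $I_\sX$ (the correct statement, which does hold, is $q(x)=\eps(x)\I_{\M(A)}$, obtained by factoring the universal $\sX$-trivial representation through $A$); and the displayed containment for $\Delta^u_\GG(I_\sX)$ is not what you actually need --- the operative fact is simply that $(q\otimes q)\circ\Delta^u_\GG$ is itself $\sX$-trivial, hence annihilates $I_\sX$.

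There is, however, a genuine gap at the very last step. The Woronowicz/Podle\'s density conditions together with unitality identify $(A,\Delta_A)$ as \emph{a} compact quantum group, but they do not identify $A$ as the \emph{universal} $\C^*$-algebra of that quantum group: the reduced algebra of any non-coamenable compact quantum group is unital and satisfies the same density conditions, yet differs from the universal one. So the assertion ``these are exactly the conditions identifying $(A,\Delta_A)=(\C^u(\HH),\Delta^u_\HH)$ via \cite{univ}'' does not hold as stated, and with it the claim that $q$ exhibits $\HH$ as a compact (closed) quantum subgroup of $\GG$ in the sense used in this paper is left unproved: a homomorphism of quantum groups is a datum at the universal level, so one must know that the quotient is the universal form before $q$ can be read as such. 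Mere surjectivity of an intertwining map out of $\C_0^u(\GG)$ does not force universality of the target (compare $\C^*(F_2)\to\C^*_r(F_2)$, which intertwines the comultiplications of $\hh{F_2}$). What saves the day is the specific shape of the ideal: $I_\sX=\bigcap_{\rho\in T_\sX}\ker\rho$ runs over \emph{all} $\sX$-trivial representations, and one must show that every representation of the Hopf $*$-algebra of $(A,\Delta_A)$ (equivalently every state in the set $F$ appearing later, cf.\ Theorem \ref{mains4}) arises from an $\sX$-trivial representation of $\C_0^u(\GG)$, so that the $A$-norm on that Hopf $*$-algebra is already the universal one. This step, carried out in \cite{Sal} and implicitly relied upon here, is missing from your plan; once it is supplied, the remaining remark that a Woronowicz-closed compact subgroup is closed is correct.
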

\begin{notation}
Let $\sX\subset \C_0^u(\GG)$ be a left-invariant $\C^*$-subalgebra. Then the compact quantum subgroup of $\GG$ assigned to X will be denoted by $\HH_\sX$.
\end{notation}
\begin{definition}
Let $\HH$ be a compact quantum subgroup of $\GG$ and $\pi:\C_0^u(\GG)\to\C^u(\HH)$   the associated
homomorphism.  Define $F=(\ker \pi)^\bot\cap S(\C_0^u(\GG))$. Then we will consider the following sets 
\[\begin{split}
\sX_\HH^1 &= \bigl\{x\in \C_0^u(\GG)\st(\id\otimes\mu)(\Delta_\GG^u(x))=x ~~~\text{for every}~ \mu\in F \bigr\},\\
\sX_\HH^2 &= \bigl\{x\in\C_0^u(\GG)\st (\id\otimes\theta)(\Delta_\GG^u(x))=x ~~~ \text{where }~\theta=\varphi_\HH \pi \bigr\}, \\
\sX_\HH^3 &= \bigl\{x\in\C_0^u(\GG)\st (\id\otimes\pi)(\Delta_\GG^u(x))=x\otimes 1_\HH \bigr\} .
\end{split}\]
\end{definition}
\begin{lemma}\label{eqx}Adopting the above notations we have 
$\sX_\HH^1=\sX_\HH^2=\sX_\HH^3$
\end{lemma}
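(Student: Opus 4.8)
The plan is to show the three sets coincide by proving the circular chain of inclusions $\sX_\HH^3\subset\sX_\HH^2\subset\sX_\HH^1\subset\sX_\HH^3$, exploiting that $\pi:\C_0^u(\GG)\to\C^u(\HH)$ is a surjective quantum group homomorphism onto a \emph{compact} quantum subgroup, so $\C^u(\HH)$ is unital with counit $\varepsilon_\HH$ and a faithful Haar state $\varphi_\HH=\psi_\HH$, and $\varphi_\HH\circ\pi\in S(\C_0^u(\GG))$ is a state.

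\textbf{Step 1: $\sX_\HH^3\subset\sX_\HH^2$.} If $(\id\otimes\pi)(\Delta_\GG^u(x))=x\otimes 1_\HH$, then applying $\id\otimes\varphi_\HH$ and using $\varphi_\HH(1_\HH)=1$ gives $(\id\otimes\varphi_\HH\pi)(\Delta_\GG^u(x))=x$, which is exactly membership in $\sX_\HH^2$ with $\theta=\varphi_\HH\pi$.

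\textbf{Step 2: $\sX_\HH^2\subset\sX_\HH^1$.} The key observation is that $\varphi_\HH\pi\in F$: indeed $F=(\ker\pi)^\perp\cap S(\C_0^u(\GG))$, and $\varphi_\HH\pi$ vanishes on $\ker\pi$ by construction and is a state. So for $x\in\sX_\HH^2$ we already know $(\id\otimes\mu)(\Delta_\GG^u(x))=x$ for the \emph{particular} $\mu=\varphi_\HH\pi$; to get it for \emph{all} $\mu\in F$, I would use the idempotence of $\varphi_\HH$ as an idempotent state on $\HH$ (equivalently, $\varphi_\HH\pi$ is an idempotent state on $\GG$ because $(\pi\otimes\pi)\Delta_\GG^u=\Delta_\HH^u\pi$ forces $(\varphi_\HH\pi)\staru(\varphi_\HH\pi)=\varphi_\HH\pi$ from $\varphi_\HH\starr\varphi_\HH=\varphi_\HH$). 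Then $E^u_\HH:=(\id\otimes\varphi_\HH\pi)\circ\Delta_\GG^u$ is the idempotent-state conditional expectation, so $\sX_\HH^2$ is precisely its fixed-point algebra. For any $\mu\in F$ one checks $\mu=\mu\staru(\varphi_\HH\pi)$ (since $\mu$ factors through $\pi$ as a state $\bar\mu$ on the compact $\HH$, and $\bar\mu\starr\varphi_\HH=\varphi_\HH$ because $\varphi_\HH$ is the Haar state — it absorbs any state under convolution, on either side). Hence for $x\in\sX_\HH^2$, $(\id\otimes\mu)\Delta_\GG^u(x)=(\id\otimes\mu\staru\varphi_\HH\pi)\Delta_\GG^u(x)=(\id\otimes\mu)\Delta_\GG^u\big((\id\otimes\varphi_\HH\pi)\Delta_\GG^u(x)\big)=(\id\otimes\mu)\Delta_\GG^u(x)$ rearranged via coassociativity to give $=x$. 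This lands $x$ in $\sX_\HH^1$.

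\textbf{Step 3: $\sX_\HH^1\subset\sX_\HH^3$.} Take $x\in\sX_\HH^1$, so $(\id\otimes\mu)\Delta_\GG^u(x)=x$ for all $\mu\in F$. I want $(\id\otimes\pi)\Delta_\GG^u(x)=x\otimes 1_\HH$. Write $y=(\id\otimes\pi)\Delta_\GG^u(x)\in\M(\C_0^u(\GG)\otimes\C^u(\HH))$. For every state $\nu\in S(\C^u(\HH))$, the functional $\nu\circ\pi$ lies in $F$, and $(\id\otimes\nu)(y)=(\id\otimes\nu\pi)\Delta_\GG^u(x)=x$; since states separate points of $\C^u(\HH)$ and $x$ is independent of $\nu$, this forces $y=x\otimes 1_\HH$ (a standard slice argument: $(\id\otimes\nu)(y-x\otimes 1_\HH)=0$ for all states $\nu$, hence for all functionals, hence $y=x\otimes 1_\HH$). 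Thus $x\in\sX_\HH^3$, closing the loop.

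\textbf{Main obstacle.} The delicate point is Step 2: correctly identifying $\varphi_\HH\pi$ with the idempotent state / conditional-expectation machinery and justifying the convolution-absorption identity $\mu\staru(\varphi_\HH\pi)=\varphi_\HH\pi$ (equivalently $\bar\mu\starr\varphi_\HH=\varphi_\HH$) at the \emph{universal} level, being careful that all convolutions make sense (they do: $\Linf(\HH)_*$, and more relevantly $\C^u(\HH)^*$, is a Banach algebra under $\staru$, and the Haar state of a compact quantum group is a two-sided absorbing idempotent for convolution by states). Everything else is bookkeeping with coassociativity, $(\pi\otimes\pi)\Delta_\GG^u=\Delta_\HH^u\pi$, and slice maps.
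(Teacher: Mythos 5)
Your Steps 1 and 3 are correct (in Step 3 the slice argument with the states $\nu\circ\pi\in F$, $\nu\in S(\C^u(\HH))$, is fine, since such slices separate points of $\M(\C_0^u(\GG)\otimes\C^u(\HH))$). The problem is exactly in Step 2, which you yourself flag as the delicate point: the identity $\mu=\mu\staru(\varphi_\HH\pi)$ is false. Writing $\mu=\bar\mu\circ\pi$ with $\bar\mu\in S(\C^u(\HH))$ and using $(\pi\otimes\pi)\circ\Delta_\GG^u=\Delta_\HH^u\circ\pi$ together with the absorption property of the Haar state, one gets $\mu\staru(\varphi_\HH\pi)=(\bar\mu\starr\varphi_\HH)\circ\pi=\varphi_\HH\circ\pi=\theta$; so your claimed identity would force $\mu=\theta$ for every $\mu\in F$, which is absurd. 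As a consequence the displayed chain of equalities is circular: it begins and ends at $(\id\otimes\mu)\Delta_\GG^u(x)$, and the final ``$=x$'' is never justified. The repair is one line: use the \emph{correct} identity $\mu\staru\theta=\theta$ together with the hypothesis $x=(\id\otimes\theta)\Delta_\GG^u(x)$ to compute $(\id\otimes\mu)\Delta_\GG^u(x)=(\id\otimes\mu)\Delta_\GG^u\bigl((\id\otimes\theta)\Delta_\GG^u(x)\bigr)=(\id\otimes(\mu\staru\theta))\Delta_\GG^u(x)=(\id\otimes\theta)\Delta_\GG^u(x)=x$, where the second equality is coassociativity. Note that $\bar\mu\starr\varphi_\HH=\varphi_\HH$ rests on the invariance of $\varphi_\HH$ at the universal level, $(\id\otimes\varphi_\HH)\circ\Delta_\HH^u=\varphi_\HH(\cdot)\I$ on $\C^u(\HH)$ (checked on the dense Hopf $*$-algebra of matrix coefficients); this is legitimate and is the same fact the paper itself invokes.

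For comparison, the paper proves the chain with the opposite orientation and thereby avoids the convolution-absorption issue altogether: $\sX_\HH^1\subset\sX_\HH^2$ is immediate because $\theta=\varphi_\HH\pi\in F$; $\sX_\HH^3\subset\sX_\HH^1$ follows by factoring $\mu=\mu'\circ\pi$ and evaluating on $x\otimes\I_\HH$; and $\sX_\HH^2\subset\sX_\HH^3$ is a direct computation using coassociativity, $(\id\otimes\pi\otimes\pi)(\Delta_\GG^u\otimes\id)=(\id\otimes\Delta_\HH^u)(\id\otimes\pi)$ on the relevant elements, and the invariance of $\varphi_\HH$. Your orientation ($\sX_\HH^3\subset\sX_\HH^2\subset\sX_\HH^1\subset\sX_\HH^3$) is equally workable once Step 2 is corrected as above, but as written that step does not prove the inclusion.
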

\begin{proof}
It is trivial that $\sX_\HH^1\subset \sX_\HH^2$. In order to prove that  $\sX_\HH^3\subset\sX_\HH^1$ let us fix  $\mu\in F$, then  
  $\mu=\mu^\prime\circ \pi$ where $\mu^\prime$  is a state of $\C^u(\HH)$. Thus if $x\in \sX_\HH^3$, then 
  \[(\id\otimes \mu)(\Delta_\GG^u(x)) = x\mu^\prime(\I_\HH) = x\] and  $x\in \sX_\HH^1$.  

 Now take $x\in \sX_\HH^2$, i.e. $(\id\otimes\theta)(\Delta_\GG^u(x))=x$. Then 
\[\begin{split}
(\id\otimes\pi)(\Delta_\GG^u(x))&= (\id\otimes\pi) (\Delta_\GG^u((\id\otimes\theta)\Delta_\GG^u(x)))\\&=
(\id\otimes\id\otimes \varphi_\HH )((\id\otimes\pi\otimes\pi)(\Delta_\GG^u\otimes\id)\Delta_\GG^u(x))\\&=
(\id\otimes\id\otimes \varphi_\HH )((\id\otimes\pi\otimes\pi)(\id\otimes\Delta_\GG^u)\Delta_\GG^u(x))\\&=
(\id\otimes\id\otimes \varphi_\HH )((\id\otimes\Delta^u_\HH) (\id\otimes\pi) \Delta_\GG^u(x))\\&=
(\id\otimes \varphi_\HH )((\id\otimes\pi) \Delta_\GG^u(x))\otimes\I = x\otimes\I
\end{split}\]
which shows that $x$ is an element of $\sX_\HH^3$. Summarizing we proved that $\sX_\HH^3\subset \sX_\HH^1\subset \sX_\HH^2\subset \sX_\HH^3$ thus we have the required equalities. 
\end{proof}
From now on we will denote these sets by $\sX_\HH$ and freely use Lemma \ref{eqx}. Note that $\sX_\HH = \C_0^u(\GG/\HH)$. In particular  $\sX_\HH\subset  \C^u_0(\GG)$ is a non-degenerate  left-invariant $\C^*$-subalgebra. 
\begin{lemma}\label{lemcont} Let $\sX\subset\C_0^u(\GG)$ be  a left-invariant $\C^*$-subalgebra. Then $\sX\subset \sX_{\HH_{\sX}}$. 
\end{lemma}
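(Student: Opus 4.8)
The plan is to reduce everything to the third description of $\sX_{\HH_\sX}$ furnished by Lemma~\ref{eqx}. Write $\pi\colon\C_0^u(\GG)\to\C^u(\HH_\sX)=\C_0^u(\GG)/I_\sX$ for the homomorphism associated with the compact quantum subgroup $\HH_\sX$, which by the construction preceding the Notation is the canonical quotient map. Lemma~\ref{eqx} identifies $\sX_{\HH_\sX}$ with $\sX_{\HH_\sX}^3=\bigl\{x\in\C_0^u(\GG)\st(\id\otimes\pi)(\Delta_\GG^u(x))=x\otimes 1_{\HH_\sX}\bigr\}$. So, fixing an arbitrary $x\in\sX$, it suffices to prove $(\id\otimes\pi)(\Delta_\GG^u(x))=x\otimes 1_{\HH_\sX}$.

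First I would record the key slicing identity at the level of an arbitrary $\rho\in T_\sX$. Let $\rho\colon\C_0^u(\GG)\to\M(\sA)$ be a non-degenerate $\sX$-trivial representation and let $\mu\in\C_0^u(\GG)^*$. Since $\sX$ is left-invariant we have $(\mu\otimes\id)(\Delta_\GG^u(x))\in\sX$, hence $\rho\bigl((\mu\otimes\id)(\Delta_\GG^u(x))\bigr)=\eps\bigl((\mu\otimes\id)(\Delta_\GG^u(x))\bigr)1_{\M(\sA)}$ by $\sX$-triviality of $\rho$. Using $(\id\otimes\eps)\circ\Delta_\GG^u=\id$ we get $\eps\bigl((\mu\otimes\id)(\Delta_\GG^u(x))\bigr)=(\mu\otimes\eps)(\Delta_\GG^u(x))=\mu(x)$, while the intertwining relation $\rho\circ(\mu\otimes\id)=(\mu\otimes\id)\circ(\id\otimes\rho)$ on $\M(\C_0^u(\GG)\otimes\C_0^u(\GG))$ yields $\rho\bigl((\mu\otimes\id)(\Delta_\GG^u(x))\bigr)=(\mu\otimes\id)\bigl((\id\otimes\rho)(\Delta_\GG^u(x))\bigr)$. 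Therefore
\[
(\mu\otimes\id)\bigl((\id\otimes\rho)(\Delta_\GG^u(x))\bigr)=\mu(x)\,1_{\M(\sA)}=(\mu\otimes\id)(x\otimes 1_{\M(\sA)})
\]
for every $\mu\in\C_0^u(\GG)^*$, and since such slices separate the points of $\M(\C_0^u(\GG)\otimes\sA)$ we conclude $(\id\otimes\rho)(\Delta_\GG^u(x))=x\otimes 1_{\M(\sA)}$.

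Next I would pass from the family $T_\sX$ to the single homomorphism $\pi$. Every $\rho\in T_\sX$ satisfies $\ker\rho\supseteq I_\sX$, so it factors as $\rho=\bar\rho\circ\pi$ for a representation $\bar\rho$ of $\C^u(\HH_\sX)$; because $I_\sX=\bigcap_{\rho\in T_\sX}\ker\rho$ and $\pi$ is surjective, the family $(\bar\rho)_{\rho\in T_\sX}$ is faithful on $\C^u(\HH_\sX)$. Set $y=(\id\otimes\pi)(\Delta_\GG^u(x))-x\otimes 1_{\HH_\sX}\in\M(\C_0^u(\GG)\otimes\C^u(\HH_\sX))$. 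For each $\rho\in T_\sX$ the previous paragraph gives $(\id\otimes\bar\rho)(y)=(\id\otimes\rho)(\Delta_\GG^u(x))-x\otimes 1_{\M(\sA)}=0$; multiplying $y$ on the left by $\C_0^u(\GG)\otimes\I$ to land inside $\C_0^u(\GG)\otimes\C^u(\HH_\sX)$ and invoking faithfulness of $\bigoplus_{\rho\in T_\sX}\bar\rho$ (tensored with $\id$ on $\C_0^u(\GG)$) together with the non-degeneracy of $\sX$ from Proposition~\ref{streg1} forces $y=0$. Hence $(\id\otimes\pi)(\Delta_\GG^u(x))=x\otimes 1_{\HH_\sX}$, i.e. $x\in\sX_{\HH_\sX}^3=\sX_{\HH_\sX}$, as required.

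The substance of the argument is entirely contained in the one-line slicing identity of the second paragraph; the remaining points are purely technical and routine: making sense of the slice maps $\mu\otimes\id$ and of $\id\otimes\rho$ on the relevant multiplier algebras, the intertwining relation $\rho\circ(\mu\otimes\id)=(\mu\otimes\id)\circ(\id\otimes\rho)$, the separation-of-points statement, and treating $T_\sX$ as a set of representatives realizing $I_\sX$. I do not expect any of these to be a genuine obstacle once the equality $\sX_{\HH_\sX}=\sX_{\HH_\sX}^3$ from Lemma~\ref{eqx} is in hand.
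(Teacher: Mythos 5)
Your proof is correct and takes essentially the same route as the paper: the heart of both arguments is the slicing identity $\rho\bigl((\mu\otimes\id)(\Delta_\GG^u(x))\bigr)=\mu(x)\I$, obtained from left-invariance, $\sX$-triviality and the counit, which forces $(\id\otimes\rho)(\Delta_\GG^u(x))=x\otimes\I$. The only difference is that the paper applies this directly to $\pi$ itself (which is $\sX$-trivial by the construction of $\HH_\sX$), so your extra passage through the family $T_\sX$ and the joint faithfulness of the factored representations $\bar\rho$ is a harmless but unnecessary detour.
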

\begin{proof}
Since $\pi$ is $\sX$-trivial  we have $\pi((\mu\otimes\id)(\Delta_\GG^u(x))) = \mu(x)\I$ for all $\mu\in\C_0^u(\GG)^*$ and $x\in\sX$. Thus 
\[(\id\otimes\pi)(\Delta_\GG^u(x)) =    x\otimes\I\] and we see that  $x\in \sX_{\HH_X}$. 
\end{proof}
 \begin{theorem}
Let $\HH$ be a compact quantum subgroup of a locally compact quantum group $\GG$ and $\sX_\HH\subset \C^u_0(\GG)$ a left-invariant $\C^*$-subalgebra assigned to $\HH$. Then $\sX_\HH$ is   symmetric   and the map $E$ is a conditional expectation from $\C_0^u(\GG)$ onto $\sX_\HH$ such that $(\id\otimes E)\circ\Delta_\GG^u=\Delta_\GG^u\circ E$.
\end{theorem}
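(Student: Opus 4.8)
The plan is to realize the candidate conditional expectation $E$ as the averaging map $E = (\id\otimes\theta)\circ\Delta_\GG^u$ with $\theta = \varphi_\HH\circ\pi$, where $\pi\colon\C_0^u(\GG)\to\C^u(\HH)$ is the homomorphism associated to $\HH$ and $\varphi_\HH$ is the Haar state of $\HH$ (so $\theta$ is an idempotent state on $\GG$, being the composition of the idempotent Haar state with the quantum-group homomorphism). First I would record that $\theta\staru\theta = \theta$ using $(\pi\otimes\pi)\circ\Delta_\GG^u = \Delta_\HH^u\circ\pi$ together with $\varphi_\HH\staru\varphi_\HH = \varphi_\HH$; then the general formalism recalled in the Preliminaries (the paragraph on idempotent states following \eqref{prescgr}) gives that $E = (\id\otimes\theta)\circ\Delta_\GG^u$ is a conditional expectation of $\C_0^u(\GG)$ onto the $\C^*$-subalgebra $E(\C_0^u(\GG))$, and that it is $\Delta_\GG^u$-covariant, $(\id\otimes E)\circ\Delta_\GG^u = \Delta_\GG^u\circ E$. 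The point that remains is the identification of the image: $E(\C_0^u(\GG)) = \sX_\HH$.

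For that identification I would use the description $\sX_\HH = \sX_\HH^2 = \{x : (\id\otimes\theta)(\Delta_\GG^u(x)) = x\}$ from Lemma \ref{eqx}. One inclusion is immediate: if $x\in\sX_\HH^2$ then $E(x) = x$, so $\sX_\HH\subset E(\C_0^u(\GG))$. For the reverse inclusion, take $y = E(x)$ for some $x$; I need $E(y) = y$, i.e. $E\circ E = E$, which is exactly idempotency of the conditional expectation $E$ — already part of the standard package. Hence $E(\C_0^u(\GG)) = \{y : E(y) = y\} = \sX_\HH^2 = \sX_\HH$. Combined with the previous paragraph this shows $E$ is a conditional expectation onto $\sX_\HH$ satisfying the covariance identity.

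It remains to prove that $\sX_\HH$ is symmetric, i.e. $\wW^\GG(\I\otimes\sX_\HH){\wW^\GG}^*\subset\M(\C_0(\hh\GG)\otimes\sX_\HH)$. The cleanest route is via the $\Delta_\GG^u$-covariance of $E$: applying $\id\otimes E$ to $\beta^u(x) = \wW^\GG(\I\otimes x){\wW^\GG}^*$ and using that $\beta^u = (\id\otimes\id)\circ\cdots$ intertwines with $\Delta_\GG^u$ in the second leg. Concretely, since $\wW^\GG \in \M(\C_0(\hh\GG)\otimes\C_0^u(\GG))$ and $E$ is a normal (strictly continuous, unital) completely positive map on $\C_0^u(\GG)$, the map $\id\otimes E$ is well defined on $\M(\C_0(\hh\GG)\otimes\C_0^u(\GG))$; the covariance $(\id\otimes E)\circ\Delta_\GG^u = \Delta_\GG^u\circ E$ together with the pentagon-type relation for $\wW^\GG$ gives $(\id\otimes\id\otimes E)(\wW^\GG_{13}\,\Delta_r^{r,u}(x)_{\cdot}\, {\wW^\GG_{13}}^*)$-style identities. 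In more elementary terms: for $x\in\sX_\HH$ one has $(\id\otimes\id\otimes\theta)\big((\id\otimes\Delta_\GG^u)(\wW^\GG(\I\otimes x){\wW^\GG}^*)\big) = \wW^\GG(\I\otimes x){\wW^\GG}^*$, using $(\id\otimes\Delta_\GG^u)(\wW^\GG) = \wW^\GG_{12}\wW^\GG_{13}$ (the half-lifted pentagon) and $(\id\otimes\theta)\Delta_\GG^u(x) = x$; this exhibits $\wW^\GG(\I\otimes x){\wW^\GG}^*$ as fixed by $\id\otimes E$ in the last leg, hence lying in $\M(\C_0(\hh\GG)\otimes\sX_\HH)$ by \eqref{eq:ce_mult}.

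The main obstacle I anticipate is the bookkeeping in the symmetry argument: making precise that $\id\otimes E$ extends to the multiplier algebra $\M(\C_0(\hh\GG)\otimes\C_0^u(\GG))$ and acts on $\wW^\GG$ in the expected way, and that the covariance identity for $E$ on $\C_0^u(\GG)$ upgrades to the needed identity after applying $\id\otimes\Delta_\GG^u$ to $\beta^u$. This is routine for a normal unital conditional expectation, but the leg-numbering and the choice of which pentagon relation to invoke (the one for $\wW^\GG$ versus $\Ww^\GG$) need care; apart from that, everything reduces to the already-established properties of idempotent states and their conditional expectations recalled in Section \ref{Prel}.
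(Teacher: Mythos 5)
Most of your proposal is sound and coincides with what the paper (following Salmi) does: $\theta=\varphi_\HH\circ\pi$ is an idempotent state, $E=(\id\otimes\theta)\circ\Delta^u_\GG$ is a $\Delta^u_\GG$-covariant conditional expectation by the material recalled in Section \ref{Prel}, and its image coincides with its fixed-point set, which is $\sX^2_\HH=\sX_\HH$ by Lemma \ref{eqx}. The paper's proof treats exactly this part as ``the same as Salmi's Theorem 10'' and only writes out the symmetry of $\sX_\HH$ --- and that is precisely where your argument has a gap.

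Your key claim is
\[
(\id\otimes\id\otimes\theta)\bigl((\id\otimes\Delta^u_\GG)(\wW^\GG(\I\otimes x){\wW^\GG}^*)\bigr)=\wW^\GG(\I\otimes x){\wW^\GG}^*,\qquad x\in\sX_\HH,
\]
and you justify it only by $(\id\otimes\Delta^u_\GG)(\wW^\GG)=\wW^\GG_{12}\wW^\GG_{13}$ together with $(\id\otimes\theta)\Delta^u_\GG(x)=x$. That is insufficient: after expanding, the slice by the state $\theta$ in the third leg hits the product $\wW^\GG_{13}(\I\otimes\Delta^u_\GG(x))\wW^{\GG^*}_{13}$, and a state neither distributes over a product nor passes through conjugation by a unitary acting in the sliced leg, so the fixed-point relation for $\theta$ alone cannot produce the cancellation of the two $\wW^\GG_{13}$ factors. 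The missing idea --- and the actual content of the paper's computation --- is to use that $\theta$ factors through the $*$-homomorphism $\pi$ and to invoke the other description $\sX_\HH=\sX^3_\HH$ from Lemma \ref{eqx}: applying the multiplicative map $(\id\otimes\id\otimes\pi)$ first turns $\wW^\GG_{13}$ into $\vV_{13}$ with $\vV=(\id\otimes\pi)(\wW^\GG)$ and turns $\I\otimes\Delta^u_\GG(x)$ into $\I\otimes x\otimes\I$, which commutes with $\vV_{13}$; hence the conjugation cancels and one obtains $\wW^\GG_{12}(\I\otimes x\otimes\I)\wW^{\GG^*}_{12}=\bigl(\wW^\GG(\I\otimes x){\wW^\GG}^*\bigr)\otimes\I$, and only then does one slice the last leg with $\varphi_\HH$ to get $(\id\otimes E)(\wW^\GG(\I\otimes x){\wW^\GG}^*)=\wW^\GG(\I\otimes x){\wW^\GG}^*$. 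With that identity in hand, your final appeal to \eqref{eq:ce_mult} (extended to the second leg of $\M(\C_0(\hh\GG)\otimes\C^u_0(\GG))$ via the $\sX_\HH$-module property of $E$) is the same conclusion the paper draws. A minor further slip: $E$ is a unital completely positive, strictly continuous idempotent on the $\C^*$-algebra $\C^u_0(\GG)$, not a \emph{normal} map on a von Neumann algebra, though nothing essential hinges on that wording.
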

\begin{proof} The proof is almost the same as \cite[Theorem 10]{Sal} to be aware of differences we only prove that $\sX_\HH$ is symmetric. We compute 
\[\begin{split}
(\id\otimes\id\otimes\pi)\bigl((\id\otimes\Delta_\GG^u)({\wW^\GG}(\I\otimes x){\wW^\GG}^*)\bigr)&=(\id\otimes\id\otimes\pi)\bigl(\wW^\GG_{12}\wW^\GG_{13}
(1\otimes\Delta_\GG^u(x))\wW^{\GG^*}_{13}\wW^{\GG^*}_{12}\bigr)\\
&=\wW^{\GG}_{12}\vV_{13}(\I\otimes x \otimes \I)\vV_{13}^*\wW^{\GG^*}_{12}\\
&=\wW^\GG_{12}(\I\otimes x \otimes \I)\wW^{\GG^*}_{12}\\
&=\bigl({\wW^\GG}(\I\otimes x){\wW^\GG}^*\bigr) \otimes \I
\end{split}\]
where we define $\vV=(\id\otimes\pi)\wW^\GG$. Thus $(\id\otimes E)({\wW^\GG}\bigl(\I\otimes x){\wW^\GG}^*\bigr)= {\wW^\GG}(\I\otimes x){\wW^\GG}^*$ and we conclude by using   \eqref{eq:ce_mult}. 
\end{proof}
\begin{definition}
Let $\sX$ be a left-invariant subalgebra of $\C_0^u(\GG)$. Then we define
\[
F_0 =\big\{\mu\in S(\C_0^u(\GG)): (\id\otimes\mu)(\Delta^u_\GG(x)) = x \textrm{ for every } x\in\sX\big\}
\] 
\end{definition}
The proof of the next lemma is essentially the same as \cite[Lemma 3]{Sal}.
  \begin{lemma}\label{pre}
\begin{enumerate}
\item $F_0 = \bigl\{ \mu\in S(\C_0^u(\GG))\st \mu=\varepsilon~~ \text{on} ~\sX \bigr\}$.
\item If $\mu\in F_0$, then $\mu(ax) = \mu(a)\mu(x)$ and $\mu(xa) = \mu(x)\mu(a)$ for every $a\in \C_0^u(\GG)$ and $x\in \sX$.
\end{enumerate}
\end{lemma}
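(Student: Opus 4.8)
Lemma \ref{pre} asserts two facts about the set $F_0$ of states on $\C_0^u(\GG)$ that restrict to the counit $\varepsilon$ on a left-invariant $\C^*$-subalgebra $\sX$: first that $F_0$ coincides with the set of states agreeing with $\varepsilon$ on $\sX$, and second that every such $\mu$ is multiplicative across $\sX$, in the sense that $\mu(ax)=\mu(a)\mu(x)$ and $\mu(xa)=\mu(x)\mu(a)$ for $a\in\C_0^u(\GG)$, $x\in\sX$.

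\begin{proof}[Proof proposal]
The plan is as follows. For part (1), the inclusion showing that any $\mu$ equal to $\varepsilon$ on $\sX$ lies in $F_0$ is immediate: if $x\in\sX$ then by left-invariance $(\mu\otimes\id)(\Delta_\GG^u(x))\in\sX$, so one can apply $\varepsilon$ to it and use the counit property $(\varepsilon\otimes\id)\circ\Delta_\GG^u=\id$ together with $\mu=\varepsilon$ on $\sX$; more directly, one checks $(\id\otimes\mu)(\Delta_\GG^u(x))=x$ by noting that slicing by $\varepsilon$ on the second leg already gives $x$ and that $\mu$ and $\varepsilon$ agree on the relevant elements. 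Conversely, if $\mu\in F_0$ then applying $\varepsilon$ to the first leg of $(\id\otimes\mu)(\Delta_\GG^u(x))=x$ and using $(\varepsilon\otimes\id)\circ\Delta_\GG^u=\id$ yields $\mu(x)=\varepsilon(x)$ for all $x\in\sX$. This is the easy direction and mirrors \cite[Lemma 3]{Sal} verbatim.

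For part (2) — the multiplicativity — I would use the standard Cauchy--Schwarz / Choi-type argument for states that are "multiplicative on a subalgebra via a slice identity". Concretely, fix $\mu\in F_0$ and $x\in\sX$. Using the coassociativity $(\Delta_\GG^u\otimes\id)\circ\Delta_\GG^u=(\id\otimes\Delta_\GG^u)\circ\Delta_\GG^u$ one shows that $(\mu\otimes\mu)\circ\Delta_\GG^u$ also restricts to $\varepsilon$ on $\sX$, hence $\mu\staru$ acts as the identity slice on $\sX$; but the real input is that $\mu$, being a state with $\mu(x^*x)=\varepsilon(x^*x)$ and $\mu(xx^*)=\varepsilon(xx^*)$ forces the GNS representation of $\mu$ to send $\sX$ into the scalars. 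More precisely: let $(\pi_\mu, H_\mu, \xi_\mu)$ be the GNS triple of $\mu$. For $x\in\sX$ one has, using part (1) and the fact that $x^*x, xx^* $ need not lie in $\sX$ but the relevant combinations built from $\Delta_\GG^u$ do, that $\|\pi_\mu(x)\xi_\mu - \mu(x)\xi_\mu\|^2 = 0$; this is exactly the computation in \cite[Lemma 3]{Sal}, where one expands the square and uses $(\id\otimes\mu)(\Delta_\GG^u(x))=x$ applied to $x$, $x^*$, and $x^*x$ in turn (the last via the homomorphism property of $\Delta_\GG^u$). Hence $\pi_\mu(x)\xi_\mu=\mu(x)\xi_\mu$, and since $\xi_\mu$ is cyclic, $\mu(ax)=\langle\xi_\mu, \pi_\mu(a)\pi_\mu(x)\xi_\mu\rangle = \mu(x)\langle\xi_\mu,\pi_\mu(a)\xi_\mu\rangle=\mu(a)\mu(x)$; the identity $\mu(xa)=\mu(x)\mu(a)$ follows symmetrically by applying the same reasoning to $\pi_\mu(x)^*\xi_\mu$ using $x^*\in\sX$.

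The step I expect to require the most care is verifying that $\|\pi_\mu(x)\xi_\mu-\mu(x)\xi_\mu\|^2=0$, i.e. that $\mu(x^*x)=|\mu(x)|^2$ for $x\in\sX$. This does not follow from part (1) alone, since $x^*x$ need not lie in $\sX$; rather one must exploit that $\Delta_\GG^u$ is a $*$-homomorphism and that $(\id\otimes\mu)\Delta_\GG^u$ fixes both $x$ and $x^*$, so that
\[
\mu(x^*x)=(\varepsilon\otimes\mu)(\Delta_\GG^u(x^*x))=(\varepsilon\otimes\mu)(\Delta_\GG^u(x^*)\Delta_\GG^u(x)),
\]
and then insert a further slice to factor this as $(\varepsilon\otimes\mu)(\Delta_\GG^u(x^*))\cdot(\text{something})$ — this is precisely the bookkeeping carried out in \cite{Sal}, and since the excerpt explicitly says the proof is "essentially the same as \cite[Lemma 3]{Sal}", I would simply reproduce that argument with $\Delta_\GG^u$ in place of $\Delta_\GG$ and the universal counit $\varepsilon$ on $\C_0^u(\GG)$, which exists and satisfies $(\id\otimes\varepsilon)\circ\Delta_\GG^u=\id=(\varepsilon\otimes\id)\circ\Delta_\GG^u$ as recalled in the preliminaries. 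No regularity or coamenability hypothesis is needed.
\end{proof}
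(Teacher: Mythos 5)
Your overall strategy is the right one: the paper gives no argument of its own here (it simply points to \cite[Lemma 3]{Sal}), and the standard proof is exactly what you outline --- the counit/left-invariance computation for (1), followed by a multiplicative-domain argument (GNS eigenvector or Cauchy--Schwarz) for (2), carried out with $\Delta_\GG^u$ and the universal counit, which exists without any coamenability assumption. However, the step you single out as the delicate one rests on a false premise, and the substitute computation you sketch is not a proof. You claim that $\mu(x^*x)=|\mu(x)|^2$ ``does not follow from part (1) alone, since $x^*x$ need not lie in $\sX$''. It does lie in $\sX$: by hypothesis $\sX$ is a $\C^*$-subalgebra of $\C_0^u(\GG)$, hence closed under products and adjoints (indeed you use $x^*\in\sX$ yourself two lines later). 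So part (1), together with the fact that $\varepsilon$ is a $*$-homomorphism, gives at once $\mu(x^*x)=\varepsilon(x^*x)=|\varepsilon(x)|^2=|\mu(x)|^2$ and $\mu(xx^*)=|\mu(x)|^2$; your GNS argument, or the one-line estimate $|\mu(a(x-\mu(x)\I))|^2\le\mu(aa^*)\,\mu\bigl((x-\mu(x)\I)^*(x-\mu(x)\I)\bigr)=0$ read in the unitization, then finishes (2). By contrast, the detour you propose instead --- writing $\mu(x^*x)=(\varepsilon\otimes\mu)\bigl(\Delta_\GG^u(x^*)\Delta_\GG^u(x)\bigr)$ and ``inserting a further slice to factor this as $(\varepsilon\otimes\mu)(\Delta_\GG^u(x^*))\cdot(\text{something})$'' --- is left as a gesture toward \cite{Sal} and does not factor as stated: a slice of a product of two elements of $\M(\C_0^u(\GG)\otimes\C_0^u(\GG))$ does not split without precisely the multiplicativity you are trying to establish. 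As written, the crucial identity in (2) is asserted rather than proved; the repair is simply to delete the detour and invoke $x^*x,\,xx^*\in\sX$.

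A smaller point on part (1): your description of the direction ``$\mu=\varepsilon$ on $\sX$ implies $\mu\in F_0$'' is loose (``slicing by $\varepsilon$ on the second leg already gives $x$ and $\mu$ and $\varepsilon$ agree on the relevant elements''). The clean form is to pair with an arbitrary $\nu\in\C_0^u(\GG)^*$ on the first leg: $\nu\bigl((\id\otimes\mu)\Delta_\GG^u(x)\bigr)=\mu\bigl((\nu\otimes\id)\Delta_\GG^u(x)\bigr)$, the element $(\nu\otimes\id)\Delta_\GG^u(x)$ lies in $\sX$ by left-invariance, so this equals $\varepsilon\bigl((\nu\otimes\id)\Delta_\GG^u(x)\bigr)=\nu\bigl((\id\otimes\varepsilon)\Delta_\GG^u(x)\bigr)=\nu(x)$, whence $(\id\otimes\mu)\Delta_\GG^u(x)=x$. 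The ingredients you list are the right ones; it is only the order of the slices that needs to be stated carefully, since it is the \emph{first}-leg slices that land in $\sX$, where $\mu$ and $\varepsilon$ agree.
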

The second claim  of Lemma \ref{pre} has the following  obvious extension.  
\begin{lemma}\label{pre1}
Let $H$ be a Hilbert space,  $T\in\M(\mathcal{K}(H)\otimes\sX)$ and $A\in\M(\mathcal{K}(H)\otimes\C^u_0(\GG))$. Then $(\id\otimes\mu)(TA) = (\id\otimes\mu)(T )(\id\otimes\mu)( A)$ for all $\mu\in F_0$. Similarly $(\id\otimes\mu)(AT) = (\id\otimes\mu)(A )(\id\otimes\mu)( T)$.
\end{lemma}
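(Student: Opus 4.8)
The plan is to derive the operator-valued identities from the scalar factorization in Lemma \ref{pre} by a standard slice-and-density argument, so the real content is already contained in Lemma \ref{pre}. First I would record the setup. Since $\sX\C_0^u(\GG)=\C_0^u(\GG)$ by our standing assumption and Proposition \ref{streg1}, the $\C^*$-algebra $\mathcal{K}(H)\otimes\sX$ is non-degenerate in $\M(\mathcal{K}(H)\otimes\C^u_0(\GG))$, whence $\M(\mathcal{K}(H)\otimes\sX)$ embeds into $\M(\mathcal{K}(H)\otimes\C^u_0(\GG))$ and the products $TA$, $AT$ are unambiguous there. Likewise the slice map $\id\otimes\mu\colon\mathcal{K}(H)\otimes\C^u_0(\GG)\to\mathcal{K}(H)$ is non-degenerate (pick $a$ with $\mu(a)\neq0$) and completely positive, hence extends uniquely to a unital map $\id\otimes\mu\colon\M(\mathcal{K}(H)\otimes\C^u_0(\GG))\to\B(H)$ that is strictly continuous on bounded sets; this is the map in the statement.

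Next I would check the two identities on elementary tensors. For $T=c\otimes x$ with $c\in\mathcal{K}(H)$, $x\in\sX$, and $A=c'\otimes a$ with $c'\in\mathcal{K}(H)$, $a\in\C_0^u(\GG)$, we have $TA=cc'\otimes xa$, so using $\mu(xa)=\mu(x)\mu(a)$ from Lemma \ref{pre},
\[(\id\otimes\mu)(TA)=cc'\,\mu(xa)=cc'\,\mu(x)\mu(a)=\big(c\,\mu(x)\big)\big(c'\,\mu(a)\big)=(\id\otimes\mu)(T)(\id\otimes\mu)(A),\]
and the computation for $AT=c'c\otimes ax$ is identical, using $\mu(ax)=\mu(a)\mu(x)$. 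Since elements of $\mathcal{K}(H)\otimes\sX$ and of $\mathcal{K}(H)\otimes\C_0^u(\GG)$ are norm-limits of finite sums of elementary tensors, linearity and norm-continuity give
\[(\id\otimes\mu)(TA)=(\id\otimes\mu)(T)(\id\otimes\mu)(A),\qquad(\id\otimes\mu)(AT)=(\id\otimes\mu)(A)(\id\otimes\mu)(T)\]
for all $T\in\mathcal{K}(H)\otimes\sX$ and $A\in\mathcal{K}(H)\otimes\C_0^u(\GG)$.

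Finally I would lift to the multiplier algebras by iterated strict limits. Fixing $T\in\mathcal{K}(H)\otimes\sX$ and writing $A=\lim_\kappa A_\kappa$ strictly with $(A_\kappa)$ a norm-bounded net in $\mathcal{K}(H)\otimes\C_0^u(\GG)$, strict continuity of left multiplication by $T$ and of $\id\otimes\mu$ on bounded sets extends both identities to all $A\in\M(\mathcal{K}(H)\otimes\C_0^u(\GG))$; then fixing such an $A$ and writing $T=\lim_\lambda T_\lambda$ strictly with $(T_\lambda)$ a norm-bounded net in $\mathcal{K}(H)\otimes\sX$ (e.g.\ $T_\lambda=e_\lambda Te_\lambda$ for an approximate unit $(e_\lambda)$ of $\mathcal{K}(H)\otimes\sX$), the same continuity, applied to right multiplication by $A$, extends them to all $T\in\M(\mathcal{K}(H)\otimes\sX)$. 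There is no genuine obstacle here; the only point needing a little care is the bookkeeping with the multiplier algebras and the strict topology, which is why I would first isolate the elementary-tensor case — where the claim is exactly the factorization in Lemma \ref{pre} — before invoking strict density.
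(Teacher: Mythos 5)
Your proof is correct and matches the intended argument: the paper states this lemma without proof, calling it an ``obvious extension'' of Lemma \ref{pre}(2), and your write-up supplies exactly the standard justification (elementary tensors via $\mu(xa)=\mu(x)\mu(a)$, then norm density and strict continuity of the slice map on bounded sets). The preliminary remark that $\sX\C_0^u(\GG)=\C_0^u(\GG)$ makes the products in $\M(\mathcal{K}(H)\otimes\C_0^u(\GG))$ well defined is a worthwhile point the paper leaves implicit.
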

The next lemma  is the universal counterpart of \cite[Lemma 4]{Sal}. We give here a simple proof.  
\begin{lemma}\label{fzero} 
Suppose that $\sX\subset\C_0^u(\GG)$ is a left-invariant and symmetric  $\C^*$-subalgebra. For every $\mu\in F_0$  and $a\in\C_0^u(\GG)$ such that $\mu(a^*a)\neq 0$, the functional $\mu_a(b):={\mu(a^*ba)}/{\mu(a^*a)}$ is in $F_0$.
\end{lemma}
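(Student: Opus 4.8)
The plan is to verify directly that $\mu_a$ is a state and that it restricts to the counit $\varepsilon$ on $\sX$; by part (1) of Lemma \ref{pre} this is exactly membership in $F_0$. Positivity of $\mu_a$ is immediate from positivity of $\mu$, since for $b\geq 0$ we have $a^*ba\geq 0$; and $\mu_a(\I)=\mu(a^*a)/\mu(a^*a)=1$ (after passing to multipliers, using that a state on a $\C^*$-algebra extends canonically to its multiplier algebra). So the only real content is to show $\mu_a(x)=\varepsilon(x)$ for every $x\in\sX$, i.e.\ $\mu(a^*xa)=\varepsilon(x)\,\mu(a^*a)$.

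The key step exploits symmetry of $\sX$. First I would reduce to checking the identity on a spanning set of $\sX$ of the form $(\omega\otimes\id)\bigl(\wW^\GG(\I\otimes x){\wW^\GG}^*\bigr)$ with $\omega\in B(\Ltwo(\GG))_*$ and $x\in\sX$ — symmetry says these lie in $\sX$, and by the weak Podle\'s condition (which is implicit once we are in the situation where symmetry is used, cf.\ the surrounding development) one gets enough of them. Alternatively, and more cleanly, I would argue abstractly: write $T={\wW^\GG}(\I\otimes x){\wW^\GG}^*\in\M(\mathcal{K}(\Ltwo(\GG))\otimes\sX)$ by symmetry, and consider $(\I\otimes a)^*T(\I\otimes a)=(\I\otimes a^*)T(\I\otimes a)\in\M(\mathcal{K}(\Ltwo(\GG))\otimes\C_0^u(\GG))$. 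Now apply $\id\otimes\mu$ and use Lemma \ref{pre1} twice (once for $AT$, once for $TA$): since $T\in\M(\mathcal{K}\otimes\sX)$ while $\I\otimes a,\I\otimes a^*\in\M(\mathcal{K}\otimes\C_0^u(\GG))$, we get
\[
(\id\otimes\mu)\bigl((\I\otimes a^*)T(\I\otimes a)\bigr)=(\id\otimes\mu)(\I\otimes a^*)\,(\id\otimes\mu)(T)\,(\id\otimes\mu)(\I\otimes a)=\overline{\mu(a)}\,\varepsilon(x)\,\mu(a)\,\I,
\]
where I used that $(\id\otimes\mu)(T)=\varepsilon(x)\I$ because $x\in\sX$ and $\wW^\GG$ is unitary (so $(\id\otimes\mu)({\wW^\GG}(\I\otimes x){\wW^\GG}^*)=\varepsilon(x)(\id\otimes\mu)({\wW^\GG}{\wW^\GG}^*)=\varepsilon(x)\I$, invoking Lemma \ref{pre1} again for the product of the unitaries). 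Setting $\omega$ to be any normal functional and slicing, this yields $\mu(a^* y a)=|\mu(a)|^2\varepsilon(y)$ for all $y$ in a dense subspace of $\sX$, hence for all $y\in\sX$ by continuity; in particular $\mu(a^*a)=|\mu(a)|^2$ (take $y=\I$), so $|\mu(a)|^2=\mu(a^*a)\neq 0$, and then $\mu_a(y)=\mu(a^*ya)/\mu(a^*a)=\varepsilon(y)$ as desired.

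The main obstacle I anticipate is bookkeeping around the multiplier algebras and the precise spanning set for $\sX$: one must make sure that the elements to which Lemma \ref{pre1} is applied genuinely lie in $\M(\mathcal{K}(H)\otimes\sX)$ versus $\M(\mathcal{K}(H)\otimes\C_0^u(\GG))$, and that symmetry is being used in the form $\wW^\GG(\I\otimes\sX){\wW^\GG}^*\subset\M(\C_0(\hh\GG)\otimes\sX)$ rather than the universal $\WW^\GG$-version. Everything else — positivity, normalization, the factorization of $\mu$ through products with elements of $\sX$ — is routine once Lemma \ref{pre1} is in hand, which is precisely why the paper records that lemma just before this one.
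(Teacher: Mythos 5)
The central step of your argument is not valid, and the identity it produces is in fact false. Lemma \ref{pre1} factorizes $(\id\otimes\mu)$ only on a product of \emph{two} elements, one of which lies in $\M(\mathcal{K}(H)\otimes\sX)$. In your computation of $(\id\otimes\mu)\bigl((\I\otimes a^*)T(\I\otimes a)\bigr)$ with $T={\wW^\GG}(\I\otimes x){\wW^\GG}^*$, neither grouping $(\I\otimes a^*)T$ nor $T(\I\otimes a)$ lies in $\M(\mathcal{K}(\Ltwo(\GG))\otimes\sX)$, so the lemma cannot be iterated to give the triple factorization $\overline{\mu(a)}\,\varepsilon(x)\,\mu(a)\,\I$. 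That the step is wrong can be seen from its consequence: your identity $\mu(a^*ya)=|\mu(a)|^2\varepsilon(y)$ would force (taking $y$ an approximate unit of $\sX$, or $y=\I$ after strict extension) $\mu(a^*a)=|\mu(a)|^2$ for every $a$, i.e.\ $\mu$ would be a character of $\C_0^u(\GG)$. But by Theorem \ref{mains4} the set $F_0$ consists of \emph{all} states factoring through $\C^u(\HH_\sX)$ --- for instance the Haar state of a nontrivial compact quantum subgroup --- and for such $\mu$ one can choose $a$ with $\mu(a)=0$ yet $\mu(a^*a)\neq0$, contradicting your formula. The correct conclusion keeps $\mu(a^*a)$ intact: one must prove $\mu(a^*xa)=\varepsilon(x)\,\mu(a^*a)$, not $|\mu(a)|^2\varepsilon(x)$.

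There is a second gap: even granting your factorization, you only reach elements $y$ of the form $(\omega\otimes\id)(T)$, and the density of such slices in $\sX$ is not among the hypotheses of the lemma (symmetry alone gives $T\in\M(\C_0(\hh\GG)\otimes\sX)$; a statement of the kind you need is a $\beta^u$-weak Podle\'s condition, which the paper establishes only under extra assumptions, cf.\ Lemma \ref{betaP}). The paper's proof avoids both problems by reducing on the \emph{other} leg: it suffices to show $\mu(axc)=\mu(x)\mu(ac)$ for all $x\in\sX$, $c\in\C_0^u(\GG)$ and $a=(\omega\otimes\id)(\wW^\GG)$ --- such slices \emph{are} norm-dense in $\C_0^u(\GG)$. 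Then $\mu(axc)=\omega\bigl((\id\otimes\mu)(\wW^\GG(\I\otimes xc))\bigr)$, and inserting ${\wW^\GG}^*\wW^\GG$ writes the argument as a product of exactly two factors, ${\wW^\GG}(\I\otimes x){\wW^\GG}^*\in\M(\C_0(\hh\GG)\otimes\sX)$ (symmetry) times $\wW^\GG(\I\otimes c)$, so Lemma \ref{pre1} applies once, and $\mu=\varepsilon$ on $\sX$ plus $(\id\otimes\varepsilon)(\wW^\GG)=\I$ give $\mu(axc)=\mu(x)\mu(ac)$; applying this with $a^*$ and $c=a$ yields $\mu_a=\varepsilon$ on $\sX$, hence $\mu_a\in F_0$ by Lemma \ref{pre}. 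Your positivity and normalization remarks are fine, but the heart of the proof needs this restructuring.
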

\begin{proof}
It suffices to prove that $\mu(axc) = \mu(x)\mu(ac)$ for all $x\in\sX$ and $a,c\in\C^u_0(\GG)$. Actually it suffices to prove the latter for $a = (\omega\otimes\id)(\wW^\GG)$. We compute 
\begin{align*}
\mu(axc)&= \omega((\id\otimes\mu)(\wW^\GG(\I\otimes xc))\\
& =\omega((\id\otimes\mu)(\wW^\GG(\I\otimes x)\wW^{\GG^*}\wW^\GG(\I\otimes c))\\
& = \omega((\id\otimes\mu)(\wW^\GG(\I\otimes x)\wW^{\GG^*})(\id\otimes\mu)(\wW^\GG(\I\otimes c)))\\
& = \omega((\id\otimes\varepsilon)(\wW^\GG(\I\otimes x)\wW^{\GG^*})(\id\otimes\mu)(\wW^\GG(\I\otimes c)))\\
& = \varepsilon(x)\omega( (\id\otimes\mu)(\wW^\GG(\I\otimes c)))\\
& = \mu(x) \mu(ac)
\end{align*}
where in the third equality we use symmetry  of $\sX$ and Lemma \ref{pre1} with $T = \wW^\GG(\I\otimes x){\wW^\GG}^*$ and $A = \wW^\GG(\I\otimes b)$; in the fifth equality we use $(\id\otimes\varepsilon)(\wW^\GG) =\I$. 
\end{proof}
Using Lemma \ref{fzero} we can prove the universal counterpart of \cite[Theorem 5]{Sal}
\begin{theorem}\label{mains4}
Let $\GG$ be a locally compact quantum group and $\sX\subset\C_0^u(\GG)$  a left-invariant  symmetric $\C^*$-subalgebra. A state $\mu\in S(\C_0^u(\GG))$ is in $F_0$ if and only if its GNS-representation is $\sX$-trivial. Moreover, if $(\HH,\pi)$ is the compact subgroup associated to $\sX$, then $F_0=\pi^*(S(\C^u(\HH)))$ where $\pi^*:\C^u(\HH)^*\to\C_0^u(\GG)^*$ is the adjoint of the quotient map $\pi$. 
\end{theorem}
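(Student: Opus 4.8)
The plan is to run the standard GNS argument for the stated equivalence, with Lemma~\ref{fzero} supplying the one non-formal input, and then to read off the description of $F_0$ from the fact that the quotient map $\pi$ onto $\C^u(\HH)$ is $\sX$-trivial. Throughout I shall use the reformulation $F_0 = \{\mu\in S(\C_0^u(\GG)) : \mu|_\sX = \varepsilon|_\sX\}$ from Lemma~\ref{pre}(1). For the easy implication, if the GNS representation $(\pi_\mu, H_\mu, \xi_\mu)$ of a state $\mu$ is $\sX$-trivial then $\pi_\mu(x) = \varepsilon(x)1$ for $x\in\sX$, so $\mu(x) = \langle\xi_\mu,\pi_\mu(x)\xi_\mu\rangle = \varepsilon(x)\|\xi_\mu\|^2 = \varepsilon(x)$ and $\mu\in F_0$.

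For the converse, fix $\mu\in F_0$ and $x\in\sX$. Because the vectors $\pi_\mu(a)\xi_\mu$, $a\in\C_0^u(\GG)$, are dense in $H_\mu$, it is enough to show $\pi_\mu(x)\pi_\mu(a)\xi_\mu = \varepsilon(x)\pi_\mu(a)\xi_\mu$ for every such $a$; if $\mu(a^*a)=0$ this is trivial since $\pi_\mu(a)\xi_\mu = 0$, so assume $\mu(a^*a)\neq 0$. Then Lemma~\ref{fzero} gives $\mu_a\in F_0$, i.e.\ $\mu(a^*ya) = \varepsilon(y)\,\mu(a^*a)$ for all $y\in\sX$. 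Expanding the square norm and applying this identity to $y = x^*x$, $y = x^*$ and $y = x$ (all in $\sX$),
\[\|\pi_\mu(x)\pi_\mu(a)\xi_\mu - \varepsilon(x)\pi_\mu(a)\xi_\mu\|^2 = \mu\left(a^*\bigl(x^*x - \varepsilon(x)x^* - \overline{\varepsilon(x)}x + |\varepsilon(x)|^2\bigr)a\right) = 0.\]
Hence $\pi_\mu(x) = \varepsilon(x)1$ on $H_\mu$ for every $x\in\sX$, and since the GNS representation is non-degenerate this is exactly the statement that $\pi_\mu$ is $\sX$-trivial, i.e.\ $\pi_\mu\in T_\sX$.

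For the last assertion I would first observe that $\pi\colon\C_0^u(\GG)\to\C^u(\HH)$ is itself $\sX$-trivial: by Lemma~\ref{lemcont}, $\sX\subset\sX_{\HH_\sX}$, so (using the description of this set via Lemma~\ref{eqx}) $(\id\otimes\pi)(\Delta_\GG^u(x)) = x\otimes 1_\HH$ for $x\in\sX$, and slicing with $\varepsilon\otimes\id$ together with $(\varepsilon\otimes\id)\circ\Delta_\GG^u = \id$ yields $\pi(x) = \varepsilon(x)1_\HH$. Therefore, for $\nu\in S(\C^u(\HH))$ the state $\nu\circ\pi$ agrees with $\varepsilon$ on $\sX$, giving $\pi^*(S(\C^u(\HH)))\subset F_0$. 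Conversely, if $\mu\in F_0$, then by the equivalence just proved $\pi_\mu\in T_\sX$, so $I_\sX\subset\ker\pi_\mu$, and $\pi_\mu$ factors as $\pi_\mu = \tilde\pi_\mu\circ\pi$ through $\C_0^u(\GG)/I_\sX = \C^u(\HH)$ for a non-degenerate, hence unital, representation $\tilde\pi_\mu$; then $\nu := \langle\xi_\mu, \tilde\pi_\mu(\,\cdot\,)\xi_\mu\rangle$ is a state on $\C^u(\HH)$ with $\nu\circ\pi = \mu$, so $\mu\in\pi^*(S(\C^u(\HH)))$ and $F_0 = \pi^*(S(\C^u(\HH)))$.

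The genuine content sits entirely in Lemma~\ref{fzero} (which is where symmetry of $\sX$ enters), and that is already available, so I expect no serious obstacle; the only points needing a little care are the separate treatment of the null vectors $\mu(a^*a) = 0$, the non-unitality of $\C_0^u(\GG)$ (so that $\varepsilon(x)1$ is read in the appropriate multiplier algebra), and checking that the factorisation $\pi_\mu = \tilde\pi_\mu\circ\pi$ really yields a unital representation of the compact quantum group algebra $\C^u(\HH)$.
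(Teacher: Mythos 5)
Your argument is correct and follows essentially the same route the paper intends: the paper proves Theorem \ref{mains4} as the universal counterpart of Salmi's Theorem~5, i.e.\ precisely by the GNS argument with Lemma \ref{fzero} (via Lemma \ref{pre}) as the key input, and by reading off $F_0=\pi^*(S(\C^u(\HH)))$ from the factorisation of $\sX$-trivial representations through the quotient $\C_0^u(\GG)/I_\sX=\C^u(\HH)$. The points you flag (null vectors, non-unitality, unitality of the factored representation) are exactly the routine checks and are handled correctly.
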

Finally let us prove the universal counterpart of \cite[Theorem 10]{Sal}.
\begin{theorem}\label{mains5}
Let $\GG$ be a locally compact quantum group and $\sX$   a non-zero, symmetric, left-invariant $\C^*$-subalgebra of $\C_0^u(\GG)$ such that there is a conditional expectation 
$E : \C_0^u(\GG)\to\C_0^u(\GG)$ onto $\sX$ satisfying  $(\id\otimes E)\circ\Delta_\GG^u = \Delta_\GG^u \circ E$ . Then $\sX_{\HH_\sX}=\sX$.
\end{theorem}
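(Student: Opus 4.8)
The plan is to prove the inclusion $\sX_{\HH_\sX}\subset\sX$; combined with Lemma~\ref{lemcont}, which gives the reverse inclusion $\sX\subset\sX_{\HH_\sX}$, this yields the claimed equality. The first move is to rewrite the given conditional expectation $E$ in the form that occurs in the definition of $\sX_\HH$. Put $\omega:=\eps\circ E\in S(\C_0^u(\GG))$. Using only the covariance hypothesis $(\id\otimes E)\circ\Delta_\GG^u=\Delta_\GG^u\circ E$ together with the counit identity $(\id\otimes\eps)\circ\Delta_\GG^u=\id$, one gets, for every $x\in\C_0^u(\GG)$,
\[(\id\otimes\omega)\Delta_\GG^u(x)=(\id\otimes\eps)\bigl((\id\otimes E)\Delta_\GG^u(x)\bigr)=(\id\otimes\eps)\Delta_\GG^u(E(x))=E(x),\]
so $E=(\id\otimes\omega)\circ\Delta_\GG^u$. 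Since $E$ is the identity on $\sX$, the same computation shows $(\id\otimes\omega)\Delta_\GG^u(x)=x$ for all $x\in\sX$, i.e.\ $\omega\in F_0$ (equivalently, $\omega=\eps$ on $\sX$, by Lemma~\ref{pre}(1)).

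The key step is to push $\omega$ down to the quotient $\C^u(\HH_\sX)=\C_0^u(\GG)/I_\sX$, and this is precisely where symmetry of $\sX$ is used. By Theorem~\ref{mains4} we have $F_0=\pi^*\bigl(S(\C^u(\HH_\sX))\bigr)$, where $\pi\colon\C_0^u(\GG)\to\C^u(\HH_\sX)$ is the quotient map, i.e.\ the homomorphism associated with $\HH_\sX$ that appears in the definition of $\sX_{\HH_\sX}$. Hence $\omega=\omega'\circ\pi$ for some state $\omega'$ on $\C^u(\HH_\sX)$; in particular $\omega'(\I)=1$. Now fix $x\in\sX_{\HH_\sX}$ and use the description $\sX_{\HH_\sX}=\sX_{\HH_\sX}^3$ from Lemma~\ref{eqx}, so that $(\id\otimes\pi)\Delta_\GG^u(x)=x\otimes\I$. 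Then
\[E(x)=(\id\otimes\omega)\Delta_\GG^u(x)=(\id\otimes\omega')\bigl((\id\otimes\pi)\Delta_\GG^u(x)\bigr)=(\id\otimes\omega')(x\otimes\I)=x,\]
so $x\in E(\C_0^u(\GG))=\sX$. This proves $\sX_{\HH_\sX}\subset\sX$ and completes the argument.

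The only genuinely nontrivial point is the factorization $\omega=\omega'\circ\pi$: without symmetry of $\sX$ one cannot invoke Theorem~\ref{mains4}, and there is no reason for $\omega$ to be the pull-back of a state on $\C^u(\HH_\sX)$; that is where the hypotheses do their work. Everything else is a short formal manipulation using the covariance of $E$, the counit property of $\eps$, and the $\sX_\HH^3$-description of $\sX_{\HH_\sX}$. It is worth noting that, just as in \cite[Theorem~10]{Sal}, one does not need the detailed internal structure of $\HH_\sX$ here, only its universal $\C^*$-algebra as a quotient of $\C_0^u(\GG)$ and the conclusion of Theorem~\ref{mains4}.
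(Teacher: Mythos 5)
Your proposal is correct and takes essentially the same route as the paper: both inclusions rest on Lemma \ref{lemcont}, on the state $\eps\circ E$ belonging to $F_0$ (hence, via Theorem \ref{mains4}, coming from a state on $\C^u(\HH_\sX)$), and on the covariance $(\id\otimes E)\circ\Delta_\GG^u=\Delta_\GG^u\circ E$. The only cosmetic difference is that you work with the $\sX_\HH^3$-description of $\sX_{\HH_\sX}$ and the explicit factorization $\omega=\omega'\circ\pi$, while the paper uses the $\sX_\HH^1$-description together with $F_0=F$; these are the same content by Lemma \ref{eqx} and the definition of $F$.
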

\begin{proof}
Using Lemma \ref{lemcont} we get  $\sX\subset \sX_{\HH_\sX}$. Conversely, let $a\in \sX_{\HH_\sX}$. Clearly  $\varepsilon\circ E\in F_0$ and  since in our case $F_0 = F$ (see Theorem \ref{mains4})   we have
$$a=(\id\otimes \varepsilon\circ E )(\Delta_\GG^u(a))=(\id\otimes \varepsilon )(\Delta_\GG^u (E(a)))=E(a).$$
So $a$ is in the image of $E$ which is $\sX$.
\end{proof}
Let $\HH$ be a compact quantum subgroup of $\GG$. Then $\Linf(\hh\HH)$ is a codual coideal of  $\Linf(\GG/\HH)$. Since $\Linf(\GG/\HH) = \Lambda_\GG(\sX_\HH)''$, $\sX_\HH$ can be used to recover $\HH$, i.e.  the assignment $\HH\mapsto\sX_{\HH}$ is injective. Theorem \ref{mains5} yields $\sX_{\HH}=\sX_{\HH_{\sX_{\HH}}}$ and we conclude that $\HH_{\sX_{\HH}} = \HH$. Summarizing we get
 \begin{theorem}\label{1to1coresp}
Let $\GG$ be a locally compact quantum group. There is a 1-1 correspondence between compact quantum subgroups of $\GG$ and  symmetric, left-invariant $\C^*$-subalgebras $\sX$ of $\C_0^u(\GG)$ equipped with a conditional expectation $E :\C_0^u(\GG)\to \C_0^u(\GG)$ onto $\sX$ such that $(\id\otimes E)\circ\Delta_\GG^u=\Delta_\GG^u\circ E$.
\end{theorem}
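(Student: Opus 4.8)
The plan is to write down the two maps realizing the stated bijection and check they are mutually inverse, drawing on essentially everything assembled so far in this section. In one direction, to a compact quantum subgroup $\HH\subset\GG$ I would assign the pair consisting of $\sX_\HH=\C_0^u(\GG/\HH)\subset\C_0^u(\GG)$ together with the conditional expectation $E=(\id\otimes\varphi_\HH\pi)\circ\Delta_\GG^u$ onto it; by Lemma \ref{eqx} and the two theorems immediately preceding it, $\sX_\HH$ is non-zero, left-invariant and symmetric, and $E$ satisfies $(\id\otimes E)\circ\Delta_\GG^u=\Delta_\GG^u\circ E$, so this assignment lands in the claimed class. In the other direction, to a symmetric left-invariant $\sX\subset\C_0^u(\GG)$ carrying such a covariant conditional expectation I would assign the compact quantum subgroup $\HH_\sX$ determined by $\C^u(\HH_\sX)=\C_0^u(\GG)/I_\sX$.

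The composition $\sX\mapsto\HH_\sX\mapsto\sX_{\HH_\sX}$ returns $\sX$ by Theorem \ref{mains5}; moreover the conditional expectation attached to $\sX_{\HH_\sX}$ coincides with the original $E$, since by \cite[Theorem 5]{SaS} a $\Delta_\GG^u$-covariant conditional expectation onto a fixed subalgebra is recovered from, hence determined by, the associated idempotent state $\varepsilon\circ E$. For the reverse composition $\HH\mapsto\sX_\HH\mapsto\HH_{\sX_\HH}$, the idea is first to note that $\sX_\HH$ already determines $\HH$: the reducing morphism $\Lambda_\GG$ sends $\sX_\HH$ onto a $\sigma$-weakly dense subalgebra of $\Linf(\GG/\HH)=\Lambda_\GG(\sX_\HH)''$, whose codual coideal inside $\Linf(\hh\GG)$ is $\Linf(\hh\HH)$, and the latter together with the comultiplication inherited from $\hh\GG$ reconstructs $\hh\HH$, hence $\HH$. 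Thus $\HH\mapsto\sX_\HH$ is injective; applying Theorem \ref{mains5} to $\sX=\sX_\HH$ gives $\sX_{\HH_{\sX_\HH}}=\sX_\HH$, and injectivity then forces $\HH_{\sX_\HH}=\HH$. Putting the two directions together yields the asserted 1-1 correspondence.

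The step I expect to be the crux is the reconstruction of $\HH$ from $\sX_\HH$, i.e. making precise that the chain $\sX_\HH\rightsquigarrow\Linf(\GG/\HH)\rightsquigarrow\Linf(\hh\HH)\rightsquigarrow\HH$ recovers the quantum subgroup structure. Concretely one must verify that the codual of the coideal $\Linf(\GG/\HH)$ is exactly $\Linf(\hh\HH)$ (using the double-codual identity $\dd{\dd{\N}}=\N$ recalled in Section \ref{Prel}), that $\Linf(\hh\HH)$ thereby sits as a Baaj--Vaes subalgebra of $\Linf(\hh\GG)$, and that compactness of $\HH$ is precisely what the existence of a conditional expectation onto $\sX_\HH$ encodes. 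Everything else in the argument is bookkeeping built on Theorem \ref{mains5}, Lemma \ref{eqx}, and the codual formalism, and should go through without further difficulty.
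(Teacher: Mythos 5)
Your argument follows the paper's own proof: the forward map $\HH\mapsto\sX_\HH$ (already shown to be left-invariant, symmetric and equipped with a covariant conditional expectation in the results preceding the theorem), Theorem \ref{mains5} to get $\sX_{\HH_\sX}=\sX$, and injectivity of $\HH\mapsto\sX_\HH$ via $\Linf(\GG/\HH)=\Lambda_\GG(\sX_\HH)''$ and its codual coideal $\Linf(\hh\HH)$, which is exactly the chain of reasoning in the paragraph preceding Theorem \ref{1to1coresp}. The only soft spot is your aside that the conditional expectation attached to $\sX_{\HH_\sX}$ coincides with the original $E$: the identity $E=(\id\otimes\varepsilon\circ E)\circ\Delta_\GG^u$ shows $E$ is determined by the state $\varepsilon\circ E$, but not yet that $\varepsilon\circ E=\varphi_{\HH_\sX}\circ\pi$; this uniqueness claim is, however, neither needed for the bijection as the paper proves it nor addressed there, and it can be repaired by noting $\varepsilon\circ E\in F_0=\pi^*(S(\C^u(\HH_\sX)))$ and that the image condition forces the corresponding state on $\C^u(\HH_\sX)$ to be invariant, hence the Haar state.
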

\subsection{Normal coideals and quantum subgroups}
In the next theorem we get a 1-1 correspondence between  idempotent states of Haar type (i.e. the states corresponding to a Haar measure on a compact quantum subgroup   $\HH$ of $\GG$) and normal integrable coideals  $\sN\subset \Linf(\GG)$ preserved by the scaling group. 
\begin{theorem}\label{comchar}
Let $\sN$ be a normal integrable coideal von Neumann subalgebra of $\Linf(\GG)$ which is $\tau_t$-invariant. Then 
there exists a unique compact quantum subgroup $\HH\subset \GG$ such that $\sN=\Linf(\GG/\HH)$.
\end{theorem}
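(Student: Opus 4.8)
The plan is to derive the statement from the universal $\C^*$-algebraic classification obtained in Theorem \ref{1to1coresp}.

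First I would run Theorem \ref{thminterg}: since $\sN$ is an integrable coideal preserved by the scaling group, it yields a unique idempotent state $\omega\in S(\C_0^u(\GG))$ whose associated conditional expectation $E\colon\Linf(\GG)\to\Linf(\GG)$, $E(x)=\omega\staru x$, satisfies $\sN=E(\Linf(\GG))=\{x:\omega\staru x=x\}$. Passing to the $\C^*$-level as recorded in Section \ref{Prel}, one gets the non-zero, non-degenerate, left-invariant $\C^*$-subalgebras $\sX:=E(\C_0(\GG))=\sN\cap\C_0(\GG)\subset\C_0(\GG)$ and $\sX^u:=E^u(\C_0^u(\GG))\subset\C_0^u(\GG)$, where $E^u=(\id\otimes\omega)\circ\Delta_\GG^u$; they satisfy Podle\'s condition, $\Lambda_\GG(\sX^u)=\sX$, and, since $E$ is normal and $\C_0(\GG)$ is $\sigma$-weakly dense in $\Linf(\GG)$, $\sX$ is $\sigma$-weakly dense in $\sN$, so $\sX''=\sN$. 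Coassociativity of $\Delta_\GG^u$ gives the covariance $(\id\otimes E^u)\circ\Delta_\GG^u=\Delta_\GG^u\circ E^u$, whence $\Delta_\GG^u(\sX^u)\subset\M(\C_0^u(\GG)\otimes\sX^u)$ and likewise $\Delta_\GG(\sX)\subset\M(\C_0(\GG)\otimes\sX)$; in particular $\sX^u$ is a strong lift of $\sX$ in the sense of Definition \ref{sul}.

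The only place where normality of $\sN$ enters is in proving that $\sX^u$ is \emph{symmetric}, i.e. $\wW^\GG(\I\otimes\sX^u){\wW^\GG}^*\subset\M(\C_0(\hh\GG)\otimes\sX^u)$. For this I would first show that the reduced algebra $\sX$ is symmetric in $\M(\C_0(\GG))$: for $x\in\sX$ the element $\ww^\GG(\I\otimes x){\ww^\GG}^*$ is $\beta(x)$ for the $\hh\GG$-action $\beta$ of \eqref{bet}, hence lies in $\M(\C_0(\hh\GG)\otimes\C_0(\GG))$, while by normality of $\sN$ it also lies in $\Linf(\hh\GG)\vtens\sN$; combining this with $\sX''=\sN$ and Podle\'s condition for $\sX$ — slicing the first leg against functionals of the form $c\cdot\mu$ with $c\in\C_0(\hh\GG)$, exactly as in the proofs of Propositions \ref{streg} and \ref{streg1} — one pins it down inside $\M(\C_0(\hh\GG)\otimes\sX)$. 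Once $\sX$ is known to be symmetric, Proposition \ref{sym}, applied with $\sB=\sX$ and its strong lift $\sB^u=\sX^u$, yields that $\sX^u$ is symmetric.

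At this point $\sX^u$ is a non-zero, symmetric, left-invariant $\C^*$-subalgebra of $\C_0^u(\GG)$ carrying the conditional expectation $E^u$ with $(\id\otimes E^u)\circ\Delta_\GG^u=\Delta_\GG^u\circ E^u$, so Theorem \ref{1to1coresp} supplies a unique compact quantum subgroup $\HH\subset\GG$ with $\sX^u=\sX_\HH=\C_0^u(\GG/\HH)$. Applying $\Lambda_\GG$ and taking $\sigma$-weak closures gives $\sN=\sX''=\Lambda_\GG(\sX^u)''=\Lambda_\GG(\C_0^u(\GG/\HH))''=\Linf(\GG/\HH)$. Uniqueness of $\HH$ is then automatic, since $\sN$ determines $E$, hence $\omega$, hence $\sX^u$, and the assignment $\HH\mapsto\sX_\HH$ is injective (as recorded before Theorem \ref{1to1coresp}). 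The hard part will be the middle step, namely transferring normality of the von Neumann coideal $\sN$ to symmetry of the $\C^*$-algebra $\sX$: this is a passage from von Neumann algebras down to $\C^*$-algebras, and the delicate point is that $E$ does \emph{not} commute with $\beta$ in general — by Proposition \ref{lemtr}, $\beta$-covariance of $E$ would force $\HH$ to be of Kac type — so one cannot simply push $\ww^\GG(\I\otimes\sN){\ww^\GG}^*\subset\Linf(\hh\GG)\vtens\sN$ through $E$, and one must instead exploit the weak-$*$ density $\sX''=\sN$ together with the Podle\'s-type identities.
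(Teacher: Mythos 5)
Your overall route is the paper's: Theorem \ref{thminterg} produces the idempotent state $\omega$ and its expectation $E$, one forms $\sX=\bigl\{x\in\C_0(\GG)\st\omega\staru x=x\bigr\}$ and its universal version $\sX^u$, proves symmetry of $\sX$ from normality of $\sN$, lifts it to $\sX^u$ via Proposition \ref{sym}, invokes Theorem \ref{1to1coresp}, and recovers $\sN=\Linf(\GG/\HH)$ by applying $\Lambda_\GG$ and taking $\sigma$-weak closures; uniqueness via injectivity of $\HH\mapsto\sX_\HH$ is also as in the paper. So the architecture is identical, and all the routine steps you list (covariance of $E^u$, Podle\'s condition, the strong lift, $\sX''=\sN$) are fine.

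The one place you deviate -- the step you yourself flag as the hard part -- rests on a misconception and, as written, has a gap. No one needs $\beta$-covariance of $E$ on all of $\Linf(\GG)$ (which, as you note, would force $\HH$ to be of Kac type by Proposition \ref{lemtr}). What the paper uses is much weaker: for $x\in\sX\subset\sN$, normality gives $\ww^\GG(\I\otimes x){\ww^\GG}^*\in\Linf(\hh\GG)\vtens\sN$, and since $E$ restricts to the identity on $\sN$, this element is \emph{fixed} by $\id\otimes E$; as it is also a multiplier of $\C_0(\hh\GG)\otimes\C_0(\GG)$ and $E$ is an $\sX$-bimodule projection, the two-leg analogue of \eqref{eq:ce_mult} (fixed points of $\id\otimes E$ in $\M(\C_0(\hh\GG)\otimes\C_0(\GG))$ are exactly $\M(\C_0(\hh\GG)\otimes\sX)$) yields symmetry of $\sX$ in one line. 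Your substitute -- slicing the first leg ``as in Propositions \ref{streg} and \ref{streg1}'' -- is incomplete: from normality you do get that every first-leg slice of $\beta(x)(c\otimes a)$, with $c\in\C_0(\hh\GG)$ and $a\in\sX$, lies in $\sN\cap\C_0(\GG)=\sX$, but passing from ``all slices lie in $\sX$'' to membership in $\C_0(\hh\GG)\otimes\sX$ is a slice-map property which fails for general closed subspaces; here it holds precisely because one may apply the bounded map $\id\otimes E$ and use that slices separate points of the minimal tensor product -- i.e.\ you are driven back to the conditional-expectation argument you set aside. With that step repaired (or simply replaced by the paper's one-line argument), your proposal is correct.
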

\begin{proof}
Let $\omega\in\C_0^u(\GG)^*$ be the idempotent state corresponding to $\sN$ as described in Theorem \ref{thminterg}. We define 
\[\sX^r=\bigl\{x\in\C_0(\GG)\st \omega\staru x=x\bigr\}^{\text{\tiny{norm-cls}}}\] and the universal lift of $\sX^r$ \[\sX^u=\bigl\{y\in\C^u_0(\GG)\st (\id\otimes\omega)(\Delta^u(y)) = y\bigr\}^{\text{\tiny{norm-cls}}}.\] Let $E:\Linf(\GG)\to\Linf(\GG)$ be the conditional expectation assigned to $\omega$. The normality of $\sN$ implies that $\sX^r$ is symmetric.
Indeed \[\ww^{\GG}(\I\otimes \sX^r) {\ww^\GG}^*\subset \M(\C_0(\hh\GG)\otimes \C_0(\GG))\] and 
\[(\id\otimes E)(\ww^{\GG}(\I\otimes \sX^r) {\ww^\GG}^*) = \ww^{\GG}(\I\otimes \sX^r) {\ww^\GG}^*\] i.e.\[\ww^{\GG}(\I\otimes \sX^r) {\ww^\GG}^*\subset \M(\C_0(\hh\GG)\otimes \sX^r).\] 
 Using Proposition \ref{sym} we conclude that $\sX^u$ is symmetric \[\wW^\GG(\I\otimes \sX^u){\wW^\GG}^*\subset \M(\C_0(\hh\GG)\otimes \sX^u)\]
so according to Theorem \ref{1to1coresp} there exists a compact quantum group $\HH$ of $\GG$ such that $\sX^u=\C_0^u(\GG/\HH)$. Since  $\sX^r=\Lambda_\GG(\sX^u)=\Lambda_\GG(\C_0^u(\GG/\HH))=\C_0(\GG/\HH)$ we get  $\sN=\Linf(\GG/\HH)$.
\end{proof}
\section{Open quantum  subgroups of $\GG$ and idempotent states on $\hh\GG$}\label{openidstates}
In this section  we establish a 1-1 correspondence between   open quantum subgroups of $\GG$ and      central idempotent states on $\hh\GG$. 
\begin{theorem}\label{openthm1}
Let $\HH$ be an open quantum subgroup of a locally compact quantum group $\GG$. Then there exists a conditional expectation $E:\Linf(\hh\GG)\to \Linf(\hh\HH)$ such that $(\id\otimes E)\circ\Delta_{\hh\GG}=\Delta_{\hh\GG}\circ E =(E\otimes\id)\circ\Delta_{\hh\GG}$. Conversely for a von Neumann subalgebra $\sN$ of $\Linf(\hh\GG)$ equipped with a conditional expectation $E:\Linf(\hh\GG)\to \Linf(\hh\GG)$ onto $\sN$ satisfying 
\begin{equation}\label{E}
(\id\otimes E)\circ\Delta_{\hh\GG}=\Delta_{\hh\GG}\circ E =(E\otimes\id)\circ\Delta_{\hh\GG}
\end{equation}
 there exists a unique  open quantum subgroup $\HH$ of $\GG$ such that $\sN=\Linf(\hh\HH)$.
\end{theorem}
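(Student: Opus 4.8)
The plan is to factor the asserted bijection through \emph{central} idempotent states on $\hh\GG$: conditional expectations $E$ on $\Linf(\hh\GG)$ satisfying \eqref{E} correspond to central idempotent states on $\hh\GG$, and the latter correspond, via their support projections, to the central group-like projections in $\Linf(\GG)$, hence by \cite[Theorem~4.3]{KKS} to open quantum subgroups of $\GG$.

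\emph{The dictionary with central idempotent states.} First I would observe that \eqref{E} forces $E$ to preserve both Haar weights of $\hh\GG$: applying $\psi_{\hh\GG}\otimes\id$ to $(\id\otimes E)\circ\Delta_{\hh\GG}=\Delta_{\hh\GG}\circ E$ and using right-invariance of $\psi_{\hh\GG}$ together with normality of $E$ yields $\psi_{\hh\GG}\circ E=\psi_{\hh\GG}$, and symmetrically $\varphi_{\hh\GG}\circ E=\varphi_{\hh\GG}$ from the right-covariance. Hence $\sN=E(\Linf(\hh\GG))$ is a $\psi_{\hh\GG}$-expected coideal of $\Linf(\hh\GG)$, and Theorem~\ref{thminterg1}, applied with $\hh\GG$ in place of $\GG$, produces a unique idempotent state $\omega\in\C_0^u(\hh\GG)^*$ with $E=(\id\otimes\omega)\circ\Delta_{\hh\GG}^u$ and $\sN=\{x\in\Linf(\hh\GG):\omega\staru x=x\}$. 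Rewriting the right-covariance $(E\otimes\id)\circ\Delta_{\hh\GG}=\Delta_{\hh\GG}\circ E$ in terms of $\omega$ and slicing the first leg of $\Delta_{\hh\GG}$ turns it into the identity $(\omega\otimes\id)\circ\Delta_{\hh\GG}=(\id\otimes\omega)\circ\Delta_{\hh\GG}$, i.e.\ $\omega$ is central. Conversely, a central idempotent state $\omega$ on $\hh\GG$ gives, via $E(x)=\omega\staru x$, a conditional expectation that is left-covariant because $\omega$ is idempotent and right-covariant because $\omega$ is central, so \eqref{E} holds; along the way, \eqref{prescgr1} gives $\tau_t^{\hh\GG}(\sN)=\sN$, invariance of idempotent states under the universal unitary antipode (cf.\ \cite[Proposition~4]{SaS}) gives $R_{\hh\GG}(\sN)=\sN$, and \eqref{E} forces $\Delta_{\hh\GG}(\sN)\subset(\Linf(\hh\GG)\vtens\sN)\cap(\sN\vtens\Linf(\hh\GG))=\sN\vtens\sN$, so $\sN$ is a Baaj--Vaes subalgebra of $\Linf(\hh\GG)$.

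\emph{From a central idempotent state to an open quantum subgroup.} Given a central idempotent state $\omega$ on $\hh\GG$ with conditional expectation $E(x)=\omega\staru x$, let $P$ be the $\Ltwo(\hh\GG)$-implementation of $E$ obtained from the Kadison inequality exactly as in the proof of Theorem~\ref{thminterg}. I would show, using the left-covariance of $E$, that $P\in\Linf(\GG)\subset\B(\Ltwo(\GG))=\B(\Ltwo(\hh\GG))$; using the right-covariance, that $P$ is central in $\Linf(\GG)$; and using the bimodule property of the conditional expectation $E$, that $\Delta_\GG(P)(\I\otimes P)=P\otimes P$, i.e.\ $P$ is a group-like projection. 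By \cite[Theorem~4.3]{KKS}, $P=\I_\HH$ for a unique open quantum subgroup $\HH\subset\GG$, and unwinding the definition of $\I_\HH$ identifies $\sN=\{x:\omega\staru x=x\}$ with $\Linf(\hh\HH)$; in particular (by uniqueness in \cite[Theorem~4.3]{KKS} and Theorem~\ref{thminterg1}) $\HH$ is determined by $(\sN,E)$. For the forward implication one runs this in reverse: if $\HH$ is open in $\GG$ then $\HH$ is closed (\cite[Theorem~3.6]{KKS}), so $\Linf(\hh\HH)\subset\Linf(\hh\GG)$ is a Baaj--Vaes subalgebra, and the central group-like projection $\I_\HH$ yields a central idempotent state $\omega$ on $\hh\GG$ whose conditional expectation $E$ has range $\Linf(\hh\HH)$ and satisfies \eqref{E} by the dictionary above.

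\emph{Main obstacle.} The crux is the middle step of the converse: turning the implementing projection $P$ into a central group-like projection in the dual von Neumann algebra, and matching $\sN$ with $\Linf(\hh\HH)$. This is exactly where the \emph{normality} of $E$ enters --- it is what makes $P$ exist --- and it is the quantum counterpart of the classical fact that the group von Neumann algebra of $G$ admits a bi-invariant normal conditional expectation onto that of a subgroup precisely when the subgroup is open. The remaining points (the weight-invariance slicings, the passage between covariance of $E$ and centrality of $\omega$, and the $R$- and $\tau$-invariance of $\sN$) are routine given the results of Section~\ref{integcoid} and of \cite{KKS,SaS}.
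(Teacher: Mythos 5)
Your overall strategy (factor the statement through central idempotent states on $\hh\GG$ and central group-like projections in $\Linf(\GG)$, then invoke \cite[Theorem 4.3]{KKS}) is genuinely different from the paper's argument, and its first layer is sound: the weight-preservation slicings, the application of Theorem \ref{thminterg1} to $\hh\GG$, and the passage from \eqref{E} to centrality of $\omega$ (which is essentially how Corollary \ref{11op} is proved, though note that upgrading $\omega\staru\mu=\mu\staru\omega$ from $\mu\in\Linf(\hh\GG)_*$ to all of $\C_0^u(\hh\GG)^*$ needs the ideal-plus-Podle\'s argument, and your displayed identity ``$(\omega\otimes\id)\circ\Delta_{\hh\GG}=(\id\otimes\omega)\circ\Delta_{\hh\GG}$'' does not literally parse for a universal functional). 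The problem is the middle of your converse, which you yourself label ``the crux'': the claims that the $\Ltwo$-implementation $P$ of $E$ lies in $\Linf(\GG)$, is central there, satisfies $\Delta_\GG(P)(\I\otimes P)=P\otimes P$, and that $\sN$ is then identified with $\Linf(\hh\HH)$, are only announced (``I would show''), with no argument supplied. None of these is routine. Left-covariance, run through the mechanism of the proof of Theorem \ref{thminterg}, only gives that $P$ commutes with the von Neumann algebra generated by slices of the canonical implementation of $\Delta_{\hh\GG}$; to conclude $P\in\Linf(\GG)$ you must identify that algebra (or its commutant), which requires genuine input from \cite{VaesCan} and the commutation theory of $\Linf(\GG)$ and $\Linf(\hh\GG)$ inside $\B(\Ltwo(\GG))$. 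Likewise, the group-like identity for $P$ lives on the $\GG$-side of the duality, while the bimodule property of $E$ lives on the $\hh\GG$-side; transporting one into the other requires nontrivial manipulations with $\ww^\GG$ and the GNS map of $\psi_{\hh\GG}$, not a formal consequence. Finally, in your forward direction the step ``$\I_\HH$ yields a central idempotent state whose conditional expectation has range $\Linf(\hh\HH)$'' is again asserted without proof; producing an idempotent state from a (central) group-like projection is itself a theorem-level statement (it is the content of the Remark following Corollary \ref{11op}, which the paper deduces \emph{from} Theorem \ref{openthm1} and \cite{KKSS}, so you cannot quote it without circularity).

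For contrast, the paper's proof sidesteps all of this. For the converse it observes that \eqref{E} makes $\sN$ an invariant subalgebra and that both restricted Haar weights are semifinite on $\sN$ (the two slicing computations you also perform), so that $(\sN,\Delta_{\hh\GG}|_{\sN},\varphi_{\hh\GG}|_{\sN},\psi_{\hh\GG}|_{\sN})$ is itself a locally compact quantum group, and then cites \cite[Theorem 7.5]{KKS} to recognize it as $\Linf(\hh\HH)$ for an open $\HH\subset\GG$; no implementing projection and no group-like projection ever appear. For the forward direction it uses that an open subgroup is closed, that $\Linf(\hh\HH)\subset\Linf(\hh\GG)$ is a $\hh\tau_t$-invariant integrable coideal by \cite[Corollary 3.4]{int}, and then Theorem \ref{thminterg} gives $E$ with left-covariance, the right-covariance following by rerunning the same argument since $\Delta_{\hh\GG}(\Linf(\hh\HH))\subset\Linf(\hh\HH)\vtens\Linf(\hh\HH)$. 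If you want to keep your route, you must actually prove the three properties of $P$ and the identification $\sN=\Linf(\hh\HH)$; as submitted, the proposal is a plan whose decisive steps are missing.
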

\begin{proof}
Since $\HH$ is an open quantum subgroup of $\GG$, then it is closed (see \cite{KKS}) thus  $\Linf(\hh\HH)\subset \Linf(\hh\GG)$ is $\hh\tau_{t}$-invariant. Furthermore the restriction of $\psi_{\hh\GG}$ to $\Linf(\hh\HH)$ is semifinite \cite[Corollary 3.4]{int}. Using Theorem \ref{thminterg} there exists a conditional expectation  $E:\Linf(\hh\GG)\to \Linf(\hh\GG)$ onto $\Linf(\hh\HH)$ such that $(\id\otimes E)\circ\Delta_{\hh\GG}=\Delta_{\hh\GG}\circ E$. Since $\Linf(\hh\HH)$ is preserved by $\Delta_{\hh\GG}$ we can proceed as in the proof of Theorem \ref{thminterg} to  get $(E\otimes \id)\circ\Delta_{\hh\GG}=\Delta_{\hh\GG}\circ E$. 

Conversely, suppose $\sN$ is a von Neumann subalgebra of $\Linf(\hh\GG)$ equipped with conditional expectation $E$ satisfying \eqref{E}. 
It is easy to see that in this case $\sN$ is an invariant subalgebra, i.e. $\Delta_{\hh\GG}(\sN)\subseteq \sN\vtens\sN$.  
The following relations show that the restriction of $\psi_{\hh\GG}$ and $\varphi_{\hh\GG}$ to $\Linf(\hh\HH)$ are semifinite,
\[\begin{split}
\psi_{\hh\GG}(E(x))=(\psi_{\hh\GG}\otimes\id)\Delta_{\hh\GG}(E(x))&=(\psi_{\hh\GG}\otimes\id)(\id\otimes E)\Delta_{\hh\GG}(x)=E(\I)\psi_{\hh\GG}(x)=\psi_{\hh\GG}(x),\\
\varphi_{\hh\GG}(E(x))=(\id\otimes\varphi_{\hh\GG})\Delta_{\hh\GG}(E(x))&=(\id\otimes\varphi_{\hh\GG})(E\otimes \id)\Delta_{\hh\GG}(x)=E(\I)\varphi_{\hh\GG}(x)=\varphi_{\hh\GG}(x).
\end{split}\] 
In particular  $ (\sN,\Delta_{\hh\GG}|_{\sN}, \varphi_{\hh\GG}|_{\sN}, \psi_{\hh\GG}|_{\sN})$ is a locally compact quantum group which we shall denote by $\hh\HH$. Using \cite[Theorem 7.5]{KKS} we see that  $\HH$ can be identified with an  open subgroup of $\GG$.
\end{proof}
The next corollary is the infinite-dimensional version of \cite[Theorem 3.2]{FS} and also the generalization of \cite[Theorem 4.1]{FS} and \cite{BBS}.
\begin{corollary}\label{11op} Let $\GG$ be a locally compact quantum group. There is a 1-1 correspondence between open quantum subgroups of $\GG$ and central idempotent states $\omega$ on $\hh\GG$, i.e. idempotent states $\omega\in\C^u_0(\hh\GG)^*$ such that 
$\omega\staru\mu = \mu\staru\omega
$ for all $\mu\in\C^u_0(\hh\GG)^*$. 
\end{corollary}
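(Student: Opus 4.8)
The plan is to assemble the correspondence from two bijections that are already in hand. Theorem~\ref{openthm1} identifies open quantum subgroups $\HH\subset\GG$ with pairs $(\sN,E)$, where $\sN\subset\Linf(\hh\GG)$ is a von Neumann subalgebra and $E$ is a conditional expectation onto $\sN$ satisfying the two-sided covariance \eqref{E}. Theorem~\ref{thminterg1} (and, for the converse direction, \cite[Theorem 5]{SaS}) identifies idempotent states $\omega\in S(\C_0^u(\hh\GG))$ with pairs $(\sN,E)$ in which $\sN$ is a coideal and $E$ a $\psi_{\hh\GG}$-preserving conditional expectation onto $\sN$ with $(\id\otimes E)\circ\Delta_{\hh\GG}=\Delta_{\hh\GG}\circ E$; here $E(x)=\omega\staru x$ and $\sN=\{x\in\Linf(\hh\GG):\omega\staru x=x\}$. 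Every pair of the first kind is of the second kind --- as is seen in the proof of Theorem~\ref{openthm1}, \eqref{E} forces $\sN$ to be a coideal and $E$ to preserve $\psi_{\hh\GG}$ --- so the corollary will follow once I show: for the idempotent state $\omega$ attached to such a pair, the remaining identity $(E\otimes\id)\circ\Delta_{\hh\GG}=\Delta_{\hh\GG}\circ E$ in \eqref{E} holds if and only if $\omega$ is central. The uniqueness clauses of Theorems~\ref{openthm1} and \ref{thminterg} then make the composite $\HH\leftrightarrow\omega$ a bijection in both directions, and the conditional expectation, being the canonical one determined by $\omega$, drops out of the final statement.

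For ``$\omega$ central $\Rightarrow$ \eqref{E}'' I would work at the universal level, using the universal version $E^u=(\id\otimes\omega)\circ\Delta_{\hh\GG}^u$ of $E$, which satisfies $\Lambda_{\hh\GG}\circ E^u=E\circ\Lambda_{\hh\GG}$. A short coassociativity computation yields $(E^u\otimes\id)\circ\Delta_{\hh\GG}^u=(\id\otimes G)\circ\Delta_{\hh\GG}^u$ and $\Delta_{\hh\GG}^u\circ E^u=(\id\otimes G')\circ\Delta_{\hh\GG}^u$, where $G=(\omega\otimes\id)\circ\Delta_{\hh\GG}^u$ and $G'=(\id\otimes\omega)\circ\Delta_{\hh\GG}^u$; slicing the first leg by the counit $\eps$ shows these two maps agree precisely when $G=G'$, and $G=G'$ paired against an arbitrary $\mu\in\C_0^u(\hh\GG)^*$ is exactly the relation $\omega\staru\mu=\mu\staru\omega$. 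So if $\omega$ is central then $(E^u\otimes\id)\circ\Delta_{\hh\GG}^u=\Delta_{\hh\GG}^u\circ E^u$; applying $\Lambda_{\hh\GG}\otimes\Lambda_{\hh\GG}$, using $(\Lambda_{\hh\GG}\otimes\Lambda_{\hh\GG})\circ\Delta_{\hh\GG}^u=\Delta_{\hh\GG}\circ\Lambda_{\hh\GG}$, the density of $\Lambda_{\hh\GG}(\C_0^u(\hh\GG))$ in $\Linf(\hh\GG)$ and the normality of the maps involved, this transports to $(E\otimes\id)\circ\Delta_{\hh\GG}=\Delta_{\hh\GG}\circ E$. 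Combined with the automatic identity $(\id\otimes E)\circ\Delta_{\hh\GG}=\Delta_{\hh\GG}\circ E$, this is \eqref{E}, and Theorem~\ref{openthm1} then yields the open quantum subgroup.

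The converse, ``\eqref{E} $\Rightarrow$ $\omega$ central'', is where I expect the main difficulty to sit. Slicing the second leg of $(E\otimes\id)\circ\Delta_{\hh\GG}=\Delta_{\hh\GG}\circ E$ by \emph{normal} functionals only gives $\omega\staru\nu=\nu\staru\omega$ for $\nu\in\Linf(\hh\GG)_*$, and when $\hh\GG$ is not coamenable this is genuinely weaker than centrality over all of $\C_0^u(\hh\GG)^*$: a reduced-level identity cannot by itself detect $\omega$ on functionals that do not factor through $\Lambda_{\hh\GG}$. To close this gap I would use the structure theory rather than argue directly. By Theorem~\ref{openthm1}, \eqref{E} already produces an open quantum subgroup $\HH\subset\GG$ with $\sN=\Linf(\hh\HH)$, and by \cite[Theorem 4.3]{KKS} such an $\HH$ is governed by a \emph{central} group-like projection $\I_\HH\in\Linf(\GG)$. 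I would then recover $\omega$ from $\I_\HH$ --- the idempotent state of $\hh\GG$ attached to the group-like projection $\I_\HH$ induces the same conditional expectation onto $\Linf(\hh\HH)$, hence equals $\omega$ by the uniqueness in Theorem~\ref{thminterg} --- and use that, for idempotent states arising from group-like projections, centrality of the projection in $\Linf(\GG)$ is equivalent to centrality of the state in $\C_0^u(\hh\GG)^*$. Since $\I_\HH$ is central, so is $\omega$. Composing the two bijections and eliminating the auxiliary expectation then gives the asserted one-to-one correspondence between open quantum subgroups of $\GG$ and central idempotent states on $\hh\GG$, with the centrality of $\omega$ mirroring the centrality of the group-like projection $\I_\HH$.
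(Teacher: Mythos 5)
Your overall assembly (Theorem \ref{openthm1} for open subgroups, Theorem \ref{thminterg}/\ref{thminterg1} for idempotent states, and the observation that the correspondence reduces to ``$\omega$ central $\Leftrightarrow$ the extra identity $(E\otimes\id)\circ\Delta_{\hh\GG}=\Delta_{\hh\GG}\circ E$'') matches the paper's structure, and your treatment of the easy implication (centrality $\Rightarrow$ \eqref{E}, via the universal computation and $\sigma$-weak density transport) is a legitimate filling-in of what the paper merely asserts. The genuine gap is in the converse. Having obtained $\HH$ from \eqref{E}, you reduce centrality of $\omega$ to the claim that, for an idempotent state attached to a group-like projection, centrality of $\I_\HH$ in $\Linf(\GG)$ is equivalent to centrality of $\omega$ in $\C_0^u(\hh\GG)^*$ --- but you give no argument for this, and it is exactly the reduced-versus-universal obstruction you yourself flagged: centrality of $\I_\HH$ is a statement at the level of $\Linf(\GG)$, so it does not obviously control $\omega$ on functionals that do not factor through the reducing morphism. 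Worse, inside this paper that equivalence (together with the identification $\I_\HH=(\omega\otimes\id)(\Ww^\GG)$, which is only quoted from the unpublished \cite{KKSS}) is the content of the Remark \emph{following} Corollary \ref{11op} and is derived from it; so as written your step is unproved, and relative to the paper it is circular. Even the preliminary identification of ``the'' idempotent state attached to $\I_\HH$ with your $\omega$ is not available from the results you cite.

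The paper closes the gap by a short self-contained argument that avoids group-like projections altogether: slicing \eqref{E} gives $\omega\staru\mu=\mu\staru\omega$ only for normal $\mu\in\Linf(\hh\GG)_*$, but since $\Linf(\hh\GG)_*$ is a two-sided ideal in $\C_0^u(\hh\GG)^*$ one gets, for every $\nu\in\C_0^u(\hh\GG)^*$ and normal $\mu$, the chain $\nu\staru\omega\staru\mu=\nu\staru\mu\staru\omega=\omega\staru\nu\staru\mu$, i.e. $\bigl((\nu\staru\omega-\omega\staru\nu)\otimes\Lambda_{\hh\GG}\bigr)\circ\Delta^u_{\hh\GG}=0$, and the Podle\'s condition for $\Delta^u_{\hh\GG}$ then forces $\nu\staru\omega=\omega\staru\nu$ for all $\nu$. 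If you insist on your projection route, it can in principle be repaired, but you would have to prove both $\I_\HH=(\omega\otimes\id)(\Ww^\GG)$ and the implication from centrality of the projection, e.g.\ via $(\Delta^u_{\hh\GG}\otimes\id)(\Ww^\GG)=\Ww^\GG_{13}\Ww^\GG_{23}$, which yields $(\omega\staru\nu\otimes\id)(\Ww^\GG)=\I_\HH\,(\nu\otimes\id)(\Ww^\GG)$ and $(\nu\staru\omega\otimes\id)(\Ww^\GG)=(\nu\otimes\id)(\Ww^\GG)\,\I_\HH$, combined with injectivity of $\theta\mapsto(\theta\otimes\id)(\Ww^\GG)$ (second-leg slices of $\Ww^\GG$ are norm-dense in $\C_0^u(\hh\GG)$); none of this is in your write-up, and it is not shorter than the paper's ideal-plus-Podle\'s trick.
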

\begin{proof}
If $\omega\in\C_0^u(\hh\GG)^*$ is an idempotent state satisfying $\omega\staru\mu = \mu\staru\omega
$ for all $\mu\in\C^u_0(\hh\GG)^*$ then the corresponding conditional expectation $E:\Linf(\hh\GG)\to \Linf(\hh\GG)$ satisfies \eqref{E}.

Conversely, let $\HH$ be an open quantum subgroup of $\GG$,  $\omega\in\C_0^u(\hh\GG)^*$ the corresponding idempotent state and $E$ the conditional expectation. Then $(\id\otimes E)\circ\Delta_{\hh\GG} = (E\otimes\id)\circ\Delta_{\hh\GG}$ implies $\omega\staru\mu = \mu\staru\omega$ for all $\mu\in\Linf(\hh\GG)_*$. Since $\Linf(\hh\GG)_*$ forms a two sided ideal in $\C_0^u(\hh\GG)^*$ we get 
\[\nu\staru\mu\staru\omega = \nu\staru\omega\staru\mu = \omega\staru\nu\staru\mu \] for all $\nu\in\C_0^u(\hh\GG)^*$ and $\mu\in\Linf(\hh\GG)_*$. Thus we conclude that 
\[(\nu\staru\omega\otimes\Lambda_{\hh\GG})\circ\Delta_{\hh\GG}^u = (\omega\staru\nu\otimes\Lambda_{\hh\GG})\circ\Delta_{\hh\GG}^u.\] Using Podle\'s condition for $\Delta_{\hh\GG}^u$ we conclude that $\nu\staru\omega = \omega\staru\nu$ for all $\nu\in \C_0^u(\hh\GG)^*$. 
\end{proof}
\begin{remark}
Let $\HH\subset\GG$  be an open quantum subgroup,  $\I_\HH$ the corresponding central  group-like   projection as explained  in the last paragraph of Section \ref{Prel}   and let  $\omega\in S(\C_0^u(\hh\GG))$ be the  corresponding idempotent state. It is observed in \cite{KKSS} that $\I_\HH = (\omega\otimes\id)(\Ww^\GG)$. Thus using Corollary \ref{11op} and the results of \cite{KKS} we see that a central  projection $P\in\Linf(\GG)$ is a group-like projection if and only if there exists a central idempotent state $\omega\in\C_0^u(\hh\GG)^*$ such that $P = (\omega\otimes\id)(\Ww^\GG)$. 
\end{remark}

\section{Normal coideals assigned to open quantum subgroups}\label{opencoid}
In this section we  characterize normal coideals $\sN\subset\Linf(\GG)$  corresponding to  open quantum subgroups. Roughly speaking $\sN = \Linf(\GG/\HH)$ for $\HH$ open in $\GG$ if and only if $\sN$ admits an atom.  
\begin{theorem}\label{minimal1}
Let $\sN\subset \Linf(\GG)$ be a normal coideal  
  admitting  a  minimal  projection $P\in\sN$ which is central $P\in Z(\sN)$.  Suppose that 
 \begin{equation}\label{assum5}\I\in\big\{(\id\otimes \omega)((\I\otimes P){\ww^\GG}^* (\I\otimes P))\st\omega\in\B(\Ltwo(\GG))_*\big\}^{\textrm{\tiny{$\sigma$-weak cls}}}.
 \end{equation}
  Then there exists an open quantum subgroup $\mathbb{H}\subset \GG$  such that $\sN = \Linf(\GG/\HH)$.  Conversely, if $\HH\subset\GG$ is an open quantum subgroup given by $\pi:\Linf(\GG)\to\Linf(\HH)$ then the central support $\I_\HH$ of $\pi$ is a minimal central projection in $\Linf(\GG/\HH)$ satisfying \eqref{assum5}.  
\end{theorem}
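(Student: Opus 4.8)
The plan is to prove the two implications by passing to the codual $\dd\sN=\sN'\cap\Linf(\hh\GG)$ and identifying, under the stated hypotheses, $\dd\sN$ with $\Linf(\hh\HH)$ for an open quantum subgroup $\HH\subset\GG$; the conclusion $\sN=\Linf(\GG/\HH)$ then follows from $\dd{\dd\sN}=\sN$ and from the fact that the codual of $\Linf(\hh\HH)$ inside $\Linf(\GG)$ is $\Linf(\GG/\HH)$ (see \cite{Vaes,embed}). A preliminary observation I would record is that a coideal $\sN\subset\Linf(\GG)$ is normal if and only if $\dd\sN$ is an invariant subalgebra of $\Linf(\hh\GG)$, i.e.\ $\Delta_{\hh\GG}(\dd\sN)\subseteq\dd\sN\vtens\dd\sN$. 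Indeed, for $y\in\dd\sN$ one has $\flip\bigl(\Delta_{\hh\GG}(y)\bigr)={\ww^\GG}^*(\I\otimes y)\ww^\GG$, an element of $\Linf(\hh\GG)\vtens\Linf(\hh\GG)$; since $y\in\sN'$, the operator $\I\otimes y$ commutes with $\ww^\GG(\I\otimes x){\ww^\GG}^*\in\Linf(\hh\GG)\vtens\sN$ (normality) and with $\ww^\GG(x\otimes\I){\ww^\GG}^*=\Delta_\GG(x)\in\Linf(\GG)\vtens\sN$ (coideal property) for every $x\in\sN$, and conjugating these commutations by $\ww^\GG$ shows that ${\ww^\GG}^*(\I\otimes y)\ww^\GG$ commutes with $\I\otimes\sN$ and with $\sN\otimes\I$; slicing each leg then puts it in $(\Linf(\hh\GG)\cap\sN')\vtens(\Linf(\hh\GG)\cap\sN')=\dd\sN\vtens\dd\sN$. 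The converse is the symmetric computation with $\GG$ and $\hh\GG$ exchanged; in particular, for $\HH$ open the algebra $\Linf(\hh\HH)$ is a Baaj-Vaes, hence invariant, subalgebra of $\Linf(\hh\GG)$, so $\Linf(\GG/\HH)=\dd{\Linf(\hh\HH)}$ is genuinely a normal coideal, which makes the hypotheses of the theorem meaningful.

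\emph{Sufficiency.} Let $\sN$ be normal with a minimal central projection $P$ satisfying \eqref{assum5}; by the above, $\dd\sN$ is an invariant subalgebra of $\Linf(\hh\GG)$. The key step is to show that $\dd\sN$ is moreover an integrable coideal which is invariant under the scaling group $\hh\tau_{t}$ of $\hh\GG$. To this end I would rewrite \eqref{assum5}, using $\ww^{\hh\GG}=\flip({\ww^\GG}^*)$, as
\[\I\in\bigl\{(\omega\otimes\id)\bigl((P\otimes\I)\ww^{\hh\GG}(P\otimes\I)\bigr)\st\omega\in\B(\Ltwo(\GG))_*\bigr\}^{\textrm{\tiny{$\sigma$-weak cls}}},\]
interpret $(P\otimes\I)\ww^{\hh\GG}(P\otimes\I)$ as the corner of the multiplicative unitary of $\hh\GG$ cut down by the atom $P$, and combine this fullness with the minimality of $P$ to deduce that $\psi_{\hh\GG}|_{\dd\sN}$ is semifinite and that the $\Ltwo(\GG)$-projection associated with the (unique $\psi_{\hh\GG}$-preserving) conditional expectation onto $\dd\sN$ commutes with the canonical implementation of $\Delta_{\hh\GG}$, whence $\hh\tau_{t}(\dd\sN)=\dd\sN$. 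Granting this, Theorem \ref{thminterg} applied to $\hh\GG$ provides a normal conditional expectation $E\colon\Linf(\hh\GG)\to\dd\sN$ with $(\id\otimes E)\circ\Delta_{\hh\GG}=\Delta_{\hh\GG}\circ E$, and since $\dd\sN$ is $\Delta_{\hh\GG}$-invariant the argument in the proof of that theorem (used again in the proof of Theorem \ref{openthm1}) also gives $(E\otimes\id)\circ\Delta_{\hh\GG}=\Delta_{\hh\GG}\circ E$. Theorem \ref{openthm1} then yields a unique open quantum subgroup $\HH\subset\GG$ with $\dd\sN=\Linf(\hh\HH)$, and therefore $\sN=\dd{\dd\sN}=\Linf(\GG/\HH)$.

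\emph{Necessity.} Conversely, let $\HH\subset\GG$ be open, with $\rho\colon\Linf(\GG)\to\Linf(\HH)$ surjective and $\I_\HH\in Z(\Linf(\GG))$ the associated central group-like projection, so that $\rho(\I_\HH)=\I$ and $\rho$ restricts to an isomorphism $\I_\HH\Linf(\GG)\cong\Linf(\HH)$. Using $\Delta_\GG(\I_\HH)(\I\otimes\I_\HH)=\I_\HH\otimes\I_\HH$ and the description $\Linf(\GG/\HH)=\{x\in\Linf(\GG):\Delta_\GG(x)(\I\otimes\I_\HH)=x\otimes\I_\HH\}$ valid for open subgroups, one gets $\I_\HH\in\Linf(\GG/\HH)$, and that $x\I_\HH$ is, for $x\in\Linf(\GG/\HH)$, the preimage of $\rho(x)$ under this isomorphism; as the restriction of the $\HH$-action to $\I_\HH\Linf(\GG)$ is, through $\rho$, the (ergodic) translation action $\Delta_\HH$, it follows that $\rho(x)$ is scalar, i.e.\ $\I_\HH\Linf(\GG/\HH)=\CC\I_\HH$, so $\I_\HH$ is a minimal projection of $\Linf(\GG/\HH)$, in particular minimal among its central projections. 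Finally, the same rewriting turns \eqref{assum5} for $P=\I_\HH$ into the assertion that $\I$ lies in the $\sigma$-weak closed span of the first-leg slices of $(\I_\HH\otimes\I)\ww^{\hh\GG}(\I_\HH\otimes\I)$; since $\I_\HH$ is central it commutes with $\ww^{\hh\GG}$, so this equals $(\I_\HH\otimes\I)\ww^{\hh\GG}$, which under the above identifications is carried to the multiplicative unitary $\ww^{\hh\HH}$ of $\hh\HH$ composed with the unital inclusion $\Linf(\hh\HH)\hookrightarrow\Linf(\hh\GG)$; the first-leg slices of $\ww^{\hh\HH}$ are $\sigma$-weakly dense in $\Linf(\hh\HH)\ni\I$, and \eqref{assum5} follows.

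\emph{Main difficulty.} The hard part is the middle step of the sufficiency: extracting from the single density condition \eqref{assum5}, together with minimality of $P$, both semifiniteness of $\psi_{\hh\GG}$ on $\dd\sN$ and $\hh\tau_{t}$-invariance of $\dd\sN$. Everything else is codual bookkeeping together with applications of Theorems \ref{thminterg} and \ref{openthm1} and of the known correspondence between central group-like projections and open quantum subgroups.
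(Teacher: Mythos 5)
Your necessity argument is essentially the paper's (minimality of $\I_\HH$ plus the identification of $(\I_\HH\otimes\I)\ww^{\hh\GG}$ with the bicharacter of $\HH$), but the sufficiency direction has a genuine gap precisely at the step you flag as the ``main difficulty'': you never prove that $\dd{\sN}$ is integrable and $\hh\tau_t$-invariant. ``Interpret $(P\otimes\I)\ww^{\hh\GG}(P\otimes\I)$ as a corner cut down by the atom and combine this fullness with minimality'' is a declaration of intent, not an argument, and it is where the entire content of the theorem sits: the only natural way to see that $\psi_{\hh\GG}$ is semifinite on $\dd{\sN}$ and that $\hh\tau_t(\dd{\sN})=\dd{\sN}$ is to already know $\dd{\sN}=\Linf(\hh\HH)$ for an open $\HH$, which is what you are trying to prove. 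There is also an internal ordering problem in your sketch: the $\psi_{\hh\GG}$-preserving conditional expectation onto $\dd{\sN}$ (and hence its $\Ltwo$-implementation) is produced, via Takesaki's theorem as in Theorem \ref{thminterg}, only \emph{after} one knows invariance of $\dd{\sN}$ under the relevant modular/scaling groups, so you cannot use that projection to derive the scaling invariance you still owe.

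The paper's proof does not pass to the codual at all and is much more direct: normality of $\sN$ together with minimality and centrality of $P$ in $\sN$ gives an $x\in\Linf(\hh\GG)$ with $\ww^\GG(\I\otimes P){\ww^\GG}^*(\I\otimes P)=x\otimes P$; condition \eqref{assum5} then forces $x=\I$, i.e. $(\I\otimes P){\ww^\GG}^*(\I\otimes P)={\ww^\GG}^*(\I\otimes P)$, and slicing the first leg shows $P$ is central in $\Linf(\GG)$. A second application of minimality, now to $\Delta_\GG(n)(\I\otimes P)$ for $n\in\sN$, combined again with \eqref{assum5}, yields $\Delta_\GG(n)(\I\otimes P)=n\otimes P$ for all $n\in\sN$; in particular $P$ is a central group-like projection, hence $P=\I_\HH$ for an open quantum subgroup $\HH\subset\GG$, and then $\sN\subseteq\Linf(\GG/\HH)$ from the displayed relation while $\Linf(\GG/\HH)\subseteq\sN$ follows from \cite[Theorem 3.3]{KKS}. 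If you want to salvage your codual strategy, you would in any case have to carry out this computation (or an equivalent one) first to produce the group-like projection; once you have it, the detour through Theorems \ref{thminterg} and \ref{openthm1} becomes unnecessary.
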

\begin{proof}
 Since $P$ is minimal and central,  there exists $x\in \Linf(\hh\GG)$ such that\[{\ww^\GG}(\I\otimes P){\ww^\GG}^*(\I\otimes P) = x\otimes P\] Thus 
 \begin{equation}\label{nor1}
 (\I\otimes P){\ww^\GG}^*(\I\otimes P) = (\I\otimes P){\ww^\GG}^*(\I\otimes P)(x\otimes\I).
 \end{equation} Applying $(\id\otimes \omega)$ to \eqref{nor1},  we conclude that
 \[
(\id\otimes\omega)((\I\otimes P){\ww^\GG}^*(\I\otimes P) )=(\id\otimes\omega)((\I\otimes P){\ww^\GG}^*(\I\otimes P) )x.
 \] 
Thus  \eqref{assum5} yields $x = \I$, i.e. 
\begin{equation}\label{comrelp}
(\I\otimes P){\ww^\GG}^* (\I\otimes P) ={\ww^\GG}^*(\I\otimes P).
\end{equation}  Let $\omega\in\B(\Ltwo(\GG))_*$. Slicing \eqref{comrelp}  with $(\omega\otimes\id)$ we get $PaP = aP$ for all $a\in\Linf(\GG)$. In particular, $a^*P = Pa^*P = (PaP)^* = (aP)^* =  Pa^*$, i.e. $P$ is central. 

Using minimality and centrality  of $P$ again, we see that for all $x\in\sN$    there exists $y\in \Linf(\mathbb{G})$ such that \[\Delta_\GG(x)(1\otimes P) = y\otimes P\] i.e. ${\ww^\GG}(x\otimes \I){\ww^\GG}^*(\I\otimes P) = y\otimes P$ which implies 
\[(x\otimes \I){\ww^\GG}^*(\I\otimes P)={\ww^\GG}^*(\I\otimes P)(y\otimes \I).\] This in turn implies that
\[(x\otimes \I)(\I\otimes P){\ww^\GG}^*(\I\otimes P)=(\I\otimes P){\ww^\GG}^*(\I\otimes P)(y\otimes \I).\]
Slicing   with $(\id\otimes\omega)$ and using \eqref{assum5} we get $x = y$.
Thus 
\begin{equation}\label{grl2}
\Delta_\GG(x)(1\otimes P) = x\otimes P
\end{equation} 
for all $x\in \sN$.
In particular, $P\in\sN$ is a group-like projection. Let $\mathbb{H} $  be  an  open quantum subgroup of $\GG$ assigned to $P$, i.e. $P = \I_\HH$.  Using \eqref{grl2} we see that $\sN\subset \Linf(\GG/\HH )$. Using \cite[Theorem 3.3]{KKS}  we get the converse containment $\Linf(\GG/\HH )\subset \sN$. Thus $\sN= \Linf(\GG/\HH )$.
 
 For the converse we use \cite[Proposition 3.2]{KKS} which yields minimality of $\I_\HH\in\Linf(\GG/\HH)$. In order to see that $\I_\HH$ satisfies \eqref{assum5} we note that, under the identification $\Ltwo(\HH)=\I_\HH\Ltwo(\GG)$,  the bicharacter $V\in\Linf(\hh\GG)\vtens\Linf(\HH)$ is  identified with $\ww^\GG(\I\otimes\I_\HH)$. In particular  
 \[\I\in\Linf(\hh\HH) = 
 \big\{(\id\otimes \omega)((\I\otimes P){\ww^\GG}^* (\I\otimes P))\st\omega\in\B(\Ltwo(\GG))_*\big\}^{\textrm{\tiny{$\sigma$-weak cls}}}.\] 
\end{proof}
Let us give  another characterizations of normal coideals of  $\sN\subset \Linf(\GG)$ corresponding to open  quantum subgroups. In order to formulate it we shall  denote $\Delta_\GG|_\sN = \alpha$ and $\beta:\sN\to\Linf(\hh\GG)\vtens \sN$ where $\beta(x)=\ww^\GG(\I\otimes x){\ww^\GG}^*$.
\begin{theorem}
Let $\sN\subset \Linf(\GG)$ be a normal coideal. If $\sN$ admits a normal $*$-homomorphism $\varepsilon:\sN\to\mathbb{C}$ such that 
\[\begin{split}(\id\otimes\varepsilon)\circ\beta &=\I\cdot \varepsilon\\
(\id\otimes\varepsilon)\circ\alpha&= \id_\sN
\end{split}\] then there exists an open quantum subgroup $\HH\subset\GG$ such that $\sN = \Linf(\GG/\HH)$. Conversely, if $\HH\subset\GG$ is open then  $\Linf(\GG/\HH)$ admits   a normal $*$-homomorphism $\varepsilon:\sN\to\mathbb{C}$ satisfying above conditions.
\end{theorem}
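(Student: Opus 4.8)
The plan is to reduce the theorem to Theorem~\ref{minimal1} by passing from the homomorphism $\varepsilon$ to its support projection. First I would set $P\in\sN$ to be the support projection of $\varepsilon$. Since $(\id\otimes\varepsilon)\circ\alpha=\id_\sN$ and $\alpha(\I)=\I\otimes\I$, one gets $\varepsilon(\I)=1$, so $\varepsilon$ is nonzero and unital; hence $\ker\varepsilon$ is a $\sigma$-weakly closed two-sided ideal of $\sN$ of codimension one, $P$ is central in $\sN$, and $\sN P=\CC P$. In particular $P$ is a minimal projection of $\sN$ which is central in $\sN$, and $xP=\varepsilon(x)P$ for every $x\in\sN$.

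The next step is to rewrite the two covariance conditions in terms of $P$. Using that $(\id\otimes\varepsilon)(y)\otimes P=y(\I\otimes P)$ for every $y\in\Linf(\GG)\vtens\sN$, the condition $(\id\otimes\varepsilon)\circ\alpha=\id_\sN$ reads $\Delta_\GG(x)(\I\otimes P)=x\otimes P$ for all $x\in\sN$, which is exactly relation \eqref{grl2} from the proof of Theorem~\ref{minimal1} (and, with $x=P$, says that $P$ is group-like). Similarly, since $(\id\otimes\varepsilon)(\beta(x))\otimes P=\beta(x)(\I\otimes P)$, the condition $(\id\otimes\varepsilon)\circ\beta=\I\cdot\varepsilon$ reads $\beta(x)(\I\otimes P)=\varepsilon(x)\,\I\otimes P$ for $x\in\sN$; at $x=P$ this is $\ww^\GG(\I\otimes P){\ww^\GG}^*(\I\otimes P)=\I\otimes P$, and multiplying on the left by ${\ww^\GG}^*$ gives $(\I\otimes P){\ww^\GG}^*(\I\otimes P)={\ww^\GG}^*(\I\otimes P)$, which is relation \eqref{comrelp}. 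From this point the proof of Theorem~\ref{minimal1} applies verbatim: slicing \eqref{comrelp} in the first leg gives $PaP=aP$ for all $a\in\Linf(\GG)$, hence $P$ is central in $\Linf(\GG)$; being a central group-like projection, $P$ determines by \cite[Theorem~4.3]{KKS} an open quantum subgroup $\HH\subset\GG$ with $\I_\HH=P$; and \eqref{grl2} together with \cite[Theorem~3.3]{KKS} yields $\sN=\Linf(\GG/\HH)$ and the uniqueness of $\HH$. Note that, in contrast to Theorem~\ref{minimal1}, hypothesis \eqref{assum5} is not invoked here, since the group-like relation on all of $\sN$ comes for free from the $\alpha$-covariance of $\varepsilon$.

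For the converse, let $\HH\subset\GG$ be open with central group-like projection $\I_\HH$. By the converse direction of Theorem~\ref{minimal1} (and \cite{KKS}) $\Linf(\GG/\HH)$ is a normal coideal in which $\I_\HH$ is a minimal central projection, so the compression $\varepsilon(x)\I_\HH:=x\I_\HH$ defines a normal $*$-homomorphism $\varepsilon\colon\Linf(\GG/\HH)\to\CC$. After tensoring with $\I_\HH$, the relation $(\id\otimes\varepsilon)\circ\alpha=\id$ amounts to $\Delta_\GG(x)(\I\otimes\I_\HH)=x\otimes\I_\HH$ for $x\in\Linf(\GG/\HH)$, which is the characterisation of $\Linf(\GG/\HH)$ in \cite[Theorem~3.3]{KKS}. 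For the $\beta$-relation the point is that $\I\otimes\I_\HH$ lies in the centre of $\Linf(\hh\GG)\vtens\Linf(\GG)$ (because $\I_\HH\in Z(\Linf(\GG))$) and hence commutes with $\ww^\GG$; therefore, for $x\in\Linf(\GG/\HH)$,
\[
\ww^\GG(\I\otimes x){\ww^\GG}^*(\I\otimes\I_\HH)=\ww^\GG(\I\otimes x\I_\HH){\ww^\GG}^*=\varepsilon(x)\,\ww^\GG(\I\otimes\I_\HH){\ww^\GG}^*=\varepsilon(x)\,(\I\otimes\I_\HH),
\]
and since $\beta(x)\in\Linf(\hh\GG)\vtens\Linf(\GG/\HH)$ by normality of $\Linf(\GG/\HH)$, the left-hand side equals $(\id\otimes\varepsilon)(\beta(x))\otimes\I_\HH$; comparing the two gives $(\id\otimes\varepsilon)\circ\beta=\I\cdot\varepsilon$. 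I do not expect a genuine obstacle: the forward direction is essentially bookkeeping that feeds into the already-proved Theorem~\ref{minimal1}, and the only point requiring care is ensuring, via \cite{KKS}, that $\Linf(\GG/\HH)$ is normal so that $\beta$ really restricts to an action on it in the converse.
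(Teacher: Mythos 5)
Your proof is correct and follows essentially the same route as the paper: you pass to the support projection $P$ of $\varepsilon$, use the two covariance conditions to obtain $\ww^\GG(\I\otimes P){\ww^\GG}^*(\I\otimes P)=\I\otimes P$ and $\Delta_\GG(x)(\I\otimes P)=x\otimes P$ for $x\in\sN$, deduce that $P$ is a central group-like projection and conclude via the argument of Theorem \ref{minimal1}, while the converse is the paper's restriction of $\pi$ to $\Linf(\GG/\HH)$ (your compression by $\I_\HH$), only spelled out in more detail.
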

\begin{proof}
Let $P$ be the support of $\varepsilon$. Then $(\id\otimes\varepsilon)\circ\beta  = \I\cdot \varepsilon$ implies 
\[\ww^\GG(\I\otimes P){\ww^\GG}^*(\I\otimes P) = (\I\otimes P)\] which then implies that $P$ is central. Similarly  $(\id\otimes\varepsilon)\circ\alpha = \id_\sN$ yields 
\[\Delta_\GG(x)(\I\otimes P) = x\otimes P\] for all $x\in \sN$. In particular $P$ is a group-like projection corresponding to an open subgroup $\HH\subset \GG$. Proceeding as in the proof of Theorem \ref{minimal1} we get the identification $\sN =\Linf(\GG/\HH)$. For the converse we consider the  restriction of $\pi:\Linf(\GG)\to\Linf(\HH)$ to $\Linf(\GG/\HH)$ which  yields the required $\varepsilon$ via  $\pi(x) = \varepsilon(x)\I$ for all $x\in\Linf(\GG/\HH)$. 
\end{proof}

\section{Appendix}
In this section we will show  that a Woronowicz-closed quantum subgroup
$\HH$  of a locally compact quantum group $\GG$ has  the Haagerup property if $\GG$ has it. Assuming  coamenability of $\GG$ the result  was proved in \cite[Proposition 5.8]{DFSW}. 

Let   $\U\in\M(\C_0(\GG)\otimes\mathcal{K}(H))$ be a  unitary. We say that $\U$ is a unitary   representation of a quantum group $\GG$ on $H$ if $(\Delta_\GG\otimes\id)(\U)=\U_{13}\U_{23}$. A unitary representation $\U$ of $\GG$  admits  a unique unitary lift  $\Uu\in\M(\C^u_0(\GG)\otimes\mathcal{K}(H))$ such that \[
(\Delta_\GG^u\otimes\id)(\Uu) = \Uu_{13}\Uu_{23}
\]
satisfying $(\Lambda_\GG \otimes \id)(\Uu)=\U$. 
\begin{definition}
A unitary representation $\U\in\M(\C_0(\GG)\otimes\mathcal{K}(H))$ of $\GG$ on a Hilbert space $H$ is mixing if for all $\xi,\eta\in H$, $(\id\otimes\omega_{\xi,\eta})(\U)\in\C_0(\GG)$.
\end{definition}

\begin{lemma}\label{mix}
Let $\U\in\M(\C_0(\GG)\otimes\mathcal{K}(H))$ be a mixing representation of $\GG$. Then $\Uu\in\M(\C^u_0(\GG)\otimes\mathcal{K}(H))$ is mixing, i.e.
\begin{equation}\label{mixing1}
\big\{(\id\otimes\omega)\Uu:\omega\in B(H)_*\big\}\subset\C_0^u(\GG).
\end{equation}
\end{lemma}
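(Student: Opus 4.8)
The plan is to reduce everything to the rank-one coefficients $(\id\otimes\omega_{\xi,\eta})(\Uu)$, $\xi,\eta\in H$: once these are known to lie in $\C_0^u(\GG)$, an arbitrary $\omega\in B(H)_*$ is handled by writing $\omega=\sum_n\lambda_n\omega_{\xi_n,\eta_n}$ with $\sum_n|\lambda_n|<\infty$, so that $(\id\otimes\omega)(\Uu)=\sum_n\lambda_n(\id\otimes\omega_{\xi_n,\eta_n})(\Uu)$ is a norm-convergent series in the closed subspace $\C_0^u(\GG)$.

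First I would record an identity relating $\Uu$ and $\U$. Applying $\Lambda_\GG\otimes\id\otimes\id$ to $(\Delta^u_\GG\otimes\id)(\Uu)=\Uu_{13}\Uu_{23}$ and using $(\Lambda_\GG\otimes\id)\circ\Delta^u_\GG=\Delta_r^{r,u}\circ\Lambda_\GG$ from \eqref{LDDrL} together with $(\Lambda_\GG\otimes\id)(\Uu)=\U$ gives
\[
(\Delta_r^{r,u}\otimes\id)(\U)=\U_{13}\,\Uu_{23}\qquad\text{in }\M(\C_0(\GG)\otimes\C_0^u(\GG)\otimes\mathcal{K}(H)).
\]
For a state $\mu\in S(\C_0(\GG))$ put $T_\mu=(\mu\otimes\id)\circ\Delta_r^{r,u}\colon\C_0(\GG)\to\M(\C_0^u(\GG))$ and $S_\mu=(\mu\otimes\id)(\U)\in B(H)$. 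Since the first leg of this identity occurs only in the factor $\U_{13}$ on the right, slicing that leg by $\mu$ yields $(T_\mu\otimes\id)(\U)=(\I\otimes S_\mu)\Uu$; slicing now the last leg by $\omega_{\xi,\eta}$ and using $\omega_{\xi,\eta}(S_\mu T)=\omega_{S_\mu^*\xi,\eta}(T)$ gives
\[
(\id\otimes\omega_{S_\mu^*\xi,\eta})(\Uu)=T_\mu\big((\id\otimes\omega_{\xi,\eta})(\U)\big),\qquad\mu\in S(\C_0(\GG)),\ \xi,\eta\in H.
\]

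Next I would show that $T_\mu$ actually maps $\C_0(\GG)$ into $\C_0^u(\GG)$, not merely into $\M(\C_0^u(\GG))$. Factoring $\mu=\nu(a\,\cdot\,)$ with $a\in\C_0(\GG)$, $\nu\in\C_0(\GG)^*$ (Cohen factorisation) we get $T_\mu(x)=(\nu\otimes\id)\big((a\otimes\I)\Delta_r^{r,u}(x)\big)$, and the Podle\'s condition $\Delta_r^{r,u}(\C_0(\GG))(\C_0(\GG)\otimes\I)=\C_0(\GG)\otimes\C_0^u(\GG)$, equivalently after taking adjoints $(\C_0(\GG)\otimes\I)\Delta_r^{r,u}(\C_0(\GG))=\C_0(\GG)\otimes\C_0^u(\GG)$, forces $(a\otimes\I)\Delta_r^{r,u}(x)\in\C_0(\GG)\otimes\C_0^u(\GG)$, whose first-leg slices lie in $\C_0^u(\GG)$. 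As $\U$ is mixing, $(\id\otimes\omega_{\xi,\eta})(\U)\in\C_0(\GG)$, so the displayed identity shows $(\id\otimes\omega_{S_\mu^*\xi,\eta})(\Uu)\in\C_0^u(\GG)$ for all states $\mu$ and all $\xi,\eta$.

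Finally I would argue that the vectors $S_\mu^*\xi$, $\mu\in S(\C_0(\GG))$, $\xi\in H$, are total in $H$. If $\zeta\perp S_\mu^*\xi$ for all such $\mu,\xi$, then $S_\mu\zeta=(\mu\otimes\id)(\U)\zeta=0$ for every state $\mu$, hence for every $\mu\in\C_0(\GG)^*$; representing $\C_0(\GG)$ faithfully and nondegenerately on a Hilbert space $\mathcal{H}$ and viewing $\U$ as a unitary on $\mathcal{H}\otimes H$, this yields $\langle\alpha\otimes\beta,\U(\alpha'\otimes\zeta)\rangle=0$ for all $\alpha,\alpha'\in\mathcal{H}$, $\beta\in H$, whence $\U(\alpha'\otimes\zeta)=0$ and therefore $\zeta=0$. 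Since $\zeta\mapsto(\id\otimes\omega_{\zeta,\eta})(\Uu)$ is norm continuous and $\C_0^u(\GG)$ is norm closed, $(\id\otimes\omega_{\zeta,\eta})(\Uu)\in\C_0^u(\GG)$ for all $\zeta,\eta\in H$, which with the reduction of the first paragraph proves the lemma. The steps requiring the most care are the use of the Podle\'s condition for $\Delta_r^{r,u}$, which is exactly what keeps $T_\mu$ valued in $\C_0^u(\GG)$, and this totality argument, which replaces a naive expansion of $(\id\otimes\omega_{\xi,\eta})(\Uu)$ over an orthonormal basis of $H$ that need not converge in norm; note that no regularity or coamenability assumption on $\GG$ is used.
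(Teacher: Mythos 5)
Your proof is correct and follows essentially the same route as the paper: both rest on the identity $(\Delta_r^{r,u}\otimes\id)(\U)=\U_{13}\Uu_{23}$ and on Podle\'s condition for $\Delta_r^{r,u}$ to force the sliced elements into $\C_0^u(\GG)$. The only cosmetic difference is the final density step: the paper invokes the fact that $\bigl\{(\mu\otimes\id)\U\st\mu\in B(\Ltwo(\GG))_*\bigr\}^{\text{\tiny{norm-cls}}}$ is a $\C^*$-algebra acting non-degenerately on $H$ (so the functionals $\omega\cdot a$ exhaust $B(H)_*$), whereas you prove the needed totality of the vectors $S_\mu^*\xi$ directly from unitarity of $\U$, which amounts to the same thing.
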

\begin{proof}
Note that we have
$(\Delta_r^{r,u}\otimes \id)(\U)=\U_{13}\Uu_{23}$ and for every $\omega\in B(H)_*$,
\begin{equation}\label{mix3}\Delta_r^{r,u}((\id\otimes \omega)\U) =(\id\otimes\id\otimes\omega)(\U_{13}\Uu_{23})\end{equation} Since $(\id\otimes \omega)(\U)\in\C_0(\GG)$ for all $\omega\in\B(H)_*$ we can  use Podle\'s condition and get \[(\mu\otimes\id)(\Delta_r^{r,u}((\id\otimes \omega)\U))\in\C_0^u(\GG)\] for all $\mu\in B(\Ltwo(\GG))_*$. On the other hand  using \eqref{mix3} we get 
\begin{equation}\label{mix4}(\mu\otimes\id)(\Delta_r^{r,u}((\id\otimes \omega)\U))=(\id\otimes \omega\cdot a)(\Uu)\in\C^u_0(\GG)\end{equation}
where  $a=(\mu\otimes\id)(\U)$.   Since   $\bigl\{(\mu\otimes\id)\U\st \mu\in B(\Ltwo(\GG))_*\bigr\}^{\textrm{\tiny{norm-cls}}}$ forms a $\C^*$-algebra  acting non-degenerately on $H$  we conclude \eqref{mixing1} from \eqref{mix4}. 
\end{proof}
Using Lemma \ref{mix} and \cite[Remark 5.9]{DFSW} we get 
\begin{corollary}
Let $\HH$ be a Woronowicz-closed quantum subgroup of $\GG$. If $\GG$ has the Haagerup property. Then $\HH$ has the Haagerup property.
\end{corollary}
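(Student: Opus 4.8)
The plan is to reproduce the argument of \cite[Proposition 5.8]{DFSW} for the coamenable case, letting Lemma \ref{mix} play the role that coamenability of $\GG$ played there. Recall that a locally compact quantum group has the Haagerup property exactly when it admits a mixing unitary representation weakly containing the trivial representation (equivalently, with almost invariant vectors). So I would start from such a representation $\U\in\M(\C_0(\GG)\otimes\mathcal{K}(H))$ of $\GG$, let $\pi\in\Mor(\C_0^u(\GG),\C_0^u(\HH))$ be the surjective morphism with $(\pi\otimes\pi)\circ\Delta_\GG^u=\Delta_\HH^u\circ\pi$ defining $\HH$ as a Woronowicz-closed quantum subgroup, and let $\hh\pi\in\Mor(\C_0^u(\hh\HH),\C_0^u(\hh\GG))$ be the dual homomorphism, characterised by $(\id\otimes\pi)(\WW^\GG)=(\hh\pi\otimes\id)(\WW^\HH)$.

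Next I would push $\U$ down to $\HH$. Lifting $\U$ to its universal version $\Uu\in\M(\C_0^u(\GG)\otimes\mathcal{K}(H))$ and setting $\Uu^\HH=(\pi\otimes\id)(\Uu)$, the relation $(\pi\otimes\pi)\circ\Delta_\GG^u=\Delta_\HH^u\circ\pi$ gives $(\Delta_\HH^u\otimes\id)(\Uu^\HH)=\Uu^\HH_{13}\Uu^\HH_{23}$, so $\Uu^\HH$ is the universal version of a unitary representation $\U^\HH=(\Lambda_\HH\otimes\id)(\Uu^\HH)$ of $\HH$ on $H$; equivalently, $\U^\HH$ is the representation of $\HH$ attached to the non-degenerate $*$-representation $\rho_\U\circ\hh\pi\in\Mor(\C_0^u(\hh\HH),\mathcal{K}(H))$, where $\rho_\U$ is the representation of $\C_0^u(\hh\GG)$ corresponding to $\U$. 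Two properties of $\U^\HH$ then remain to be verified. For mixing-ness: Lemma \ref{mix} gives that $\Uu$ is mixing, i.e. $(\id\otimes\omega)(\Uu)\in\C_0^u(\GG)$ for every $\omega\in B(H)_*$; since $(\id\otimes\omega)(\U^\HH)=(\Lambda_\HH\circ\pi)\big((\id\otimes\omega)(\Uu)\big)$ and $\pi(\C_0^u(\GG))=\C_0^u(\HH)$, this element lies in $\C_0(\HH)$, so $\U^\HH$ is mixing. For weak containment of the trivial representation: slicing the first leg of $(\id\otimes\pi)(\WW^\GG)=(\hh\pi\otimes\id)(\WW^\HH)$ by the counit of $\hh\GG$ yields $\varepsilon_{\hh\GG}\circ\hh\pi=\varepsilon_{\hh\HH}$, and since the trivial representation of $\GG$ (resp. $\HH$) corresponds to $\varepsilon_{\hh\GG}$ (resp. $\varepsilon_{\hh\HH}$), weak containment transports along $\hh\pi$: from $|\varepsilon_{\hh\GG}(a)|\le\|\rho_\U(a)\|$ one obtains $|\varepsilon_{\hh\HH}(b)|=|\varepsilon_{\hh\GG}(\hh\pi(b))|\le\|\rho_\U(\hh\pi(b))\|$ for all $b\in\C_0^u(\hh\HH)$. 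Hence $\U^\HH$ is a mixing representation of $\HH$ weakly containing the trivial representation, and $\HH$ has the Haagerup property.

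The only genuinely delicate point is the preservation of mixing-ness under this transport: pushing $\U$ through $\pi$ naturally produces a representation of $\HH$ at the universal level, and in \cite[Proposition 5.8]{DFSW} coamenability of $\GG$ was used precisely to pass the mixing property down to the reduced level. \cite[Remark 5.9]{DFSW} isolates exactly this obstruction, and Lemma \ref{mix}---the universal lift of a mixing representation is again mixing---is what makes the descent work for arbitrary $\GG$. The remaining ingredients (naturality of the dual homomorphism, the behaviour of the counit under it, and preservation of weak containment) are formal and require neither regularity nor coamenability.
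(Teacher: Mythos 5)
Your proposal is correct and is essentially the paper's own argument: the paper simply cites Lemma \ref{mix} together with \cite[Remark 5.9]{DFSW}, which amounts exactly to rerunning the proof of \cite[Proposition 5.8]{DFSW} with the universal mixing property supplied by Lemma \ref{mix} instead of coamenability, as you do. Your write-up just makes explicit the transport of the representation through $\pi$ and $\hh\pi$ and of weak containment via $\varepsilon_{\hh\GG}\circ\hh\pi=\varepsilon_{\hh\HH}$, which the paper delegates to the cited remark.
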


\subsection*{Acknowledgements} We thank A. Skalski for useful discussions on the subject of this paper and the anonymous referee  for  a  careful  reading  of  our  manuscript. 
PK was partially supported by the NCN (National Center of Science) grant
 2015/17/B/ST1/00085. FK was partially supported by the Ministry of Science of Iran.


\begin{thebibliography}{66}

\bibitem{BBS} S.~ Baaj, E.~Blanchard, and G.~Skandalis: Unitaires multiplicatifs en dimension  nie et
leurs sous-objets. \emph{Ann.~Inst.~Fourier}, \textbf{49} No. 4 (1999), 1305--1344.

\bibitem{BS} S.~Baaj, and G.~Skandalis: Unitaires multiplicatifs et dualit\'e pour les produits crois\'es de $\mathrm{C}^*$-alg\`ebres. \emph{Ann.~Scient.~\'Ec.~Norm.~Sup.}, $4^{\text{\tiny e}}$ s\'erie, t. \textbf{26} (1993), 425--488.

\bibitem{BSV} S.~Baaj, G.~Skandalis, and S.~Vaes: Non-semi-regular quantum groups coming from number theory. \emph{Communications in Mathematical Physics}, \textbf{235} (2003), 139--167.

\bibitem{BT} E.~B\'{e}dos, and L. Tuset: Amenability and co-amenability for locally compact quantum groups. \emph{Int.~J.~Math.}, \textbf{14} No. 8 (2003), 865--884.

\bibitem{BV} M.E.B.~Bekka, and A.~Vallete : Lattices in semi-simple Lie groups, and multipliers of group $\C^*$-algebras. \emph{Recent advances in operator algebras (Orl{\'e}ans, 1992). Ast{\'e}risque} \textbf{232}  (1995), 67--79.

\bibitem{DFSW} M.~Daws, P.~Fima, A.~Skalski, and S.~White: The Haagerup property for locally compact quantum groups. \emph{J.~reine angew.~Math.} \textbf{2016} (2014), 189--229.

\bibitem{DKSS} M.~Daws, P.~Kasprzak, A.~Skalski, and P.M.~So{\l}tan: Closed quantum subgroups of locally compact quantum groups. \emph{Adv.~Math.} \textbf{231} (2012), 3473--3501.

\bibitem{FS} U.~Franz, and A.~Skalski : A new characterization of idempotent states on finite and compact quantum groups. \emph{Comptes~Rendus~Mathematique}, \textbf{347} No.17--18 (2009), 991--996. 

\bibitem{Izumi} M.~Izumi: Non-commutative Poisson Boundaries and Compact Quantum Group Actions. \emph{Adv.~Math.} \textbf{169} (2002),  1--57.

\bibitem{KKS} M.~Kalantar, P.~Kasprzak, and A.~Skalski: Open quantum subgroups of locally compact quantum groups.  \emph{Adv.~Math.} \textbf{303} (2016),   322--359.

\bibitem{KKSS} M.~Kalantar, P.~Kasprzak, A.~Skalski and P.M.~So{\l}tan : Induction for locally compact quantum groups
revisited, work in progress (2016).

\bibitem{int} P.~Kasprzak, F.~Khosravi, and P.M.~So{\l}tan: Integrable actions and quantum subgroups, Preprint \texttt{arXiv:1603.06084 [math.OA]}.

\bibitem{embed} P.~Kasprzak, and P.M.~So{\l}tan: Embeddable quantum homogeneous spaces. \emph{J.~Math.~Anal.~Appl.} \textbf{411} (2014), 574-591.

\bibitem{univ} J.~Kustermans: Locally compact quantum groups in the universal setting. \emph{Int.~J.~Math.} \textbf{12} (2001) 289--338.

\bibitem{KV} J.~Kustermans, and S.~Vaes: Locally compact quantum groups. \emph{Ann.~Scient.~\'{E}c.~Norm.~Sup.} $4^{\text{\tiny e}}$ s\'{e}rie, t.~\textbf{33} (2000), 837--934.

\bibitem{KVvN} J.~Kustermans, and S.~Vaes: Locally compact quantum groups in the von Neumann algebraic setting. \emph{Math.~Scand.} \textbf{92} (2003), 68--92.

\bibitem{MNW} T.~Masuda, Y.~Nakagami, and S.L.~Woronowicz: A $\mathrm{C}^*$-algebraic framework for the quantum groups. \emph{Int.~J.~Math.} \textbf{14} (2003), 903--1001.

\bibitem{MRW} R.~Meyer, S.~Roy, and S.~L.~Woronowicz: Homomorphisms of quantum groups. \emph{M\"unster J.~Math.} \textbf{5} (2012), 1--24.

\bibitem{Sal} P.~Salmi : Compact quantum subgroups and left invariant $\C^*$-subalgebras of locally compact quantum groups. \emph{J.~Funct.~Anal.} \textbf{261} (2011), 1--24.  

\bibitem{Salmi_Survey} P.~Salmi :  Idempotent states on locally compact groups and quantum groups \emph{Algebraic Methods in
Functional Analysis, The Victor Shulman Anniversary Volume} Operator Theory: Advances and
Applications, Vol. 233, I.G. Todorov and L. Turowska (Eds.), pp. 155-170, Birkh{\"a}user/Springer,
Basel, 2014.

\bibitem{SaS} P.~Salmi, and A.~Skalski : Idempotent states on locally compact quantum groups II. Preprint \texttt{arXiv:1605.07338  [math.OA]}.

\bibitem{Sol_Pod} P.M.~So{\l}tan : Examples of non-compact quantum group actions.  \emph{J.~Math.~Anal.~Appl.} \textbf{372} (2010), 224--236.

\bibitem{mu2} P.M.~So{\l}tan, and S.L.~Woronowicz: From multiplicative unitaries to quantum groups II. \emph{J.~Funct.~Anal.} \textbf{252} (2007), 42--67.

\bibitem{Tak} M.~Takesaki : \emph{Theory of operator algebras II}, Encyclopaedia Math. Sci. 125, SpringerVerlag,  Berlin 2003.

\bibitem{Vaes} S.~Vaes: A new approach to induction and imprimitivity results. \emph{J.~Funct.~Anal.} \textbf{229} (2005), 317--374.


\bibitem{VaesCan} S.~Vaes: The unitary implementation of a locally compact quantum group action. \emph{J.~Funct.~Anal.} \textbf{180} (2001), 426--480.

\bibitem{mu} S.L.~Woronowicz: From multiplicative unitaries to quantum groups. \emph{Int.~J.~Math.} \textbf{7} (1996), 127--149.
\end{thebibliography}
\end{document}